\newtheorem{theorem}{Theorem}[section]
\newtheorem{corollary}[theorem]{Corollary}%[section]
\newtheorem{lemma}[theorem]{Lemma}%[section]
\theoremstyle{definition}
\declaretheoremstyle[
  headfont=\normalfont\bfseries,
  sharenumber = theorem,
  bodyfont=\normalfont,
%  spaceabove=1em plus 0.75em minus 0.25em,
%  spacebelow=1em plus 0.75em minus 0.25em,
  qed={$\boxtimes$},%\qedsymbol,
]{examplestyle2}
\declaretheorem[
  style=examplestyle2,
  title=Example,
  refname={example,examples},
  Refname={Example,Examples}
]{example}
\declaretheorem[
  style=examplestyle2,
  title=Remark,
  refname={remark, remarks},
  Refname={Remark, Remarks}
]{remark}
\numberwithin{equation}{section}
\newcommand{\qp}{Q}
\newcommand{\jrd}{\times}
\newcommand{\ctimes}{\bar{\jrd}}
\newcommand{\mdual}{\mathcal{T}}
\newcommand{\pair}{\varrho}
\newcommand{\prepair}{\pi}
\newcommand{\X}{\mathbb{X}}
\newcommand{\sC}{\mathscr{C}}
\newcommand{\cliff}{Cl}
\newcommand{\jstar}{\star}
\newcommand{\jmult}{\circledcirc}
\newcommand{\sprod}{\odot}
\newcommand{\tprod}{\diamond}
\newcommand{\gprod}{\square}
\newcommand{\kwedge}{\owedge}
\newcommand{\dwedge}{\wedge}
\newcommand{\join}{\wedge}
\newcommand{\dbw}{D}
\newcommand{\bw}{\Phi}
\newcommand{\stwoproj}{\mathscr{M}}
\newcommand{\stwoprojdual}{\mathscr{S}}
\newcommand{\oc}{\mathscr{OC}}
\newcommand{\ext}{\mathchoice{{\textstyle\bigwedge}}%
    {{\bigwedge}}%
    {{\textstyle\dwedge}}%
    {{\scriptstyle\dwedge}}}
\newcommand{\tf}{\operatorname{\mathsf{tf}}}
\newcommand{\sR}{\mathscr{R}}
\newcommand{\stu}{\mathbb{U}}
\newcommand{\stwd}{\stw^{\ast}}
\newcommand{\opmult}{\ast}
\newcommand{\opmults}{\ast_{S}}
\newcommand{\opmulta}{\ast_{A}}
\newcommand{\rictr}{\operatorname{\rho}}
\newcommand{\rictrz}{\rictr_{\circ}}
\newcommand{\scal}{\operatorname{s}}
\newcommand{\sB}{\mathscr{B}}
\newcommand{\sX}{\mathscr{X}}
\newcommand{\sY}{\mathscr{Y}}
\newcommand{\sS}{\mathscr{S}}
\newcommand{\sK}{\mathscr{K}}
\newcommand{\sE}{\mathscr{E}}
\newcommand{\sH}{\mathscr{H}}
\newcommand{\sZ}{\mathscr{Z}}
\newcommand{\mcurv}{\mathscr{MC}}
\newcommand{\oproj}{\mathscr{P}}
\newcommand{\mcurvsym}{\overline{\mathscr{MC}}}
\newcommand{\mcurvkahler}{\mathscr{MC}_{\mathscr{K}}}
\newcommand{\mcurvweyl}{\mathscr{MC}_{\mathscr{W}}}
\newcommand{\projric}{\mathscr{P}_{\mathscr{R}}}
\newcommand{\projscal}{\mathscr{P}_{\mathscr{S}}}
\newcommand{\mcurvric}{\mathscr{MC}_{\mathscr{R}}}
\newcommand{\mcurvscal}{\mathscr{MC}_{\mathscr{S}}}
\newcommand{\mcurvkahlerweyl}{\mathscr{MC}_{\mathscr{K},\mathscr{W}}}
\newcommand{\mcurvweylpm}{\mathscr{MC}^{\pm}_{\mathscr{W}}}
\newcommand{\mcurvweylp}{\mathscr{MC}^{+}_{\mathscr{W}}}
\newcommand{\mcurvweylm}{\mathscr{MC}^{-}_{\mathscr{W}}}
\newcommand{\Id}{\operatorname{Id}}
\newcommand{\stw}{\mathbb{W}}
\newcommand{\std}{\ste^{\ast}}
\newcommand{\sdf}{\ext^{2}_{+}}
\newcommand{\asdf}{\ext^{2}_{-}}
\newcommand{\pmdf}{\ext^{2}_{\pm}}
\newcommand{\mpdf}{\ext^{2}_{\mp}}
\newcommand{\idem}{\mathbb{Idem}}
\newcommand{\sherm}{\operatorname{Sym}}
\newcommand{\sa}{\operatorname{Sym}}
\newcommand{\dalg}{\mathbb{D}}
\newcommand{\id}{\operatorname{Id}}
\newcommand{\ealg}{\mathbb{E}}
\newcommand{\ideal}{\mathbb{I}}
\newcommand{\jdeal}{\mathbb{J}}
\newcommand{\fie}{\mathbb{k}}
\newcommand{\imt}{\iota}
\newcommand{\so}{\mathfrak{so}}
\newcommand{\alg}{\mathbb{A}}
\newcommand{\balg}{\mathbb{B}}
\newcommand{\om}{\omega}
\newcommand{\mlt}{\circ}
\newcommand{\ka}{\kappa}
\newcommand{\dum}{\,\cdot\,\,}
\newcommand{\lap}{\Delta}
\renewcommand{\j}{\mathsf{i}}
\newcommand{\la}{\lambda}
\newcommand{\ep}{\epsilon}
\newcommand{\reat}{\mathbb{R}^{\times}}
\newcommand{\eno}{\operatorname{End}}
\newcommand{\si}{\sigma}
\newcommand{\integer}{\mathbb{Z}}
\def\op#1{\widehat{#1}}
\newcommand{\lb}{\langle}
\newcommand{\ra}{\rangle}
\newcommand{\ste}{\mathbb{V}}
\newcommand{\A}{\mathcal{A}}
\newcommand{\al}{\alpha}
\newcommand{\be}{\beta}
\newcommand{\ga}{\gamma}
\newcommand{\spn}{\text{Span}\,}
\newcommand{\proj}{\mathbb{P}}
\newcommand{\sym}{\mathsf{Sym}\,}
\newcommand{\asym}{\mathsf{Skew}\,}
\DeclareMathOperator{\Aut}{Aut}
\newcommand{\ad}{\operatorname{ad}}
\newcommand{\tensor}{\otimes}
\newcommand{\rea}{\mathbb R}
\newcommand{\tr}{\operatorname{\mathsf{tr}}}
\let\oldtocsection=\tocsection
\let\oldtocsubsection=\tocsubsection
\renewcommand{\tocsection}[2]{\hspace{0em}\oldtocsection{#1}{#2}}
\renewcommand{\tocsubsection}[2]{\hspace{1em}\oldtocsubsection{#1}{#2}} 
\begin{document}
\title[Algebra of curvature tensors]{The commutative nonassociative algebra of metric curvature tensors}
\author{Daniel J.~F. Fox} 
\address{Departamento de Matemática Aplicada a la Ingeniería Industrial\\ Escuela Técnica Superior de Ingeniería y Diseño Industrial\\ Universidad Politécnica de Madrid\\Ronda de Valencia 3\\ 28012 Madrid España}
\email{daniel.fox@upm.es}
%\date{\today}
%\keywords{nonassociative algebras; curvature tensors; Jordan algebras}

\begin{abstract}
The space of tensors of metric curvature type on a Euclidean vector space carries a two-parameter family of orthogonally invariant commutative nonassociative multiplications invariant with respect to the symmetric bilinear form determined by the metric. For a particular choice of parameters these algebras recover the polarization of the quadratic map on metric curvature tensors that arises in the work of Hamilton on the Ricci flow. Here these algebras are studied as interesting examples of metrized commutative algebras and in low dimensions they are described concretely in terms of nonstandard commutative multiplications on self-adjoint endomorphisms.

The algebra of curvature tensors on a three-dimensional Euclidean vector space is shown isomorphic to an orthogonally invariant deformation of the standard Jordan product on three by three symmetric matrices. This algebra is characterized up to isomorphism in terms of purely algebraic properties of its idempotents and the spectra of their multiplication operators.

On a vector space of dimension at least four, the subspace of Weyl (Ricci-flat) curvature tensors is a subalgebra for which the multiplication endomorphisms are trace-free and the Killing type trace-form is a multiple of the nondegenerate invariant metric. This subalgebra is simple when the Euclidean vector space has dimension greater than four. In the presence of a compatible complex structure, the analogous result is obtained for the subalgebra of Kähler Weyl curvature tensors. It is shown that the anti-self-dual Weyl tensors on a four-dimensional vector space form a simple five-dimensional ideal isometrically isomorphic to the trace-free part of the Jordan product on trace-free $3 \times 3$ symmetric matrices.
\end{abstract}

\maketitle
%%%%%%% Need next line so hyperref does not display subsections in margin
%%%%%%% Table of contents depth specification
\setcounter{tocdepth}{1} 
\begin{footnotesize}
\tableofcontents
\end{footnotesize}

\section{Introduction}
Let $\ste$ be an $n$-dimensional real vector space equipped with a metric $h_{ij}$. Let $\mcurv(\std)$ 
\begin{align}
\mcurv(\std) = \{\sY_{ijkl} \in \tensor^{4}\std: \sY_{[ij]kl} = \sY_{ijkl} = \sY_{ij[kl]}, \sY_{[ijk]l} = 0\}
\end{align}
be the $n^{2}(n^{2} - 1)/12$-dimensional vector space of \emph{metric curvature tensors}.
Any $\sY_{ijkl} \in \mcurv(\std)$ satisfies $\sY_{klij} = \sY_{ijkl}$ and $\sY_{i(jk)l}$ is symmetric in $i$ and $l$. 
The metric curvature tensors of \emph{n type} $\mcurvweyl(\std)$ comprise the kernel of the \emph{Ricci trace} $\rictr:\mcurv(\std) \to S^{2}(\std)$ defined by $\rictr(\sY)_{ij} = \sY_{pij}\,^{p}$. 
Note that $\rictr(\sY)_{ij}$ is symmetric because $2\rictr(\sY)_{[ij]} = -\sY_{ijkl}h^{kl} = 0$. The trace $\scal(\sY) = \tr \rictr(\sY) = h^{ij}\rictr(\sY)_{ij}$ is the \emph{scalar curvature} of $\sY$. (Here, and when convenient, there are used the abstract index conventions \cite[chapter $2$]{Penrose-Rindler}.) These definitions are consistent with the conventions in which the curvature tensor of the round metric $g_{ij}$ on the sphere has the form $-2g_{k[i}g_{j]l}$ (see Remark \ref{signremark} for detailed discussion of signs).

By \cite[Theorem $7.1$]{Hamilton} the curvature tensor $\sR_{ijkl}$ of a family of metrics $g(t)_{ij}$ solving the Ricci flow $\tfrac{d}{dt}g(t)_{ij} = -2\rictr(\sR(t))_{ij}$ evolves according to
\begin{align}\label{ricciflowcurv}
\tfrac{d}{dt}\sR_{ijkl} = \lap\sR_{ijkl} + 2(\sR\opmult \sR)_{ijkl} + 2\sR_{p[i}\sR_{j]}\,^{p}\,_{kl} + 2\sR_{p[k}\sR_{l]}\,^{p}\,_{ij},
\end{align}
where $\sR\opmult \sR$ is some quadratic form on $\mcurv(\std)$. The polarization of the quadratic form appearing in \eqref{ricciflowcurv} can be viewed as a commutative multiplication $\opmult$ on $\mcurv(\std)$. Here, $(\mcurv(\std), \opmult)$ is studied as an interesting example in the general context of commutative nonassociative algebras that exhibits some special structural properties. Although they did not use explicitly this algebraic perspective, it was R. Hamilton  \cite{Hamilton, Hamilton-four, Hamilton-formation} and G. Huisken \cite{Huisken} who first emphasized the importance of $\opmult$ and discovered its basic properties.

The class of commutative not necessarily associative algebras with no additional structure is too general to admit a good theory. In many interesting examples the commutative algebra $(\alg, \mlt)$ satisfies the further condition that it is \emph{metrized} meaning it is equipped with a nondegenerate bilinear form $h$ that is \emph{invariant} in the sense that the cubic form $h(x\mlt y, z)$ is completely symmetric in $x, y, z \in \alg$ (in this case $h$ is also often called a \emph{Frobenius form}). For various perspectives on metrized commutative algebras see \cite{Bordemann, Fox-simplicial, Griess-Monster, Griess-gnavoai, Hall-transpositionalgebras, Hall-Rehren-Shpectorov, Ivanov, Nadirashvili-Tkachev-Vladuts, Tkachev-correction}. 

The definition of $\opmult$ and its basic properties are described in Section \ref{curvmultsection}, and are based on Theorem \ref{opalgebratheorem}, which yields two different new constructions of $\opmult$.

By Lemma \ref{curvoplemma}, for $k \geq 2$ there is an $O(n)$-equivariant linear map $\sX \in \mcurv(\std) \to \op{\sX} \in \eno(\tensor^{k}\std)$ such that $\op{\sX}$ is self-adjoint and preserves the type (by symmetries) of tensors. If $\op{\sX}$ preserves the $O(n)$-submodule $\stw \subset \tensor^{k}\std$, it restriction to $\stw$ is written $\op{\sX}_{\stw}$. If $\sX \to \op{\sX}_{\stw}$ is injective, the pullback of the projection onto the image of $\op{\dum}_{\stw}$ of the Jordan product $\op{\sX}_{\stw}\jmult \op{\sY}_{\stw}$ yields a commutative multiplication on $\mcurv(\std)$ on which $O(n)$ acts by automorphisms. 

For example, $\op{\sX}$ preserves $\ext^{2}\std$ and $S^{2}\std$ and the induced maps $\op{\dum}_{\ext^{2}\std}$ and $\op{\dum}_{S^{2}\std}$ are injective by Corollary \ref{traceextcorollary}. By Lemma \ref{stalgebralemma}, the linear combinations of the pullbacks of the projections onto their images of the Jordan products of endomorphisms, $\op{\sX}_{\ext^{2}\std}\jmult\op{\sY}_{\ext^{2}\std}$ and $\op{\sX}_{S^{2}\std}\jmult\op{\sY}_{S^{2}\std}$, yield a two-parameter family $s\opmulta + t\opmults$ of commutative multiplications on $\mcurv(\std)$ that are metrized by the metric $\lb \sX, \sY \ra = \sX_{ijkl}\sY^{ijkl}$ on $\mcurv(\std)$ and on which $O(n)$ acts isometrically by algebra automorphisms. Moreover, a specific linear combination recovers $\opmult$ as follows. As $\sX\in \mcurv(\std)$ determines an endomorphism $\op{\sX}_{\mcurv(\std)}$ of $\mcurv(\std)$, it makes sense to define a multiplication on $\mcurv(\std)$ by $\sX \opmult \sY = \op{\sX}_{\mcurv(\std)}(\sY)$. It turns out that the multiplication $\opmult$ so defined is commutative, for Theorem \ref{opalgebratheorem} shows that $\opmult = \tfrac{3}{2}(\opmulta + \opmults)$, and that it recovers the multiplication of \eqref{ricciflowcurv}, for it shows that $\opmult$ has the explicit form \eqref{curvmultdefined} found by Hamilton. 
Because $\sX \opmult \sY = \op{\sX}(\sY)$, an immediate consequence of the self-adjointness of $\op{\sX}_{\mcurv(\std)}$ with respect to $\lb \dum, \dum \ra$ is that $(\mcurv(\std), \opmult)$ is metrized by $\lb \dum, \dum \ra$, a fact due to Huisken.

\begin{remark}\label{pullbackremark}
For any metrized commutative algebra $(\alg, \mlt, h)$ and any $t \in \reat$, $t\Id_{\alg} \in \eno(\alg)$ is an isometric algebra isomorphism from $(\alg, \mlt_{t}, t^{2}h)$ to $(\alg, \mlt, h)$ where $x\mlt_{t} y = t x \mlt y$. For this reason, the family $s\opmulta + t\opmults$ should be regarded as associated with $[s:t] \in \proj^{1}(\rea)$, and it is any one of the multiplications corresponding with $[1:1] \in \proj^{1}(\rea)$ that arises in the Ricci flow, the choice of which amounting to a normalization that is inconsequential from a purely algebraic perspective. However, considerations related to geometric applications motivate a particular choice. Concretely, the choice of $\opmult = \opmult_{1}$ over $\opmult_{-1}$ is made by requiring that a \emph{positive} multiple of the curvature tensor of the round sphere be idempotent. See Remark \ref{signremark} for further discussion.
\end{remark}

\begin{remark}
Some authors \cite{Bohm-Wilking, Richard-curvaturecones, Richard-Seshadri} define $\opmult$ directly in terms of curvature operators on $\ext^{2}\std$. Here $\opmult$ is defined on curvature tensors, and the two definitions involve curvature operators on $S^{2}\std$ and $\mcurv(\std)$ itself. Although there seems to be no good notion of representation of a commutative nonassociative algebra (at least not without embedding it in a vertex operator algebra), it is convenient to think of curvature operators on different tensor modules such as $S^{2}\std$ and $\ext^{2}\std$ as different representations of $(\mcurv(\std), \opmult)$, Theorem \ref{opalgebratheorem} showing that the multiplication itself is determined by $\mcurv(\std)$ somehow viewed as a module over itself.
\end{remark}

The irreducible submodules of $\mcurv(\std)$ under the action of certain groups of orthogonal transformations are subalgebras.
Lemma \ref{opweylsubalgebralemma} shows that the space $\mcurvweyl(\std)$ of metric curvature tensors of Weyl type is a subalgebra of $(\mcurv(\std), \opmult)$.
If $(\ste, h)$ carries an almost complex structure compatible with $h$ it makes sense to speak of the submodule of Kähler curvature tensors (see Section \ref{kahlersection} for the definition), $\mcurvkahler(\std)$ and its submodule of Kähler Weyl curvature tensor $\mcurvkahlerweyl(\std) = \mcurvkahler(\std) \cap \mcurvweyl(\std)$, and Lemma \ref{kahlersubalgebralemma} shows that $\mcurvkahler(\std)$ and $\mcurvkahlerweyl(\std)$ are subalgebras of $\mcurv(\std)$.
The one-dimensional submodule of $\mcurv(\std)$ generated by the metric is also a subalgebra (isomorphic to the real field), but the irreducible submodule generated by the Kulkarni-Nomizu products of the metric with trace-free symmetric two-tensors (the submodule comprising curvature tensors of pure trace-free Ricci type) is not a subalgebra. The fusion rules (in the sense of \cite{Hall-Rehren-Shpectorov}) describing the interactions of the irreducible summands of $\mcurv(\std)$ are given in Table \ref{fusionrule}. They follow from Theorem \ref{fusiontheorem}, which gives more information than do the fusion rules alone because it asserts the equalities of products of subspaces, rather than simply containment relations. The proofs of these relations are based on detailed calculations of products in $(\mcurv(\std), \opmult)$, given in Section \ref{productssection}, that, while technical, should be useful in further study of $\opmult$. The ingredients of the proof of Theorem \ref{fusiontheorem} also yield a conceptually simple proof of the Böhm-Wilking theorem (see Section \ref{fusionsection}) used in the construction of curvature cones. The fusion rules for the unitary irreducible subspaces of the subalgebra $\mcurvkahler(\std)$ and the corresponding analogue of the Böhm-Wilking theorem are described in the companion paper \cite{Fox-curvtensorkahler}.

A commutative algebra $(\alg, \mlt)$ is \emph{exact} (called \emph{harmonic} in \cite{Nadirashvili-Tkachev-Vladuts}) if its multiplication endomorphisms $L_{\mlt}:\alg \to \eno(\alg)$ defined by $L_{\mlt}(x) = x\mlt y$ satisfy $\tr L_{\mlt}(x) = 0$ for all $x \in \alg$. Note that an exact algebra is nonunital.
A commutative algebra $(\alg, \mlt)$ is \emph{Killing metrized}, if the Killing type trace form $\tau_{\mlt}(x, y) = \tr L_{\mlt}(x)L_{\mlt}(y)$ is nondegenerate and invariant. The multiplication of a Killing metrized commutative algebra is necessarily faithful, meaning that $L_{\mlt}$ is injective. 

Important structural features of the subalgebra $(\mcurvweyl(\std), \opmult)$ shown in Theorem \ref{einsteintheorem}, are that it is exact and Killing metrized, and is simple when $\dim \std > 4$. 

\begin{theorem}\label{einsteintheorem}
Let $(\ste, h)$ be a Euclidean vector space of dimension at least $4$. The algebra $(\mcurvweyl(\std), \opmult)$ is exact and Killing metrized. Moreover:
\begin{enumerate}
\item The Killing form $\tau_{\opmult}(\sX, \sY) = \tr L_{\opmult}(\sX)L_{\opmult}(\sY)$ is a nonzero multiple of $\lb\dum, \dum \ra$. 
\item If $\dim \ste > 4$, then $(\mcurvweyl(\std), \opmult)$ is simple.
\end{enumerate}
\end{theorem}

\begin{proof}[Proof of Theorem \ref{einsteintheorem}]
Let $(\ste, h)$ be a Euclidean vector space with $\dim \ste  = n \geq 4$. The group $O(n) = O(\ste, h)$ acts on $\mcurvweyl(\std)$ isometrically and irreducibly. By Theorem \ref{opalgebratheorem}, $(\mcurvweyl(\std), \opmult)$ is metrized by the pairing $\lb \dum, \dum \ra$ and $O(n)$ acts on $(\mcurvweyl(\std), \opmult)$ by algebra automorphisms.
By Lemma \ref{weylidempotentslemma} there is a nontrivial idempotent $\sE \in (\mcurvweyl(\std), \opmult)$.
Because $(\mcurvweyl(\std), \opmult)$ contains a nontrivial idempotent, its multiplication is nontrivial. Theorem \ref{preeinsteintheorem} implies $\tr L_{\opmult}(\sX) = 0$, and $\tau_{\opmult}(\sX, \sY) = \tr L_{\opmult}(\sX)L_{\opmult}(\sY)$ equals $\ka \lb \dum, \dum \ra$ for some nonzero $\ka$ which must be positive because both $\tau_{\opmult}$ and $\lb\dum, \dum \ra$ are positive definite.
If $\dim \ste > 4$, the action by automorphisms of the connected simple Lie group $SO(n)$ on $\mcurvweyl(\std)$ is irreducible, so Theorem \ref{simpletheorem} implies $(\mcurvweyl(\std), \opmult)$ is simple. 
\end{proof}

When $\dim \ste = 4$, a choice of orientation determines an orthogonal decomposition $\mcurvweyl(\std) = \mcurvweylp(\std) \oplus \mcurvweylm(\std)$ where $\mcurvweylpm(\std)$ are the subspaces of self-dual and anti-self-dual curvature tensors. Theorem \ref{mcurvweyltheorem}, discussed in more detail later in the introduction, shows that these are mutually isomorphic subalgebras that are simple, exact, and Killing metrized with Killing form equal to $\tfrac{21}{16}\lb\dum, \dum \ra$. 
Alternatively this is a consequence of Theorem \ref{einsteinkahlertheorem}, which is the analogue of Theorem \ref{einsteintheorem} for the algebra of Kähler-Weyl tensors. It shows that if $(\ste, h, J)$ is a $2n$-dimensional Kähler vector space, then $(\mcurvkahlerweyl(\std), \opmult)$ is a simple, exact, Killing metrized algebra with Killing form a positive multiple of $\lb\dum, \dum \ra$. When $\dim \ste = 4$, a choice of compatible almost complex structure $J$ determines an orientation of $\ste$ and Lemma \ref{kahlerweylasdlemma} shows $\mcurvweylm(\std) = \mcurvkahlerweyl(\std)$

\begin{remark}
When $\dim \ste > 4$, Theorem \ref{einsteintheorem} does not give the value of the positive constant $\ka$ such that $\tau_{\opmult} = \ka \lb \dum, \dum \ra$. To calculate $\ka$ it would suffice to calculate the eigenvalues on $\mcurvweyl(\std)$ of the operator $\op{\sE}$ associated with a nonzero idempotent, as this suffices to calculate its $\tau_{\opmult}$-norm. When $\dim \ste = 4$, the explicit calculations used to prove Theorem \ref{mcurvweyltheorem} make it possible to calculate $\ka = 21/16$ for $(\mcurvweyl^{\pm}(\std), \opmult)$ (and so also for $(\mcurvkahlerweyl(\std), \opmult)$).
\end{remark}

A basic problem is to describe $(\mcurv(\std), \opmult)$, or its subalgebras more explicitly, in terms of known algebras. As mentioned already, when $\dim \ste = 2$, the one-dimensional algebra $(\mcurv(\std), \opmult)$ is isometrically isomorphic to the field of real numbers with its Euclidean inner product. When $\dim \ste$ is $3$ or $4$ explicit results are obtained relating $\opmult$ to the usual Jordan product of symmetric endomorphisms.

V.~L. Popov's \cite{Popov} discusses invariants of algebras constructed from traces of products of powers of their multiplication operators, addressing questions such as when does a module for a group $G$ admit a nontrivial $G$-invariant multiplication that is simple or have automorphism group equal to $G$. Specific instances of this last question are addressed in \cite{Dixmier-algebres}, for $G = SL(2)$, and \cite{Elashvili}, for certain exceptional Lie groups. 
In this context, metrizability by some particular trace-form, for example Killing metrizability, appears as a structurally important condition. Its importance has been explicitly indicated in work of A. Ryba, for example \cite{Ryba}, constructing commutative nonassociative algebras on which certain finite simple groups act by automorphisms (see in particular \cite[Lemma $9.1$]{Ryba} and see also \cite{Ivanov}), and in the work of V.~G. Tkachev and collaborators dedicated to a general program, detailed in \cite{Nadirashvili-Tkachev-Vladuts}, of constructing homogeneous solutions to certain geometrically motivated linear and fully nonlinear elliptic partial differential equations, for example those describing minimal cones, by studying the algebras associated with completely symmetric cubic forms.
An interesting class of examples of exact Killing metrized commutative nonassociative algebras, relevant here also for the statement of Theorem \ref{mcurvweyltheorem} below, are the deunitalizations of the finite-dimensional simple real Euclidean Jordan algebras. 

The vector space $\sherm(\stw, g)$ of $g$-self-adjoint endomorphisms of the $n$-dimensional Euclidean vector space $(\stw, g)$ equipped with the multiplication $\jmult$ that is the symmetric part of the ordinary composition of endomorphisms is an $n(n+1)/2$-dimensional simple real Euclidean Jordan algebra with unit. Its \emph{deunitalization} is the $(n+2)(n-1)/2$-dimensional commutative, nonassociative, nonunital algebra obtained by retraction along the unit. Precisely, this is the algebra $\sherm_{0}(\stw, g) = \{A \in \sherm(\stw, g): \tr A = 0\}$ of trace-free symmetric endomorphisms of $(\stw, g)$ equipped with the multiplication
\begin{align}\label{deunitalization}
A \jrd B = A \jmult B + B \circ A - \tfrac{1}{2n}\tr(A \circ B + B \circ A)\Id_{\stw},
\end{align}
and the invariant metric $G(A, B) = \tfrac{1}{n}\tr(A \jmult B) = \tfrac{1}{n}\tr(A\circ B)$. 
When $\dim \stw = 3$, $G(A \jrd A, A) = \tfrac{1}{3}\tr(A^{3}) = \det A$. (These claims follow from standard formulas as in \cite{Faraut-Koranyi}, and are demonstrated more or less explicitly in \cite{Fox-simplicial} and \cite[section $10$]{Tkachev-universality}.)

Section \ref{3dsection} treats the case $\dim \ste = 3$. In this case the $6$-dimensional algebra $(\mcurv(\std), \opmult)$ is linearly isomorphic to $S^{2}\std$. The map sending $\al \in S^{2}\std$ to $\al^{\sharp} \in \sherm(\ste, h)$ defined by $h(\al^{\sharp}(x), y) = \al(x, y)$ for $x, y \in \std$ is a linear isomorphism. Transported from $S^{2}\std$ to $\sherm(\ste, h)$ via $\sharp$, the product $\opmult$ can be expressed in terms of familiar operations on symmetric endomorphisms. Lemma \ref{3dlemma} and Theorem \ref{3dcharacterizationtheorem} describe the product on $\sherm(\ste, h)$ corresponding to $\opmult$ as
\begin{align}
A\tprod B = A\jmult B - \tfrac{1}{4}\left(\tr(A)B + \tr(B)A + \tr(AB)I- (\tr A)(\tr B)I\right).
\end{align}
In particular, this product is nonunital and it is not the Jordan product $\jmult$. Identify $S^{2}_{0}\std$ with $\sherm_{0}(\ste, h)$ and equip it with the trace-free Jordan product $\jrd$ defined in \eqref{deunitalization}. More precisely, Lemma \ref{3dlemma} shows that $\sherm_{0}(\ste, h) \oplus \rea$ equipped with the multiplication
\begin{align}
(A, r)\star (B, s) = \left(A \jrd B + \tfrac{1}{4}(rB + sA), rs + \tfrac{1}{12}\lb A, B \ra\right).
\end{align}
is isomorphic to $(\mcurv(\std), \opmult)$ via the linear map $(\al^{\sharp}, r) \in \sherm_{0}(\ste, h) \oplus \rea \to \al + rh \in S^{2}\std$. 

What is more interesting is Theorem \ref{3dcharacterizationtheorem} that characterizes $\tprod$ in terms in intrinsic algebraic terms (for Euclidean $h$). The situation can be summarized informally as that $(\mcurv(\std), \opmult)$ is the most symmetric $O(3)$-invariant metrized commutative algebra structure on $S^{2}\std$, in that the number of orbits of its idempotents is the smallest possible, two, and the spectra of their multiplication endomorphisms have the maximal redundancy. Up to isomorphism there is a one-parameter family of $O(3)$-invariant commutative algebra structures on $S^{2}\std$ each metrized by an $O(3)$-invariant inner product and each of which contains a rank one idempotent and contains no square-zero element. The additional condition that there be only two orbits of idempotents, one generated by a multiple of $h$, the other by a rank one idempotent, distinguishes two such algebras. One of them is Killing metrized and the other is $(S^{2}\std, \tprod)$. Alternatively, they are distinguished by the multiplicity of the eigenvalue $1/2$ of the multiplication endomorphism of a rank one idempotent, which is always at least $2$, as a consequence of $O(3)$-invariance, but is $3$ uniquely for $(S^{2}\std, \tprod)$. The proof yields as corollaries that $\tprod$ is simple and its automorphism group is exactly the image of $O(3)$ in its induced action on $S^{2}\std$. Corollary \ref{3dpropertiescorollary} summarizes precisely all that is proved.

When $\dim\ste = 4$, Lemma \ref{selfdualcurvmultlemma} shows that the $5$-dimensional subspaces $\mcurvweyl^{\pm}(\std)$ are orthogonal ideals of $(\mcurvweyl(\std), \opmult)$. Theorem \ref{mcurvweyltheorem} shows that each of these subalgebras is isometrically isomorphic to the deunitalization of the six-dimensional rank $3$ simple real Euclidean Jordan algebra of symmetric endomorphisms of a $3$-dimensional vector space. 
The linear maps assigning to $\sX \in \mcurvweyl(\std)$ endomorphisms $\op{\sX}_{\pmdf\std} \in \eno(\pmdf\std)$ of the spaces $\pmdf \std$ of self-dual and anti-self-dual $2$-forms induce $SO(4)$-module isomorphisms $\mcurvweyl^{\pm}(\std) \simeq \sherm_{0}(\pmdf \std, h)$ \cite[Section $1.127$]{Besse}. The content of Theorem \ref{mcurvweyltheorem} is that a suitable multiple of $\op{\sX}_{\pmdf\std}$ is an algebra isomorphism.

\begin{theorem}\label{mcurvweyltheorem}
Let $(\ste, h)$ be a $4$-dimensional oriented Euclidean vector space.
Consider the deunitalization $(\sherm_{0}(\pmdf \std, h), \jrd)$ of the $6$-dimensional rank $3$ simple real Euclidean Jordan algebra $(\sherm(\pmdf \std, h), \jmult)$ of symmetric endomorphisms of the $3$-dimensional space $\pmdf \std$, equipped with the product $\jrd$ equal to the traceless part of the usual Jordan product $\jmult$ of endomorphisms and the metric $G(A, B) = \tfrac{1}{3}\tr A \circ B$. 
\begin{enumerate}
\item The map $\Psi:(\mcurvweyl^{\pm}(\std), \opmult, \lb \dum, \dum \ra)\to (\sherm_{0}(\pmdf \std, h), \jrd, \tfrac{4}{3}G)$ defined by $\Psi(\sX) = 3\op{\sX}$ is an $SO(4)$-equivariant
isometric algebra isomorphism.
\item The Killing form $\tau_{\opmult}(\sX, \sY) = \tr L_{\opmult}(\sX)L_{\opmult}(\sY)$ on $(\mcurvweyl^{\pm}(\std), \opmult)$ satisfies $\tau_{\opmult} = \tfrac{21}{16}\lb \dum, \dum \ra$, where $\lb \dum, \dum \ra$ is the metric on $\mcurvweyl(\std)$ given by complete contraction with $h_{ij}$.
\item\label{simplenosz} $(\mcurvweyl^{\pm}(\std), \opmult, \lb \dum, \dum \ra)$ is simple and contains no nontrivial square-zero elements.
\end{enumerate}
\end{theorem}

Theorem \ref{mcurvweyltheorem} is proved twice, at the end of Section \ref{4dsection} and again in Section \ref{idempotentsection} (Section \ref{computationsection} sketches still another proof). The isomorphism is described both conceptually and explicitly. 
The explicit isomorphism is based on the construction of a convenient basis of $\mcurvweyl^{+}(\std)$ and the calculation of the multiplication table for its elements. See Lemma \ref{4dsubalgebralemma}.
The conceptual proof is based on a calculation relating the endomorphisms of $\ext^{2}\std$ given by $\op{\sX \opmult \sY}_{\ext^{2}\std}$ and $\op{\sX}_{\ext^{2}\std}\circ \op{\sY}_{\ext^{2}\std}$, where $\circ$ denotes composition of endomorphisms, that shows
\begin{align}\label{cmultpreiso0}
\begin{split}
\tfrac{1}{3}\op{\sX \opmult \sY}_{\ext^{2}\std}& = \op{\sX}_{\ext^{2}\std}\jmult \op{\sY}_{\ext^{2}\std}- \tfrac{1}{6}\tr(\op{\star \sX}_{\ext^{2}\std} \circ \op{ \sY}_{\ext^{2}\std}) \star - \tfrac{1}{6}\tr( \op{\sX}_{\ext^{2}\std} \circ \op{\sY}_{\ext^{2}\std}) \Id_{\ext^{2}\std},
\end{split}
\end{align}
in which $\circ$ is composition of endomorphisms and $\star$ denotes both the Hodge star operator on $\ext^{2}\std$ and the involution it induces on $\mcurvweyl(\std)$; see Lemma \ref{selfdualcurvmultlemma} for details. 

For $\ste$ of dimension greater than $4$, it would be interesting to obtain a formula like \eqref{cmultpreiso0} for the difference $\op{\sX \opmult \sY}_{\mcurvweyl(\std)}$ in terms of the Jordan product $\op{\sX}_{\mcurvweyl(\std)}\jmult \op{\sY}_{\mcurvweyl(\std)}$ and expressions like those on the right side of \eqref{cmultpreiso0}. 

Part of claim \eqref{simplenosz} of Theorem \ref{mcurvweyltheorem} depends strongly on the assumption of Euclidean signature. 
It shows that in Euclidean signature $(\mcurvweyl(\std), \opmult)$ contains no square-zero element if $\dim \ste = 4$, while Lemma \ref{indefiniteszlemma} shows that if $\dim \ste \geq 4$ and $h$ has indefinite signature, then $(\mcurvweyl(\std), \opmult)$ is spanned by square-zero elements (see also Example \ref{maxdegenexample}). It would be interesting to know if $(\mcurvweyl(\std), \opmult)$ contains a nonzero square-zero element when $h$ is Euclidean and $\dim \ste > 4$. Theorem \ref{faithfulmultiplicationtheorem} shows the weaker result that the multiplication $\opmult$ is faithful if $h$ is Euclidean and $\dim \ste \geq 4$; equivalently, $(\mcurv(\std), \opmult)$ contains no zero divisors. 

The simplicity of the algebras $(\mcurvweyl^{\pm}(\std), \opmult)$ and $(\mcurvkahlerweyl(\std), \opmult)$ could perhaps appear unremarkable in light of a result of Popov showing that, over an algebraically closed field $\fie$ of characteristic zero, a generic algebra is simple. Precisely, \cite[Theorem $4$]{Popov} shows that the set of structure tensors of simple algebras over $\fie$ is open and dense. However, the first Theorem $3$  of \cite{Popov}\footnote{Due to a typographical error, its Theorem $2$ is mislabeled as Theorem $3$.} shows that a generic (in the same sense) algebra has trivial automorphism group, whereas $(\mcurvweyl^{\pm}(\std), \opmult, \lb \dum, \dum \ra)$,  $(\mcurvweyl(\std), \opmult)$, and $(\mcurvkahlerweyl(\std), \opmult)$ have large automorphism groups that contain respectively the Lie groups $SO(4)$, $O(n)$, and $U(n)$, and so are atypical from this point of view. 
Nonetheless, \cite[Theorem $5$]{Popov} shows that if the automorphism group of a finite-dimensional algebra with nontrivial multiplication over $\fie$ contains a connected algebraic subgroup that acts irreducibly on the algebra, then the algebra is simple. Although the algebras considered here are defined over $\rea$, Popov's argument can be used essentially as written to show that $(\mcurvweyl(\std), \opmult)$ and $(\mcurvkahlerweyl(\std), \opmult)$ are simple when $\dim \std > 4$. A precise statement of a more general result is given here as Theorem \ref{simpletheorem} and Theorem \ref{einsteintheorem} records its application to $(\mcurvweyl(\std), \opmult)$.

Among metrized commutative algebras those that have large automorphism groups are somewhat exceptional. That a Lie group $G$ acts on a metrized commutative algbera by automorphisms has the consequence that the orbit of an idempotent is a $G$ homogeneous space.
It would be interesting to describe completely the $G$-orbits of idempotents in subalgebras of $(\mcurv(\std), \opmult)$. For $\dim \std = 3$, Corollary \ref{3dpropertiescorollary} gives such a description, while for $\dim \std = 4$, such a description can be deduced from Theorem \ref{mcurvweyltheorem} and the computations used to prove Theorem \ref{fusiontheorem}. In this direction, Lemma \ref{idempotentparameterizationlemma} shows that when $\dim \std = 2n \geq 4$, certain of the idempotents produced by Lemma \ref{weylidempotentslemma} constitute an orbit of $O(2n)$ acting in $\mcurvweyl(\std)$ identified with the space $SO(2n)/U(n)$ of orthogonal complex structures on $\ste$ inducing a given orientation on $\ste$.

Since all claims in the paper are pure linear algebra, they extend straightforwardly to sections of tensor bundles over smooth manifolds.
Although no application to Ricci flow is immediately available, it is reasonable to hope that the results obtained here will be useful for studying curvature conditions on manifolds. 
A different, Lie theoretic, point of view on the structure of the multiplication $\opmult$ has been used profitably in \cite{Bohm-Wilking, Wilking}. For background on the definition of $\opmult$ as in \eqref{curvmultdefined}, its properties, and its role in the study of the Ricci flow, see also \cite{Andrews-Hopper}. 
Some features of the algebra $(\mcurv(\std), \opmult)$ are used implicitly in the study of the Ricci flow \cite{Bohm-Wilking, Brendle-book, Brendle-surgeryisotropic, Hamilton, Hamilton-four, Huisken, Richard-curvaturecones, Richard-lowerbounds, Richard-Seshadri, Wilking}. The algebraic perspective makes some of the manipulations used in such studies appear more natural, and focuses attention on certain structural features, namely the invariance and nondegeneracy of the Killing type trace form and the identification of idempotent elements and the spectra of their left multiplication operators, that are not self-evidently relevant from the geometric perspective. 

It would be interesting to extend results obtained here, for example Theorem \ref{mcurvweyltheorem}, to pseudo-Euclidean real vector spaces and to vector spaces over general base fields. 

\section{Notation and conventions}
All vector spaces considered here are finite-dimensional over $\rea$.
The abstract index conventions in the sense of Penrose \cite[chapter $2$]{Penrose-Rindler} are used when convenient. Given a vector space $\ste$,  $\al_{i_{1}\dots i_{l}}^{j_{1}\dots j_{k}}$ indicates an element of $\tensor^{k}\ste \tensor \tensor^{l}\std$. The indices are labels indicating tensor valencies and symmetries, and do not refer to any choice of reference frame. Enclosure of indices in square brackets or parentheses indicates complete antisymmetrization or complete symmetrization over the enclosed indices; indices delimited by vertical bars are omitted from such (anti)symmetrizations. For example $2a_{ijk} = a_{[i|j|k} + a_{(i|j|k)}$ is the decomposition of $a_{ijk}$ into its parts antisymmetric and symmetric in the first and last index. The symmetric product $\al \sprod \be \in S^{k+l}\std$ of symmetric tensors $\al \in S^{k}\std$ and $\be \in S^{l}\std$ is defined by complete symmetrization, $(\al \sprod \be)_{i_{1}\dots i_{k+l}} = \al_{(i_{1}\dots i_{k}}\be_{i_{k+1}\dots i_{k+l})}$, whereas the wedge product $\al \dwedge \be$ of antisymmetric tensors $\al \in \ext^{k}\std$ and $\be \in \ext^{l}\std$ is defined as a multiple of the complete antisymmetrization of their tensor product, by $(\al \dwedge \be)_{i_{1}\dots i_{k+l}} = \binom{k+l}{k}\al_{[i_{1}\dots i_{k}}\be_{i_{k+1}\dots i_{k+l}]}$.

Indices are raised and lowered, respecting horizontal position, using a nondegenerate symmetric bilinear form $h_{ij}$ (called a \emph{metric}) and the inverse symmetric bivector $h^{ij}$ satisfying $h^{ip}h_{pj} = \delta_{j}\,^{i}$. The pair $(\ste, h)$ is called a \emph{metric vector space}. The metric $h$ is \emph{Euclidean} if it is positive definite and in this case $(\ste, h)$ is called a \emph{Euclidean vector space}. 
Throughout the paper the norms used on tensor modules are those given by complete contraction with the metric (and not those induced from the standard $O(h)$-representation). A subspace $\mathbb{M} \subset \tensor^{k}\std \tensor \tensor^{l}\ste$ is a metric vector space with the metric $\lb \dum, \dum \ra$ defined via complete contraction with $h_{ij}$ and $h^{ij}$ by $\lb \al, \be \ra = \al_{i_{1}\dots i_{k}}^{j_{1}\dots j_{l}}\be_{a_{1}\dots a_{k}}^{b_{1}\dots b_{l}}h^{i_{1}a_{1}}\dots h^{i_{k}a_{k}}h_{j_{1}b_{1}}\dots h_{j_{l}b_{l}}$. 

Let $(\ste, h)$ be an $n$-dimensional metric vector space. When Euclidean $h$ is fixed, the abstract orthogonal group $O(n)$ is identified with the orthogonal group $O(h)$ of linear automorphisms of $\ste$ preserving $h$. The action of $GL(\ste)$ on $\ste$ given by $(g\cdot x)^{i} = x^{p}g_{p}\,^{i}$ induces the cogredient action on $\std$ given by $(g\cdot \mu)_{i} = (g^{-1})_{i}\,^{p}\mu_{p}$ and these actions extend in the usual way to $\tensor^{k}\ste \tensor \tensor^{l}\std$. By definition, $g_{i}\,^{j} \in GL(\ste)$ is in $O(h)$ if and only if $g_{i}\,^{p}g_{jp} = h_{ij}$, or, what is the same $(g^{-1})_{i}\,^{j} = g^{j}\,_{i}$. This implies the action of $O(h)$ commutes with taking traces. For example, for $\sX \in \mcurv(\std)$ and $g \in O(h)$, $\rictr(g\cdot \sX) = g \cdot \rictr(\sX)$, so $\mcurvweyl(\std)$ is an $O(h)$-submodule of $\mcurv(\std)$.

The space $\eno(\ste)$ of linear endomorphisms of $\ste$ is regarded as an algebra with multiplication $\circ$ given by composition. 
The \emph{adjoint involution} $\si_{h}:(\eno(\ste), \circ)\to (\eno(\ste), \circ)$ of the metric $h$ on $\ste$ is the real linear antiautomorphism defined by $h(\si_{h}(\phi)x, y) = h(x, \phi(y))$ for all $x, y \in \ste$ and $\phi \in \eno(\ste)$. For a metrized vector space $(\ste, h)$, the subspace $\sherm(\ste, h) = \sa(\eno(\ste), \si_{h}) = \{\phi \in \eno(\ste): \si_{h}(\phi) = \phi\}$ of $h$-self-adjoint endormorphisms is a Jordan algebra with the product $\phi \jmult \psi = \tfrac{1}{2}(\phi \circ \psi + \psi \circ \phi)$ for $\phi, \psi \in \sa(\ste, h)$. 
(When $h$ is clear from context there is written simply $\sherm(\ste)$ for brevity.)
There holds $\al \circ \be = \tfrac{1}{2}[\al, \be] + \al \jmult \be$, where $\al \jmult \be = \tfrac{1}{2}(\al \circ \be + \be \circ \al)$.

Via metric duality, $\al_{ij} \in \tensor^{2}(\std)$ is identified with the endomorphism $x^{j} \to x^{i}\al_{i}\,^{j}$ of $\ste$, and the composition of the endomorphisms of $\ste$ determined by raising the second indices of $\al_{ij} , \be_{ij} \in \tensor^{2}\std$ is given by $(\al \circ \be)_{i}\,^{j} = \al_{p}\,^{j}\be_{i}\,^{p}$. These conventions are such that, for $x, y, z, w \in \std$,
\begin{align}\label{xycirczw}
&(x \tensor y)\circ (z \tensor w) = \lb w, x \ra z \tensor y.
\end{align}
The pullback to $\tensor^{2}\std$ of the Lie bracket of endomorphisms yields the Lie bracket $[\dum, \dum]:\tensor^{2}(\std) \times \tensor^{2}(\std) \to \tensor^{2}(\std)$ given by $[\al, \be] = \al \circ \be - \be \circ \al$. Similarly, the multiplication induced via metric duality on $\tensor^{2}\std$ by the usual Jordan product of endomorphisms is denoted by $\jmult$.
The subspace $\ext^{2}\std$ is a subalgebra of $(\tensor^{2}\std, [\dum, \dum])$ and $(\ext^{2}\std, [\dum, \dum])$ is isomorphic to the Lie algebra $\mathfrak{so}(\ste, h)$. 

\section{General results about commutative algebras}
As explained in the introduction, the argument proving \cite[Theorem $5$]{Popov} adapts almost without change to the present setting to prove Theorem \ref{simpletheorem}.
For the reader's convenience, it is reproduced here with modifications appropriate to the current setting. 

\begin{theorem}\label{simpletheorem}
Let $(\alg, \mlt, h)$ be a nontrivial finite-dimensional commutative algebra. If a connected simple real Lie group $G$ acts on $(\alg, \mlt)$ irreducibly by automorphisms, then $(\alg, \mlt)$ is simple.
\end{theorem}

\begin{proof}
Since the action is irreducible, a nontrivial $G$-invariant ideal equals $\alg$. Assume $(\alg, \mlt)$ is not simple and let $\ideal \subset \alg$ be a minimal proper ideal. Then $g\cdot \ideal$ is again a minimal proper ideal for all $g \in G$. Since the sum of the ideals $g\cdot\ideal$ as $g$ ranges over $G$ is a nontrivial $G$-invariant ideal, it equals $\alg$. For $g, \bar{g} \in G$, the ideals $g\cdot\ideal$ and $\bar{g}\cdot\ideal$ either are equal or have intersection $\{0\}$, and it follows that there are $g_{1}, \dots, g_{r} \in G$ such that $\alg = \oplus_{i = 1}^{r}g_{i}\cdot \ideal$ is a direct sum of vector spaces. Since the product of distinct minimal ideals is the zero ideal, this sum is in fact a direct sum of algebras. If $\jdeal \subset \alg$ is a minimal proper ideal not equal to $g_{i}\cdot I$ for any $1 \leq i \leq r$, then its product with each $g_{i}\cdot \ideal$ is the zero ideal, so its product with $\alg$ is the zero ideal. Applying the preceding argument with $\jdeal$ in place of $\ideal$ shows that the multiplication on $\alg$ is trivial, contrary to hypothesis. The preceding shows that any minimal proper ideal of $(\alg, \mlt)$ has the form $g_{i}\cdot \ideal$ for some $1 \leq i \leq r$, and so $G$ permutes the set $\{g_{1}\cdot \ideal, \dots, g_{r}\cdot\ideal\}$. Since $G$ is connected with simple Lie algebra, any proper normal subgroup of $G$ must be discrete, so this permutation action must be trivial. Consequently, each $g_{i}\cdot \ideal$ is a $G$-invariant linear subspace of $\alg$, contradicting the $G$-irreducibility of $\alg$.
% Since $G$ is path-connected this permutation action must be trivial, and so each $g_{i}\cdot \ideal$ is a $G$-invariant linear subspace of $\alg$, contradicting the $G$-irreducibility of $\alg$.
\end{proof}

\begin{theorem}\label{preeinsteintheorem}
A nontrivial finite-dimensional Euclidean metrized commutative algebra $(\alg, \mlt, h)$ on which a real Lie group $G$ acts irreducibly by isometric automorphisms is exact and Killing metrized with Killing form $\tau_{\mlt}(x, y) = \tr L_{\mlt}(x)L_{\mlt}(y)$ equal to a nonzero multiple of the metric $h$. 
\end{theorem}

\begin{proof}
Since $G$ acts on $(\alg, \mlt)$ by automorphisms, the linear form $x \to \tr L_{\mlt}(x)$ is $G$-invariant and so, because $\alg$ is $G$-irreducible, $\ker \tr L_{\mlt} = \alg$.
Likewise, $\tau_{\mlt}$ is $G$-invariant, and because both $h$ and $\tau_{\mlt}$ are $G$-invariant, the endomorphism $A \in \eno(\alg)$ defined by $\tau_{\mlt}(x, y) = h(Ax, y)$ for $x, y, \in\alg$ is $G$-invariant as well. Because both $h$ and $\tau_{\mlt}$ are symmetric, $A$ is $h$-self-adjoint and so is semisimple with real eigenvalues. Since $\alg$ is $G$-irreducible and $A$ is $h$-self-adjoint, by the Schur Lemma, $A = \ka \id_{\alg}$ for some $\ka \in \rea$, so $\tau_{\mlt} =\ka h$. Because $\ka \dim(\alg) = \tr A = |\mu|^{2}_{h}$ where $\mu(x, y, z) = h(x\mlt y, z)$, were $\ka$ zero, then $\mu$ would be identically zero. However, because $(\alg, \mlt)$ is assumed nontrivial there exist $x, y \in\alg$ such that $x \mlt y \neq 0$, so, by the nondegeneracy of $h$ there is $z \in \alg$ such that $h(x\mlt y, z) \neq 0$. 

The following alternative argument shows $\ka \neq 0$. By \cite[Lemma $2.1$]{Tkachev-laplace}, a nontrivial finite-dimensional Euclidean metrized commutative real algebra $(\alg, \mlt)$ contains a nonzero idempotent, for, if $y$ is an extremum of the restriction to the $h$-unit sphere of the cubic polynomial $h(x\mlt x, x)$, then $e =h(y\mlt y, y)^{-1}y$ is a nonzero idempotent. By the invariance of $h$, $L_{\mlt}(e)$ is a self-adjoint endomorphism of $\alg$ that preserves the orthogonal complement of the span of $e$. Hence it is diagonalizable with (possibly repeated) real eigenvalues $1, \la_{1}, \dots, \la_{n-1}$, and $\tau_{\mlt}(e, e) = \tr L_{\mlt}(e)^{2} = 1 + \sum_{i = 1}^{n-1}\la_{i}^{2} \geq 1$. Since $\lb e, e\ra \neq 0$, it follows that $\ka = \tau_{\mlt}(e, e)h(e, e)^{-1} \neq 0$.
\end{proof}

\section{Action of metric curvature tensors on tensors}
Let $(\ste, h)$ be a metric vector space. Note that $\om_{ijkl} \in \mcurvsym(\std) = \{\om_{ijkl} \in \tensor^{4}\std: \om_{(ij)kl} = \om_{ijkl} = \om_{(kl)ij}, \om_{(ijk)l} = 0\}$ satisfies $\om_{klij} = \om_{ijkl}$ and $\om_{k[ij]l}$ is antisymmetric in $k$ and $l$. A straightforward calculation proves that the linear map $\mdual:\mcurvsym(\std) \to \mcurv(\std)$ defined by $\mdual(\mu)_{ijkl} = \tfrac{2}{\sqrt{3}}\mu_{k[ij]l}$ is an isometric isomorphism with inverse given by $\mdual^{-1}(\nu)_{ijkl} = -\tfrac{2}{\sqrt{3}}\nu_{k(ij)l}$.

It is convenient to abuse notation by identifying $S^{2}(\ext^{2}\std)$ and $S^{2}(S^{2}\std)$ with the subspaces $\{\Psi_{ijkl} \in \tensor^{4}\std: \Psi_{klij} = \Psi_{ijkl} = \Psi_{[ij]kl} = \Psi_{ij[kl]}\}$ and $\{\Phi_{ijkl} \in \tensor^{4}\std: \Phi_{klij} = \Phi_{ijkl} = \Phi_{(ij)kl} = \Phi_{ij(kl)}\}$. As $\Psi_{ijkl} \in \ext^{4}\std$ satisfies $\Psi_{klij} = \Psi_{ijkl}$, $\ext^{4}\std$ can be viewed as a subspace of $S^{2}(\ext^{2}\std)$, and likewise $S^{4}\std$ can be regarded as a subspace of $S^{2}(S^{2}\std)$. The Bianchi identity implies $\ext^{4}\std$ and $\mcurv(\std)$ are orthogonal subspaces of $S^{2}(\ext^{2}\std)$ with trivial intersection, and similarly $S^{4}\std$ and $\mcurvsym(\std)$ are orthogonal subspaces of $S^{2}(S^{2}\std)$ with trivial intersection. Comparing dimensions shows $S^{2}(\ext^{2}\std) \simeq \mcurv(\std)\oplus \ext^{4}\std$ and $S^{2}(S^{2}\std) \simeq \mcurvsym(\std)\oplus S^{4}\std$.

For $\Psi \in S^{2}(\ext^{2}\std)$, $\Psi_{[ijkl]} = \Psi_{[ijk]l}$, and the orthogonal projection $\stwoproj:S^{2}(\ext^{2}\std) \to \mcurv(\std)$ is given by $\stwoproj(\Psi)_{ijkl} = \Psi_{ijkl} - \Psi_{[ijk]l}$, so $\stwoproj \oplus (\Id_{S^{2}(\ext^{2}\std)} - \stwoproj): S^{2}(\ext^{2}\std) \to \mcurv(\std)\oplus \ext^{4}\std$ is an isometric isomorphism.
Similarly, the orthogonal projection $\stwoprojdual:S^{2}(S^{2}\std) \to \mcurvsym(\std)$ is given by $\stwoprojdual(\Psi)_{ijkl} = \Psi_{ijkl} - \Psi_{(ijk)l}$, so $(\mdual\circ \stwoprojdual) \oplus (\Id_{S^{2}(S^{2}\std)} - \stwoprojdual): S^{2}(S^{2}\std) \to \mcurv(\std)\oplus S^{4}\std$ is an isometric isomorphism, where, for $\om_{ijkl} \in S^{2}(S^{2}\std)$, 
\begin{align}\label{mdualstwoprojdual}
(\mdual \circ \stwoprojdual)(\om)_{ijkl} = \tfrac{2}{\sqrt{3}}\om_{k[ij]l}.
\end{align}

\begin{lemma}\label{pairlemma}
Let $(\ste, h)$ be a finite dimensional metric vector space and let $k \geq 2$. For $\al, \be \in \tensor^{k}\std$, setting $\pair(\al, \be)$ equal to the $h$-orthogonal projection $\stwoproj(\prepair(\al, \be)) \in \mcurv(\std)$ of $\prepair(\al, \be)_{abcd} = \Psi(\al, \be)_{[ab][cd]} \in S^{2}(\ext^{2}\std)$ where 
\begin{align}\label{preprepair}
\Psi(\al, \be)_{abcd} = \tfrac{1}{k}\sum_{r \neq s}\al_{i_{1}\dots i_{r-1}b i_{r+1}\dots i_{s-1} c i_{s+1}\dots i_{k}}\be^{i_{1}\dots i_{r-1}}\,_{a}\,^{i_{r+1}\dots i_{s-1}}\,_{d}\,^{i_{s+1}\dots i_{k}},
\end{align}
defines a symmetric $O(h)$-equivariant bilinear map $\pair:\tensor^{k}\std \times \tensor^{k}\std \to \mcurv(\std)$. 
\end{lemma}
\begin{proof}
There need be checked only that $\pair(\be, \al)  = \pair(\al, \be)$.
The definition \eqref{preprepair} implies $\Psi(\be, \al)_{abcd} = \Psi(\al, \be)_{badc}$ and there follows
\begin{align}\label{prepair}
\begin{split}
\prepair(\be, \al)_{abcd} &=\Psi(\be, \al)_{[ab][cd]} = \tfrac{1}{4}\left(\Psi(\be, \al)_{abcd} -\Psi(\be, \al)_{bacd} -\Psi(\be, \al)_{abdc} +  \Psi(\be, \al)_{badc}\right) \\
&= \tfrac{1}{4}\left(\Psi(\al, \be)_{badc} -\Psi(\al, \be)_{abdc} -\Psi(\al, \be)_{bacd} +  \Psi(\al, \be)_{abcd}\right) \\
&= \Psi(\al, \be)_{[ab][cd]}  = \prepair(\al, \be)_{abcd},
\end{split}
\end{align}
so that $\pair(\be, \al) = \stwoproj(\prepair(\be, \al)) = \stwoproj(\prepair(\al, \be)) = \pair(\al, \be)$. That $\pair$ is $O(h)$-equivariant means that $\pair(g\cdot \al, g \cdot  \be) = g \cdot \pair(\al, \be)$ for all $\al, \be \in \tensor^{k}\std$ and $g \in O(h)$, and this is apparent from the manifest $O(h)$-equivariance of the construction of $\pair$.
\end{proof}
Because $\pair$ is $O(h)$-equivariant, its restriction to $\stw \times \stw$ where $\stw$ is any $O(h)$-submodule $\stw \subset \tensor^{k}\std$ takes values in some $O(h)$-submodule of $\mcurv(\std)$.

The symmetric group on $k$ elements, $S_{k}$, acts on the left on $\tensor^{k}\std$ by $(\si \cdot \al)_{i_{1}\dots i_{k}} = \al_{i_{\si^{-1}(1)}\dots i_{\si^{-1}(k)}}$ for $\si \in S_{k}$. A filling of a $k$ box Young diagram by distinct indices $i_{1},\dots, i_{k}$ determines the submodule of $\tensor^{k}\std$ comprising tensors antisymmetric in the indices in any column of the filled Young diagram, and such that there vanishes the antisymmetrization over a subset of indices comprising the indices in any given column plus any index from any column to the right of the given column. The tensors in such a submodule are said to have the type given by the filled Young diagram. By \cite[Theorems $5.7.$A and $5.7.$C]{Weyl}, an $O(n)$-module of covariant trace-free tensors on an $n$-dimensional vector space having symmetries corresponding to a Young diagram is nontrivial if and only if the sum of the lengths of the first two columns is no greater than $n$, and by \cite[Theorem $5.7$G]{Weyl} all irreducible finite-dimensional $O(n)$-modules correspond to some such Young diagram. For example, $\mcurvweyl(\std) = \ker \rictr \subset \mcurv(\std)$ corresponds with a filling of a $2\times 2$ square array of boxes, so $\mcurvweyl(\std) = \{0\}$ if $\dim \ste < 4$. If $n = \dim \ste \geq 4$, $\mcurvweyl(\std)$ is a nontrivial irreducible $O(n)$-module. When $\dim \ste = 4$, it decomposes as an $SO(n)$-module into two five-dimensional submodules (see Section \ref{4dsection}). 
A linear endomorphism is said to preserve the type of tensors if it maps tensors with the symmetries determined by a given filled Young diagram into tensors with the same symmetries. 
\begin{lemma}\label{curvoplemma}
Let $(\ste, h)$ be a finite dimensional metric vector space and let $k \geq 2$.
The linear map $\mcurv(\std) \to\eno(\tensor^{k}\std)$ associating with $\sX \in \mcurv(\std)$ the operator $\op{\sX} \in \eno(\tensor^{k}\std)$ defined by 
\begin{align}\label{curvopdefined}
\op{\sX}(\al)_{i_{1}\dots i_{k}} = \tfrac{1}{k}\sum_{r \neq s}\sX^{p}\,_{i_{r}i_{s}}\,^{q}\al_{i_{1}\dots i_{r-1}pi_{r+1}\dots i_{s-1} q i_{s+1}\dots i_{k}},
\end{align}
commutes with the action of $S_{k}$ on $\tensor^{k}\std$, so preserves the type of tensors; is $O(h)$-equivariant, meaning $\op{g \cdot \sX}(\al) = g\cdot \op{\sX}(g^{-1}\cdot \al)$ for all $\al \in \tensor^{k}\std$ and $g \in O(h)$; and has image in the self-adjoint endomorphisms, meaning $\op{\sX} \in \sa(\tensor^{k}\std, \lb\dum, \dum \ra)$ for all $\sX \in \mcurv(\std)$.
\end{lemma}

\begin{proof}
That $\op{\sX}$ preserves the type of tensors follows from $\op{\sX}(\si \cdot \al) = \si \cdot \op{\sX}(\al)$ for $\si \in S_{k}$ and $\al \in \tensor^{k}\std$. This is immediate from the definition \eqref{curvopdefined} and the symmetries of $\sX_{ijkl}$. The relation $\op{g \cdot \sX}(\al) = g\cdot \op{\sX}(g^{-1}\cdot \al)$ likewise follows from a straightforward computation (for this relation to hold it is necessary that $g$ be orthogonal because the metric is used in \eqref{curvopdefined} where indices are contracted).
It follows from \eqref{curvopdefined} that
\begin{align}\label{curvopinjective}
\begin{split}
\lb \op{\sX}(\al), \be \ra  = \lb \sX, \pair(\al, \be)\ra 
\end{split}
\end{align}
where $\pair(\al, \be)$ is as in Lemma \ref{pairlemma}. By Lemma \ref{pairlemma}, $\pair(\al, \be)$ is symmetric in $\al$ and $\be$ and by \eqref{curvopinjective} this implies that $\lb \op{\sX}(\al), \be \ra = \lb \al, \op{\sX}(\be)\ra$ for all $\al, \be \in \tensor^{k}\std$, so $\op{\sX}$ is self-adjoint.
\end{proof}

When it is known that $\op{\sX}$ preserves a subspace $\stw \subset \tensor^{k}\std$, there is written $\op{\sX}_{\stw}$ instead of $\op{\sX}$ when helpful for clarity. 
By Lemma \ref{curvoplemma}, for $\sX \in \mcurv(\std)$ and $k \geq 2$, $\op{\sX}$ preserves $\ext^{k}\std$ and $S^{k}\std$, so there are written $\op{\sX}_{\ext^{k}\std} \in \sa(\ext^{k}\std)$ and $\op{\sX}_{S^{k}\std} \in \sa(S^{k}\std)$ for the restrictions of $\op{\sX}$ to $\ext^{k}\std, S^{k}\std \subset \tensor^{k}\std$.
By Lemma \ref{curvoplemma}, $\op{\sX}_{S^{k}\std}(\al)_{i_{1}\dots i_{k}} = (k-1)\sX^{p}\,_{(i_{1}i_{2}}\,^{q}\al_{i_{3}\dots i_{k)}pq}$ for $\al \in S^{k}\std$ and $\op{\sX}_{\ext^{k}\std}(\al)_{i_{1}\dots i_{k}} = (k-1)\sX^{p}\,_{[i_{1}i_{2}}\,^{q}\al_{i_{3}\dots i_{k]}pq} = -\tfrac{k-1}{2}\sX^{pq}\,_{[i_{1}i_{2}}\al_{i_{3}\dots i_{k]}pq}$ for $\al \in \ext^{k}\std$.
In particular, when $k = 2$, $\op{\sX}_{S^{2}\std}(\al)_{ij} =  \al^{pq}\sX_{ipqj}$ and $\op{\sX}_{\ext^{2}\std}(\al)_{ij}  = \al^{pq}\sX_{ipqj} = -\tfrac{1}{2}\al^{pq}\sX_{ijpq}$.
For example, $\op{\sX}_{S^{2}\std}(h) = \rictr(\sX)$. 
For $\si_{ij} \in \tensor^{2}\std$, write $(\sym \si)_{ij} = \si_{(ij)}$ and $(\asym\si)_{ij} = \si_{[ij]}$. For $\sX \in \mcurv(\std)$, by Lemma \ref{curvoplemma}, $\op{\sX}_{\tensor^{2}\std}(\si)= \op{\sX}_{S^{2}\std}(\sym \si) + \op{\sX}_{\ext^{2}\std}(\asym \si)$.
This means that if $\tensor^{2}\std$ is regarded as the orthogonal direct sum $S^{2}\std \oplus \ext^{2}\std$ via $\si_{ij} = \si_{(ij)} + \si_{[ij]}$, then $\op{\sX}_{\tensor^{2}\std} = \op{\sX}_{S^{2}\std}\oplus \op{\sX}_{\ext^{2}\std}$ is an orthogonal direct sum too.

\begin{example}\label{hhidexample}
Because $h \kwedge h \in \mcurv(\std)$ satisfies $g\cdot (h\kwedge h) = h\kwedge h$ for $g \in O(h)$, when $\stw$ is an irreducible $O(n)$-submodule preserved by $\op{h \kwedge h}$, it follows from Lemma \ref{curvoplemma} and the Schur lemma that $\op{h\kwedge h}_{\stw}$ is a constant multiple of $\Id_{\stw}$. For example, $\op{h \kwedge h}_{\ext^{2}\std} = -\id_{\ext^{2}\std}$.
\end{example}

Transferring the canonical isomorphisms $\eno(\ext^{2}\std) \simeq \ext^{2}\std \tensor (\ext^{2}\std)^{\ast}$ and $\eno(S^{2}\std) \simeq S^{2}\std \tensor (S^{2}\std)^{\ast}$ via metric duality yields linear isomorphisms 
\begin{align}\label{sharp4defined}
\begin{aligned}
&\sharp:\tensor^{2}(\ext^{2}\std) = \{\Psi_{ijkl} \in \tensor^{4}\std: \Psi_{ijkl} = \Psi_{[ij]kl} = \Psi_{ij[kl]}\} \to \eno(\ext^{2}\std),&\\
&\sharp:\tensor^{2}(S^{2}\std) = \{\Phi_{ijkl} \in \tensor^{4}\std: \Phi_{ijkl} = \Phi_{(ij)kl} = \Phi_{ij(kl)}\} \to \eno(S^{2}\std),
\end{aligned}
\end{align}
defined by sending $\Psi \in \tensor^{2}(\ext^{2}\std)$ to $\Psi^{\sharp}(\al)_{ij} = \Psi_{ij}\,^{pq}\al_{pq}$ and $\Phi \in \tensor^{2}(S^{2}\std) $ to $\Phi^{\sharp}(\si)_{ij} = \Phi_{ij}\,^{pq}\si_{pq}$.
%Omitted for space
%Endowed with the trace norm, the subspaces $\sa(\ext^{2}\std) \subset \eno(\ext^{2}\std)$ and $\sa(S^{2}\std)\subset \eno(S^{2}\std)$ of self-adjoint endomorphisms are isometrically isomorphic with $S^{2}(\ext^{2}\std)$ and $S^{2}(S^{2}\std)$ viewed as subspaces of $\tensor^{4}\std$.
Note that, for $\sX \in \mcurv(\std)$, the tensor identified in this way with $\op{\sX}_{\ext^{2}\std}$ is $-\tfrac{1}{2}\sX_{ijkl}$. Said otherwise, for $\sX$ viewed as an element of $S^{2}(\ext^{2}\std)$, $\sX^{\sharp}  = -2\op{\sX}_{\ext^{2}\std}$.

\begin{lemma}\label{tracelemma}
Let $(\ste,h)$ be a Euclidean vector space. 
For $\Psi \in \tensor^{2}(\ext^{2}\std)$ and $\Phi \in \tensor^{2}(S^{2}\std)$, $\tr \Psi^{\sharp}  = \Psi_{pq}\,^{pq}$ and $\tr \Phi^{\sharp}  = \Phi_{pq}\,^{pq}$.
\end{lemma}
\begin{proof}
For an orthonormal basis $\{E^{(1)}_{i}, \dots, E^{(n)}_{i}\}$ of $\std$, $\{\tfrac{1}{\sqrt{2}} E^{(a)}\dwedge E^{(b)}: 1 \leq a < b \leq n\}$ is an orthonormal basis of $\ext^{2}\std$ and $\{\sqrt{2} E^{(a)}\sprod E^{(b)} : 1 \leq a < b \leq n\}\cup\{E^{(a)}\sprod E^{(a)}:1 \leq a \leq n\}$ is an orthonormal basis of $S^{2}\std$ so
\begin{align}
\begin{aligned}
\tr \Psi^{\sharp} & = \tfrac{1}{2}\sum_{1 \leq a < b \leq n}\lb\Psi^{\sharp}(E^{(a)}\dwedge E^{(b)}), E^{(a)}\dwedge E^{(b)}\ra %= 2\sum_{1 \leq a < b \leq n}\Psi^{ijkl}E^{(a)}_{i}E^{(b)}_{j}E^{(a)}_{k}E^{(b)}_{l} \\
=  \sum_{a = 1}^{n}\sum_{b = 1}^{n}\Psi^{ijkl}E^{(a)}_{i}E^{(b)}_{j}E^{(a)}_{k}E^{(b)}_{l} = \Psi_{pq}\,^{pq},\\
\tr \Phi^{\sharp} & = 2\sum_{1 \leq a < b \leq n}\lb\Phi^{\sharp}(E^{(a)}\sprod E^{(b)}), E^{(a)}\sprod E^{(b)}\ra + \sum_{1 \leq a  \leq n}\lb\Phi^{\sharp}(E^{(a)}\sprod E^{(a)}), E^{(a)}\sprod E^{(a)}\ra \\
%&= 2\sum_{1 \leq a < b \leq n}\Phi^{ijkl}E^{(a)}_{i}E^{(b)}_{j}E^{(a)}_{k}E^{(b)}_{l} + \sum_{1 \leq a  \leq n}\Phi^{ijkl}E^{(a)}_{i}E^{(a)}_{j}E^{(a)}_{k}E^{(a)}_{l} \\
&=  \sum_{a = 1}^{n}\sum_{b = 1}^{n}\Phi^{ijkl}E^{(a)}_{i}E^{(b)}_{j}E^{(a)}_{k}E^{(b)}_{l} = \Phi_{pq}\,^{pq}. 
\end{aligned}&\qedhere
\end{align}
\end{proof}
For example, Lemma \ref{tracelemma} implies that $\tr\op{\sX}_{\ext^{2}\std} = \tfrac{1}{2}\scal(\sX)$ for $\sX \in \mcurv(\std)$.

When $\op{\mcurv(\std)}$ preserves $\stw \subset \tensor^{k}\std$, Lemma \ref{curvoplemma} does not affirm that the linear map $\sX \to \op{\sX}_{\stw}$ is injective. From \eqref{curvopinjective} it is apparent that this is true if and only if $\{\pair(\al, \be): \al, \be \in \stw\}$ spans $\mcurv(\std)$. 
Corollary \ref{traceextcorollary} shows that $\sX \to \op{\sX}_{\ext^{2}\std}$ and $\sX \to \op{\sX}_{S^{2}\std}$ are injective.
It is used in the proof of Theorem \ref{mcurvweyltheorem}.
Note that viewing $\sX \in \mcurv(\std)$ as an element of $\mdual(S^{2}(S^{2}\std))$ yields $\mdual^{-1}(\sX)^{\sharp}  = -\tfrac{2}{\sqrt{3}}\op{\sX}_{S^{2}\std}$.

\begin{corollary}\label{traceextcorollary}
Let $(\ste,h)$ be an $n$-dimensional Euclidean vector space. For $\sX, \sY \in \mcurv(\std)$, 
\begin{align}\label{traceext}
&\tr(\op{\sX}_{\ext^{2}\std}\circ \op{\sY}_{\ext^{2}\std}) = \tfrac{1}{4}\lb \sX, \sY\ra, && \tr(\op{\sX}_{S^{2}\std}\circ \op{\sY}_{S^{2}\std}) = \tfrac{3}{4}\lb \sX, \sY\ra.
\end{align} 
The $O(n)$-equivariant linear maps $\mcurv(\std) \to \sa(\ext^{2}\std)$, $\mcurv(\std) \to \sa(S^{2}\std)$, and $\mcurv(\std) \to \sa(\tensor^{2}\std)$ given by $\sX \to 2\op{\sX}_{\ext^{2}\std} = -\sX^{\sharp}$, $\sX \to  \tfrac{2}{\sqrt{3}}\op{\sX}_{S^{2}\std} = - \mdual^{-1}(\sX)^{\sharp}$, and $\sX \to \op{\sX}_{\tensor^{2}\std}$ are isometric with respect to the trace-norms on $\sa(\ext^{2}\std)$, $\sa(S^{2}\std)$, and $\sa(\tensor^{2}\std)$ and injective.
\end{corollary}
\begin{proof}
Via $\sharp$, the endomorphisms $\op{\sX}_{\ext^{2}\std} \circ \op{\sY}_{\ext^{2}\std}$ and $\op{\sX}_{S^{2}\std} \circ \op{\sY}_{S^{2}\std}$ are identified with the tensors $\tfrac{1}{4}\sX_{kl}\,^{pq}\sY_{pqij} \in \tensor^{2}(\ext^{2}\std)$ and $\sX_{k(pq)l}\sY_{i}\,^{(pq)}\,_{j} \in \tensor^{2}(S^{2}\std)$, so \eqref{traceext} follows from Lemma \ref{tracelemma} and $\sX_{i(jk)l}\sY^{i(jk)l} = \tfrac{3}{4}\lb \sX, \sY\ra$.
The injectivity follows from the nondegeneracy of the pairing $\lb \dum, \dum \ra$.
\end{proof}

\section{Algebra structures on the space of metric curvature tensors}\label{curvmultsection}
This section defines the multiplication $\opmult$ and derives its basic properties.

The injectivity of $\op{\dum}_{\ext^{2}\std}$ and $\op{\dum}_{S^{2}\std}$ shown in Corollary \ref{traceextcorollary} means that commutative algebra structures $\opmulta$ and $\opmults$ on $\mcurv(\std)$ can be constructed as follows. By the injectivity there are unique $\sX \opmulta \sY \in \mcurv(\std)$ and $\sX \opmults \sY \in \mcurv(\std)$ such that $(\sX\opmulta \sY)^{\sharp} = -2\op{\sX \opmulta \sY }_{\ext^{2}\std}$ equals the orthogonal projection on $\op{\mcurv(\std)}_{\ext^{2}\std}$ of the Jordan product $(-2\op{\sX}_{\ext^{2}\std})\jmult (-2\op{\sY}_{\ext^{2}\std}) = \sX^{\sharp}\jmult \sY^{\sharp}\in \sa(\ext^{2}\std) $ and $\mdual^{-1}(\sX\opmults \sY)^{\sharp} = -\tfrac{2}{\sqrt{3}}\op{\sX \opmults \sY }_{S^{2}\std}$ equals the orthogonal projection on $\op{\mcurv(\std)}_{S^{2}\std}$ of the Jordan product $(-\tfrac{2}{\sqrt{3}}\op{\sX}_{S^{2}\std})\jmult (-\tfrac{2}{\sqrt{3}}\op{\sY}_{S^{2}\std}) = (\mdual^{-1}(\sX)^{\sharp}\jmult \mdual^{-1}(\sY)^{\sharp}  \in \sa(S^{2}\std))$. More precisely:
%\begin{align}\label{opmultasdefined}
%\begin{aligned}
%&-\tfrac{1}{2}(\sX\opmulta \sY)^{\sharp} = \op{\sX\opmulta \sY}_{\ext^{2}\std} = \stwoproj\left(\left(\left(\op{\sX}_{\ext^{2}\std}\jmult \op{\sY}_{\ext^{2}\std}\right)^{\flat}\right)\right)^{\sharp},&\\
%&-\tfrac{\sqrt{3}}{2}\mdual^{-1}(\sX\opmults \sY)^{\sharp} = \op{\sX\opmults \sY}_{S^{2}\std} = \left( \stwoprojdual\left(\left(\op{\sX}_{S^{2}\std}\jmult \op{\sY}_{S^{2}\std}\right)^{\flat}\right)\right)^{\sharp},
%\end{aligned}
%\end{align}
%where $\flat$ is the inverse of $\sharp$, so that
\begin{align}
\label{opmultasdefined2}
&\sX\opmulta \sY = -2\stwoproj\left(\left(\op{\sX}_{\ext^{2}\std}\jmult \op{\sY}_{\ext^{2}\std}\right)^{\flat}\right),&&
&\sX\opmults \sY = -\tfrac{2}{\sqrt{3}}\mdual\circ \stwoprojdual\left(\left(\op{\sX}_{S^{2}\std}\jmult \op{\sY}_{S^{2}\std}\right)^{\flat}\right),
\end{align}
where $\flat$ is the inverse of $\sharp$
\begin{lemma}
Let $(\ste, h)$ be a metric vector space.
For $\sX, \sY \in \mcurv(\std)$, the commutative multiplications $\opmulta$ and $\opmults$ on $\mcurv(\std)$ are given by
\begin{align}
\label{opmultadefined}
\begin{split} (\sX \opmulta \sY)_{ijkl} & =- \tfrac{1}{4}\left(\sX_{ij}\,^{pq}\sY_{klpq} + \sY_{ij}\,^{pq}\sX_{klpq} - 2\sX_{[ij}\,^{pq}\sY_{kl]pq}\right),
\end{split}\\
\label{opmultsdefined}\begin{split}(\sX \opmults \sY)_{ijkl} & =-\tfrac{1}{3}\left(\sX_{k(pq)i}\sY_{j}\,^{(pq)}\,_{l}-\sX_{k(pq)j}\sY_{i}\,^{(pq)}\,_{l} + \sX_{j(pq)l}\sY_{k}\,^{(pq)}\,_{i}- \sX_{i(pq)l}\sY_{k}\,^{(pq)}\,_{j}\right).
\end{split}
\end{align}
The Ricci and scalar traces are
\begin{align}
\label{opmultasrictrace}
\begin{aligned}
&\rictr(\sX \opmulta \sY)_{ij} = \tfrac{1}{2}\sX_{(i}\,^{abc}\sY_{j)abc}, \qquad \scal(\sX \opmulta \sY) = \tfrac{1}{2}\lb \sX, \sY\ra, &\\
&\rictr(\sX \opmults \sY)_{ij} = -\tfrac{1}{2}\sX_{(i}\,^{abc}\sY_{j)abc} + \tfrac{1}{3}\left(\op{\sX}_{S^{2}\std}(\rictr(\sY))_{ij} + \op{\sY}_{S^{2}\std}(\rictr(\sX))_{ij}  \right),  \\
&\scal(\sX \opmults \sY) =  -\tfrac{1}{2}\lb \sX, \sY\ra + \tfrac{2}{3}\lb \rictr(\sX), \rictr(\sY)\ra.
\end{aligned}
\end{align}
\end{lemma}
\begin{proof}
Each of \eqref{opmultadefined} and \eqref{opmultsdefined} is just a matter of unraveling the notation in \eqref{opmultasdefined2}. 
Straightforward computations using \eqref{opmultadefined} and \eqref{opmultsdefined} show \eqref{opmultasrictrace}.
\end{proof}

\begin{lemma}\label{stalgebralemma}
Let $(\ste, h)$ be a metric vector space. For $s, t \in \rea$ and $\opmult_{s, t} = s\opmulta + t \opmults$, the commutative algebra $(\mcurv(\std), \opmult_{s, t})$ is metrized by $\lb\dum, \dum\ra$ and $O(h)$ acts on it by isometric algebra automorphisms.
\end{lemma}
\begin{proof}
By \eqref{opmultasdefined2}, Lemma \ref{traceextcorollary}, and using $\sZ^{\sharp}  = -2\op{\sZ}_{\ext^{2}\std}$ and $\mdual^{-1}(\sZ)^{\sharp}  = -\tfrac{2}{\sqrt{3}}\op{\sZ}_{S^{2}\std}$,
\begin{align}\label{opmultasinvariance}
\begin{aligned}
\lb \sX &\opmulta \sY, \sZ\ra = -2\left\lb\stwoproj\left(\left(\op{\sX}_{\ext^{2}\std}\jmult \op{\sY}_{\ext^{2}\std}\right)^{\flat}\right), \sZ \right\ra  
= 4\left\lb \left(\op{\sX}_{\ext^{2}\std}\jmult \op{\sY}_{\ext^{2}\std}\right)^{\flat}, \op{\sZ}_{\ext^{2}\std}^{\flat} \right\ra\\
&= 2\tr\left(\op{\sX}_{\ext^{2}\std}\circ \op{\sY}_{\ext^{2}\std}\circ \op{\sZ}_{\ext^{2}\std} + \op{\sY}_{\ext^{2}\std}\circ \op{\sX}_{\ext^{2}\std}\circ \op{\sZ}_{\ext^{2}\std}  \right),\\
\lb \sX &\opmults \sY, \sZ\ra =-\tfrac{2}{\sqrt{3}}\left\lb\mdual\circ \stwoprojdual\left(\left(\op{\sX}_{S^{2}\std}\jmult \op{\sY}_{S^{2}\std}\right)^{\flat}\right), \sZ \right\ra  
= \tfrac{4}{3}\left\lb \left(\op{\sX}_{S^{2}\std}\jmult \op{\sY}_{S^{2}\std}\right)^{\flat}, \op{\sZ}_{S^{2}\std}^{\flat} \right\ra \\
&= \tfrac{2}{3}\tr\left(\op{\sX}_{S^{2}\std}\circ \op{\sY}_{S^{2}\std}\circ \op{\sZ}_{S^{2}\std} + \op{\sY}_{S^{2}\std}\circ \op{\sX}_{S^{2}\std}\circ \op{\sZ}_{S^{2}\std}  \right).
\end{aligned}
\end{align}
The right-hand sides of \eqref{opmultasinvariance} are completely symmetric in $\sX$, $\sY$, and $\sZ$, and this shows the invariance with respect to $\lb\dum, \dum\ra$ of $\opmulta$ and $\opmults$ and so also of any linear combination $s\opmulta + t \opmults$.
That $O(h)$ acts on $(\mcurv(\std), \opmult_{s, t})$ by isometric algebra automorphisms follows from the manifest $O(h)$ invariance of the construction of $\opmulta$ and $\opmults$.
\end{proof}

Because rescaling the multiplication of a commutative algebra yields an isomorphic algebra, the products $\opmult_{s, t}$ are best viewed as parameterized by $[s,t] \in \proj(\rea)$. 

For $\sX, \sY \in \mcurv(\std)$ define $B_{ijkl} = B(\sX, \sY)_{ijkl}  \in S^{2}(\tensor^{2}\std)$ by 
\begin{align}\label{bsharpdefined}
B(\sX, \sY)^{\sharp} = \op{\sX}_{\tensor^{2}\std}\jmult \op{\sY}_{\tensor^{2}\std} = \op{\sX}_{\ext^{2}\std}\jmult \op{\sY}_{\ext^{2}\std} + \op{\sX}_{S^{2}\std}\jmult \op{\sY}_{S^{2}\std}, 
\end{align}
the second equality by Lemma \ref{curvoplemma}, so that
\begin{align}
\label{bdefined}
B_{ijkl} = B(\sX, \sY)_{ijkl} &= \tfrac{1}{2}\left(\sX_{ipjq}\sY_{k}\,^{p}\,_{l}\,^{q} + \sY_{ipjq}\sX_{k}\,^{p}\,_{l}\,^{q} \right) = \left(\op{\sX}_{\tensor^{2}\std}\jmult \op {\sY}_{\tensor^{2}\std}\right)^{\flat}_{ijkl}.
\end{align}

\begin{lemma}
Let $(\ste, h)$ be a metric vector space.
For $\sX, \sY \in \mcurv(\std)$, the commutative multiplications $\opmulta$ and $\opmults$ on $\mcurv(\std)$ are given by
\begin{align}
\label{opmultadefinedb}
\begin{split} (\sX \opmulta \sY)_{ijkl} 
& = -2(B_{[ij]kl} - B_{[ijkl]}) = -\tfrac{2}{3}\left(2B_{[ij]kl} - B_{k[ijl]} + B_{[i|k|j]l}\right),
\end{split}\\
\label{opmultsdefinedb}\begin{split}(\sX \opmults \sY)_{ijkl} 
& = -\tfrac{2}{3}\left(B_{k[ij]l} + B_{[i|k|j]l} \right) = -\tfrac{2}{3}\left(B_{[ij]kl} + 2B_{k[ij]l} - 3B_{[ijkl]} \right).
\end{split}
\end{align}
\end{lemma}

\begin{proof}
That $B_{ijkl}$ is contained in $S^{2}(\tensor^{2}\std)$ is equivalent to the symmetry $B_{klij} = B_{ijkl}$, that is apparent from \eqref{bdefined}. There holds
\begin{align}
2B_{jilk} = \sX_{jpiq}\sY_{l}\,^{p}\,_{k}\,^{q} + \sY_{jpiq}\sX_{l}\,^{p}\,_{k}\,^{q} = \sX_{iqjp}\sY_{k}\,^{q}\,_{l}\,^{p} + \sY_{iqjp}\sX_{k}\,^{q}\,_{l}\,^{p} = 2B_{ijkl}.
\end{align}
Hence $2B_{[ij]kl} = B_{ijkl} - B_{jikl} = B_{ijkl} - B_{ijlk} = 2B_{ij[kl]}$. It follows that $2B_{[ij][kl]} = B_{ij[kl]} - B_{ji[kl]} = B_{[ij]kl} - B_{[ji]kl} = 2B_{[ij]kl}$. 
This means that $B_{[ij]kl} = B_{[ij][kl]} = B_{ij[kl]}$ is the orthogonal projection of $B_{ijkl}$ onto $S^{2}(\ext^{2}\std)$. It follows that $B_{ij(kl)} = B_{(ij)kl} = B_{(ij)(kl)} = B_{(kl)(ij)}$ is the orthogonal projection of $B_{ijkl}$ onto $S^{2}(S^{2}\std)$. 
From the second equality of \eqref{bdefined} it follows that $(\sX \opmulta \sY)_{ijkl}$ and $(\sX \opmults \sY)_{ijkl}$ are given by applying $-2\stwoproj$ to $B_{[ij]kl}$ and $-\tfrac{2}{\sqrt{3}} \mdual\circ \stwoprojdual$ to $B_{(ij)kl}$, respectively, and so doing yields the first equalities of \eqref{opmultadefinedb} and \eqref{opmultsdefinedb}. The second equalities of \eqref{opmultadefined} and \eqref{opmultsdefinedb} follow from $3B_{[ijkl]} = B_{[ij]kl} + B_{k[ij]l} - B_{[i|k|j]l}$.
\end{proof}

Since, by Lemma \ref{curvoplemma}, the operator $\op{\sX}$ preserves type, if $\sX, \sY \in \mcurv(\std)$ then $\op{\sX}(\sY), \op{\sY}(\sX) \in \mcurv(\std)$. Theorem \ref{opalgebratheorem} shows that, remarkably, $\op{\sX}(\sY) = \op{\sY}(\sX)$, so that a commutative multiplication of curvature tensors can be defined by $\sX \opmult \sY = \op{\sX}(\sY)$, and , moreover, $\opmult = \tfrac{3}{2}(\opmulta + \opmults)$.

\begin{theorem}\label{opalgebratheorem}
Let $(\ste, h)$ be a metric vector space.
The bilinear map $\opmult:\mcurv(\std) \times \mcurv(\std) \to \mcurv(\std)$ defined by $\sX \opmult \sY  =\op{\sX}(\sY)$ for $\sX, \sY \in \mcurv(\std)$ is $O(h)$-equivariant and symmetric, so determines a commutative algebra structure $\opmult$ on $\mcurv(\std)$ metrized by the pairing $\lb\dum, \dum\ra$ given by complete contraction with $h$ and on which $O(h)$ acts by algebra automorphisms. 
Moreover, $\opmult = \tfrac{3}{2}(\opmulta + \opmults)$ and $\opmult$ equals that multiplication given by polarizing the quadratic term of the expression \eqref{ricciflowcurv} for the evolution of the curvature tensor under the Ricci flow.
\end{theorem}

\begin{proof}
Evaluation of $\op{\sX}(\sY)_{ijkl}$ using \eqref{curvopdefined} and $\pair(\sX, \sY)_{ijkl}$ using \eqref{preprepair} and \eqref{prepair} yields
\begin{align}\label{opmultcommutes}
\begin{split}
4\op{\sX}(\sY)_{ijkl} 
&= - 2\sX_{i}\,^{p}\,_{k}\,^{q}\sY_{jplq}- 2\sY_{i}\,^{p}\,_{k}\,^{q}\sX_{jplq} + 2\sX_{i}\,^{p}\,_{l}\,^{q}\sY_{jpkq} +  2\sY_{i}\,^{p}\,_{k}\,^{q}\sX_{jpkq} \\
&\qquad -  \sX_{ij}\,^{pq}\sY_{pqkl} - \sX_{kl}\,^{pq}\sY_{pqij} = 4\pair(\sX, \sY)_{ijkl},
\end{split}
\end{align}
which is evidently symmetric in $\sX$ and $\sY$, showing that $\sX\opmult \sY = \op{\sX}(\sY) = \op{\sY}(\sX)= \sY \opmult \sX$.
By Lemma \ref{curvoplemma}, $\op{\sY}$ is self-adjoint, so, for $\sX, \sY, \sZ \in \mcurv(\std)$,
\begin{align}\label{opmultinvariance}
\begin{split}
\lb \sX &\opmult \sY, \sZ\ra =\lb \op{\sY}(\sX), \sZ\ra = \lb \sX, \op{\sY}(\sZ)\ra = \lb \sX, \sY \opmult \sZ\ra.
\end{split}
\end{align}
This shows the complete symmetry of $\lb \sX \opmult \sY, \sZ\ra$.
That $O(h)$ acts by algebra automorphisms follows from \eqref{opmultcommutes} and the $O(h)$-equivariance of $\pair$ established in Lemma \ref{pairlemma}.

From the symmetries of $B_{ijkl} = B(\sX, \sY)_{ijkl}$ it follows that
\begin{align}\label{prebelskew}
\begin{split}
\tfrac{1}{2}\left(\sX_{ij}\,^{pq}\sY_{pqkl} \right.&\left. + \sY_{ij}\,^{pq}\sX_{pqkl}\right)  \\
&=  \left(\sX_{jpiq} + \sX_{pijq} \right)\left(\sY^{q}\,_{k}\,^{p}\,_{l} + \sY_{k}\,^{pq}\,_{l} \right) + \left(\sY_{jpiq} + \sY_{pijq} \right)\left(\sX^{q}\,_{k}\,^{p}\,_{l} + \sX_{k}\,^{pq}\,_{l} \right) \\
& = B_{ijkl} + B_{jilk} - B_{ijlk} - B_{jikl} = 4B_{[ij][kl]} = 4B_{ij[kl]} = 4B_{[ij]kl}.
\end{split}
\end{align}
Combining \eqref{opmultcommutes} and \eqref{prebelskew} shows
\begin{align}\label{curvmultdefined}
-(\sX \opmult \sY)_{ijkl} &= B_{ijkl} - B_{ijlk} + B_{ikjl} - B_{iljk}= B_{ijkl} - B_{jikl} + B_{ikjl} - B_{iljk}.
\end{align}
That $\opmult = \tfrac{3}{2}(\opmulta + \opmults)$ follows from \eqref{opmultadefinedb}, \eqref{opmultsdefinedb}, and \eqref{curvmultdefined}. By \cite[section $7$]{Hamilton-four} the polarization of the quadratic term of the expression \eqref{ricciflowcurv} for the evolution of the curvature tensor under the Ricci flow equals the right-hand side of \eqref{curvmultdefined}.
\end{proof}

\begin{remark}
It is not self-evident that the right-hand side of \eqref{curvmultdefined} determines an element of $\mcurv(\std)$. Here this is seen as following from the construction of $\opmult$. It also can be checked directly using the symmetries of $B_{ijkl}$ as in \cite{Hamilton, Hamilton-four}. The invariance of $\lb\dum, \dum \ra$ with respect to $\opmult$ is attributed to G. Huisken in \cite{Bohm-Wilking}. The proof given here is conceptually different than the usual proofs by direct computation or as in \cite[section $1$]{Bohm-Wilking}). 
\end{remark}

A metrized commutative $\rea$-algebra $(\alg, \mlt, h)$ is determined up to isometric isomorphism by the $O(h)$-orbit of the associated homogeneous cubic polynomial, $P_{(\alg, \mlt)}(x)= (1/6)h(x\mlt x, x)$, because $P$ and the metric $h$ determine the multiplication $\mlt$ via polarization.

\begin{corollary}
The cubic polynomial of $(\mcurv(\std), \opmult)$ has the form
\begin{align}
P_{(\mcurv(\std), \opmult)}(\sX) = \tr \op{\sX}_{\ext^{2}\std}^{3} + \tfrac{1}{3}\tr \op{\sX}_{S^{2}\std}^{3}.
\end{align}
\end{corollary}

\begin{proof}
This follows from \eqref{opmultasinvariance} in conjunction with Theorem \ref{opalgebratheorem}.
\end{proof}

\begin{lemma}\label{opweylsubalgebralemma}
Let $(\ste, h)$ be a metric vector space. For $\sX, \sY \in \mcurv(\std)$, 
\begin{align}
\label{opricxy}&
\rictr(\sX \opmult \sY) =  \tfrac{1}{2}\left( \op{\sX}_{S^{2}\std}(\rictr(\sY)) + \op{\sY}_{S^{2}\std}(\rictr(\sX))\right),&&
\scal(\sX \opmult \sY)  = \lb \rictr(\sX), \rictr(\sY)\ra.
\end{align}
In particular, the subspace $\mcurvweyl(\std) = \ker \rictr \subset \mcurv(\std)$ is a subalgebra of $(\mcurv(\std),\opmult)$. 
\end{lemma}

\begin{proof}
The identities \eqref{opricxy} are immediate from \eqref{opmultasrictrace} and Theorem \ref{opalgebratheorem}.
%By \eqref{opricxy}, if $\sX, \sY \in \mcurvweyl(\std)$ then $\sX \opmult \sY \in \mcurvweyl(\std)$. 
%By \eqref{opricxy}, if $\scal(\sX \opmult \sX) = 0$, then $|\rictr(\sX)|^{2} = 0$, so $\sX \in \mcurvweyl(\std)$.
\end{proof}

\begin{lemma}\label{weakpositivitylemma}
Let $(\ste, h)$ be a Euclidean vector space.
\begin{enumerate}
\item (Due to \cite{Richard-curvaturecones}) For all $\sX \in \mcurv(\std)$, $\scal(\sX\opmult \sX) \geq 0$ with equality if and only if $\sX \in \mcurvweyl(\std)$. 
\item\label{sxsxsx} If $\sX \in \mcurv(\std)$ is such that $\op{\sX}_{S^{2}\std}$ is nonnegative, then $\scal((\sX \opmult \sX)\opmult \sX) \geq 0$, with equality if and only if $\rictr(\sX) \in \ker \op{\sX}_{S^{2}\std}$.
\item\label{sxsysz} If $\sY \in \mcurvweyl(\std)$, then $\scal((\sX \opmult \sY)\opmult \sZ - \sX \opmult(\sY \opmult \sZ)) = 0$ for all $\sX, \sZ \in \mcurv(\std)$.
\end{enumerate}
\end{lemma}

\begin{proof}
If $\sX \in \mcurv(\std)$ satisfies $\scal(\sX \opmult \sX) =0$, then, by \eqref{opricxy}, $0 = \scal(\sX \opmult \sX) = |\rictr(\sX)|^{2}$, so $\rictr(\sX) = 0$ and $\sX \in \mcurvweyl(\std)$. By \eqref{opricxy}, $\scal((\sX \opmult \sX)\opmult \sX) = \lb \op{\sX}_{S^{2}\std}(\rictr(\sX)), \rictr(\sX)\ra$, from which \eqref{sxsxsx} follows. From \eqref{opricxy} and the self-adjointness of $\op{\sX}_{S^{2}\std}$ and $\op{\sZ}_{S^{2}\std}$ it follows that $2\scal((\sX \opmult \sY)\opmult \sZ - \sX \opmult(\sY \opmult \sZ)) = \lb \op{\sX}_{S^{2}\std}(\rictr(\sZ)) -  \op{\sZ}_{S^{2}\std}(\rictr(\sX)) , \rictr(\sY)\ra$ for all $\sX, \sY, \sZ \in \mcurv(\std)$. Claim \eqref{sxsysz} follows.
\end{proof}
In particular, Lemma \ref{weakpositivitylemma} shows that, in Euclidean signature, if $\sX \opmult \sX = 0$ then $\sX \in \mcurvweyl(\std)$. That is, a square-zero element of $(\mcurv(\std), \opmult)$ must be contained in $\mcurvweyl(\std)$.
The first claim of Lemma \ref{weakpositivitylemma} is an example of a claim that depends on the assumption that $h$ have definite signature. In other signatures the same proof shows only that $\rictr(\sX)$ is null.

\begin{lemma}\label{mcsubspacelemma}
Let $(\ste, h)$ be a Euclidean vector space and let $\Pi \in \eno(\ste)$ be the $h$-orthogonal projection onto the subspace $\stw \subset \ste$. The linear map $\imt:\tensor^{4}\stw^{\ast} \to \tensor^{4}\std$ defined by $\imt(\sX)(A, B, C, D) = \sX(\Pi A, \Pi B,\Pi C,\Pi D)$ restricts to an injective algebra homomorphism that maps $(\mcurv(\stwd), \opmult)$ into $(\mcurv(\std), \opmult)$ and $(\mcurvweyl(\stw^{\ast}), \opmult)$ into $(\mcurvweyl(\std), \opmult)$.
\end{lemma}

\begin{proof}
Straightforward calculations show that $B(\imt(\sX), \imt(\sY)) = \imt(B(\sX, \sY))$ for $\sX, \sY \in \mcurv(\stw)$. Since $\imt:\tensor^{4}\stw^{\ast} \to \tensor^{4}\std$ commutes with permutations of the factors, this suffices to show $\imt(\sX)\opmult \imt(\sY) = \imt(\sX \opmult \sY)$ for $\sX, \sY \in \mcurv(\stw^{\ast})$. Similarly, $\rictr(\imt(\sX)) = \imt(\rictr(\sX))$ (where $\imt$ is defined on $\tensor^{2}\stw$) and so $\imt(\mcurvweyl(\stw^{\ast})) \subset \mcurvweyl(\std)$.
\end{proof}

\begin{remark}\label{signremark}
As commented in Remark \ref{pullbackremark}, there is no purely algebraic reason to prefer $\opmult$ to its pullback $\opmult_{t}$ via rescaling by $t \in \reat$, as these algebras are isomorphic. In particular there is no algebraic reason to prefer $\opmult$ to $-\opmult = \opmult_{-1}$. However there are at least three aesthetic and geometric reasons for preferring $\opmult$ to $-\opmult$:
\begin{itemize}
\item (geometric) By Lemma \ref{alhbehlemma}, a \emph{positive} multiple of the curvature tensor of the round metric on the sphere is an $\opmult$-idempotent.
\item (aesthetic) By Lemma \ref{weakpositivitylemma}, the scalar curvature of an $\opmult$-square is nonnegative. (Morally, squares should be \emph{positive}.)
\item (aesthetic) For the induced curvature operator on $\mcurv(\std)$, $\sX \opmult \sY = \op{\sX}_{\mcurv(\std)}(\sY)$. 
\end{itemize}
When comparing with the literature, care has to be taken. For example that some signs in \cite{Richard-curvaturecones} at first glance appear inconsistent with those here is because \cite{Richard-curvaturecones} works directly with curvature operators, essentially with what is here called $\op{\sX}_{\ext^{2}\std}$, which is the image under the map here called $\sharp$ of the tensor $-\tfrac{1}{2}\sX_{ijkl}$, whereas here results are stated directly in terms of the tensor $\sX_{ijkl}$. 
\end{remark}
%The headaches generated by choices of signs when working with curvature tensors are familiar. The usual choices involve, first, the definition of the curvature of an affine connection, $\nabla$, here taken to be $2\nabla_{[i}\nabla_{j]}X^{k} = R_{ijp}\,^{k}X^{p}$ and, second, the decision of where to lower via the metric the upper index of the Levi-Civita connection, here taken to be $R_{ijkl} = R_{ijk}\,^{p}g_{pl}$. The definition of the Ricci trace could be $R_{pij}\,^{p}$ or $R_{ipj}\,^{p}$. From an algebraic standpoint there is no obvious reason to prefer one definition to the other, but from a geometric point of view the choice is forced by the requirement that the Ricci tensor of the round metric on the sphere be positive definite. Depending on the sign choices in the definition of the full curvature tensor, this forces one or the other choice for the Ricci tensor. For example the authors preferences oblige defining $\rictr(\sX) = \sX_{pij}\,^{p}$. A metric curvature tensor determines a quadratic form on $\ext^{2}\std$ and so via metric duality an endomorphism of $\ext^{2}\std$, the associated curvature operator, but again there is nothing algebraically canonical about how this is scaled. Geometric considerations dictate that the operator on $\ext^{2}\std$ associated with the curvature tensor of the round metric on the sphere be positive definite, and with the definition of the metric curvature used here this forces a sign in the definition of the associated curvature operator. 

\section{Calculation of products in \texorpdfstring{$(\mcurv(\std), \opmult)$}{}}\label{productssection}
There are two ways to construct curvature tensors from simpler objects, a commutative product $\kwedge$ of symmetric two-tensors and a commutative product $\cdot$ of antisymmetric two-tensors. Lemma \ref{productsmultlemma} records the $\opmult$ products of curvature tensors obtained in these ways. They are used later in the construction of idempotents in $(\mcurvweyl(\std), \opmult)$. Although the full strength of these computations is not needed in this paper, their proofs illustrate the use of $\opmulta$ and $\opmults$ in the computation of $\opmult$.

By straightforward computations using \eqref{preprepair}, Lemma \ref{pairlemma}, and \eqref{mdualstwoprojdual}, 
\begin{align}\label{kwedgedefined}
(\al \kwedge \be)_{ijkl}  = -2\pair(\al, \be)_{ijkl} = \al_{k[i}\be_{j]l} - \al_{l[i}\be_{j]k} = \sqrt{3}\left((\mdual \circ \stwoprojdual)(\al \tensor \be + \be \tensor \al)\right)_{ijkl}
\end{align}
defines a symmetric bilinear map $\kwedge:S^{2}(\std) \times S^{2}(\std) \to \mcurv(\std)$. When $k = 2$, $\kwedge$ is half what is usually called the Kulkarni-Nomizu product.
Again by \eqref{preprepair}, for $\al, \be \in \ext^{2}\std$, $\prepair(\al, \be)_{ijkl} = \tfrac{1}{2}\left( \al_{k[i}\be_{j]l} - \al_{l[i}\be_{j]k}\right)$, so that, by Lemma \ref{pairlemma}, an $O(h)$-equivariant symmetric bilinear map $\cdot:\ext^{2}(\std) \times \ext^{2}(\std) \to \mcurv(\std)$ is defined by
\begin{align}\label{cdotdefined}
\begin{split}
\al \cdot \be &= -6\pair(\al, \be)_{ijkl} = -6\stwoproj(\prepair(\al, \be))_{ijkl} = -3\left( \al_{k[i}\be_{j]l} - \al_{l[i}\be_{j]k} - 2\al_{[ij}\be_{kl]}\right)\\
&= \tfrac{3}{2}(\al_{ij}\be_{kl} + \al_{kl}\be_{ij}) - \tfrac{1}{2}(\al \dwedge \be)_{ijkl}
= \tfrac{3}{2}\stwoproj(\al \tensor \be + \be \tensor \al)_{ijkl}.
%= \al_{ij}\be_{kl} + \al_{kl}\be_{ij} - \al_{k[i}\be_{j]l} + \al_{l[i}\be_{j]k}.
\end{split}
\end{align}
For a commutative algebra $(\alg, \mlt)$ define 
\begin{align}
&\qp_{\mlt}(x, y) = 2L_{\mlt}(x)\jmult L_{\mlt}(y) - L_{\mlt}(x\mlt y), & &\qp_{\mlt}(x) = \qp_{\mlt}(x, x) = 2L_{\mlt}(x)^{2} - L_{\mlt}(x\mlt x).
\end{align}
For a Jordan algebra, $\qp_{\mlt}(x)$ is what is usually called the \emph{quadratic representation} \cite{Jacobson-jordan, Koecher}. For the special case of $\al, \be, \ga \in \tensor^{2}\std$, 
\begin{align}
\label{qpid0}&\qp_{\jmult}(\al, \be)(\ga) = \tfrac{1}{2}(\al \circ \ga \circ \be + \be \circ \ga \circ \al), && \qp_{\jmult}(\al)(\ga) = \al \circ \ga \circ \al.
\end{align}
The following identities relate $\qp_{\jmult}$ with $\jmult$ and $[\dum, \dum]$.
\begin{align}
\label{qpid1}
\begin{aligned}
\qp_{\jmult}(\al \jmult \be) &+ \tfrac{1}{4}\qp_{\jmult}([\al, \be]) = \tfrac{1}{2}\qp_{\jmult}(\al\circ \be) + \tfrac{1}{2}\qp_{\jmult}(\be \circ\al),\\
\qp_{\jmult}(\al \jmult \be) &- \tfrac{1}{4}\qp_{\jmult}([\al, \be]) =\qp_{\jmult}(\al) \jmult \qp_{\jmult}(\be).
\end{aligned}
\end{align}
%In particular, for $\al, \be \in \ext^{2}\std$, $\al \cdot \be$ is simply the orthogonal projection of $\tfrac{3}{2}(\al \tensor \be + \be \tensor \al) \in S^{2}(\ext^{2}\std)$ on $\mcurv(\std)$.
Define $\tr:S^{k}\std \to S^{k-2}\std$ by $\tr(\om)_{i_{1}\dots i_{k-2}} = \om_{i_{1}\dots i_{k-2}p}\,^{p}$.
For $\al \in \tensor^{2}\std$, let $\al^{\sharp} = \lb \al, \dum\ra \in \tensor^{2}\ste$, so that, in the notation of \eqref{sharp4defined}, $2(\al \sprod \be)^{\sharp} = \al \tensor \be^{\sharp} + \be \tensor \al^{\sharp}$, where $\sprod$ is the symmetrized tensor product of elements of $\tensor^{2}\std$. From the definitions there follow
\begin{align}\label{symopalbega}
&\begin{aligned}
&\op{\ga \kwedge \si}_{S^{2}\std} = \qp_{\jmult}(\ga, \si) -(\ga\sprod \si)^{\sharp} ,\\
&\op{(\ga \kwedge h)}_{S^{2}\std}(\tau) = \ga \jmult \tau - \tfrac{1}{2}\tr(\tau)\ga- \tfrac{1}{2}\lb \ga, \tau \ra h,\\
&\op{h \kwedge h}_{S^{2}\std}(\tau) = \tau - \tr(\tau)h,
\end{aligned}&&\ga, \si, \tau \in S^{2}(\std),\\
\label{albega}
&\begin{aligned}
\op{(\al \cdot \be)}_{\ext^{2}\std}   =  \qp_{\jmult}(\al, \be) - (\al\sprod \be)^{\sharp} , \end{aligned}&&\al, \be \in \ext^{2}(\std),\\
\label{gasiopal}
&\begin{aligned}
&\op{\ga \kwedge \si}_{\ext^{2}\std} = -\qp_{\jmult}(\ga, \si) , &&\op{\si \kwedge h}_{\ext^{2}\std}(\al) = - \si \jmult \al,\\
&\op{\al \cdot \be}_{S^{2}\std} =3\qp_{\jmult}(\al, \be),&& \op{\al \cdot \be}_{S^{2}\std}(h) =3\al \jmult \be,
\end{aligned}&& \begin{aligned}\ga, \si \in S^{2}\std,\\\al, \be \in\ext^{2}\std. \end {aligned}
\end{align}
By \eqref{curvopinjective}, \eqref{kwedgedefined}, and \eqref{cdotdefined}, for $\sX \in \mcurv(\std)$, $\ga, \si \in S^{2}\std$, and $\al, \be \in \ext^{2}\std$,
\begin{align}\label{sxalwedgebe}
\lb \ga \kwedge \si , \sX\ra & = -2 \lb \pair(\ga, \si), \sX\ra = -2\lb \op{\sX}(\ga), \si \ra = -2\lb \ga, \op{\sX}(\si)\ra,\\
\label{sxalcdotbe}
\lb \al \cdot \be , \sX\ra & = -6 \lb \pair(\al, \be), \sX\ra = -6\lb \op{\sX}(\al), \be \ra = -6\lb \al, \op{\sX}(\be)\ra.
\end{align}
For $\al, \be, \ga, \si \in \ext^{2}(\std)$, by \eqref{sxalcdotbe} and \eqref{albega}, 
\begin{align}\label{alcdotbegacdotsinorm}
\begin{split}
\lb \al \cdot \be, \ga \cdot \si \ra&  = - 6\lb \op{\al\cdot \be}_{\ext^{2}\std}(\ga), \si\ra \\
& = 3\left(\lb \al, \ga \ra\lb \be, \si \ra +  \lb \al, \si \ra \lb \be, \ga \ra + \tr(\al\circ \ga \circ \be \circ \si+ \al\circ \si \circ \be \circ \ga)\right).
\end{split}
\end{align}
For $\al, \be, \ga, \si \in S^{2}\std$, by \eqref{sxalwedgebe} and \eqref{symopalbega},
\begin{align}\label{alwedgebegawedgesinorm}
\begin{split}
\lb \al \kwedge \be, \ga \kwedge \si\ra &= -2\lb \op{\al \kwedge \be}_{S^{2}\std}(\ga), \si\ra \\
&= \lb \al, \ga \ra\lb \be, \si \ra +  \lb \al, \si \ra \lb \be, \ga \ra - \tr(\al\circ \ga \circ \be \circ \si + \be \circ \ga\circ  \al\circ \si).
\end{split}
\end{align}
For $\al, \be \in \ext^{2}\std$ and $\ga, \si \in S^{2}\std$, by \eqref{gasiopal} together with \eqref{sxalwedgebe} or \eqref{sxalcdotbe},
\begin{align}\label{alcdotbegawedgesinorm}
\begin{split}
\lb \al \cdot \be, \ga \kwedge \si \ra = -3\tr(\ga \circ \al \circ \si \circ \be + \si\circ \al \circ \ga\circ \be).
\end{split}
\end{align}
Using \eqref{mdualstwoprojdual} it is straightforward to check that for $\si \in S^{2}\std$ and $\tau \in \ext^{2}\std$, there hold
\begin{align}
\label{tabidentity2}
&\si_{i[k}\si_{l]j}  = \tfrac{1}{2}(\si \kwedge \si)_{ijkl},& &\implies& & -2\stwoproj((\qp_{\jmult}(\si)_{\ext^{2}\std})^{\flat}) = -\si \kwedge \si,\\
\label{tabidentity}
&\tau_{i[k}\tau_{l]j} - \tau_{[ik}\tau_{lj]} = -\tfrac{1}{6}(\tau\cdot \tau)_{ijkl}, &  &\implies&&-2\stwoproj((\qp_{\jmult}(\tau)_{\ext^{2}\std})^{\flat}) = \tfrac{1}{3}\tau \cdot \tau,\\
\label{symtabidentity}
&\si_{i(k}\si_{l)j} - \si_{(ik}\si_{lj)} = -\tfrac{\sqrt{3}}{6}\mdual^{-1}(\si \kwedge \si)_{ijkl},& &\implies&& -\tfrac{2}{\sqrt{3}}\mdual \circ \stwoprojdual((\qp_{\jmult}(\si)_{S^{2}\std})^{\flat}) = \tfrac{1}{3}\si \kwedge \si,\\
\label{symtabidentity2}
&\tau_{i(k}\tau_{l)j} =-\tfrac{\sqrt{3}}{6}\mdual^{-1}(\tau \cdot \tau)_{ijkl},& &\implies&&-\tfrac{2}{\sqrt{3}}\mdual \circ \stwoprojdual((\qp_{\jmult}(\tau)_{S^{2}\std})^{\flat}) =\tfrac{1}{3}\tau \cdot \tau.
\end{align}

\begin{lemma}\label{productsmultlemma}
Let $(\ste, h)$ be a metric vector space. For $\al, \be, \ga, \si \in S^{2}(\std)$, 
\begin{align}
\label{symalbecmgasi}
\begin{split}
(\al &\kwedge \be) \opmult (\ga \kwedge \si)  = -\tfrac{1}{4}\left(\lb \al, \ga\ra \be \kwedge \si + \lb \al, \si \ra \be \kwedge \ga + \lb \be, \ga \ra \al \kwedge \si + \lb \be, \si \ra \al \kwedge \ga \right)\\
&\quad + \tfrac{1}{2}\left(\al \kwedge \qp_{\jmult}(\ga, \si)(\be) 
+\be \kwedge \qp_{\jmult}(\ga, \si)(\al) 
+\ga \kwedge \qp_{\jmult}(\al, \be)(\si) 
+\si \kwedge \qp_{\jmult}(\al, \be)(\ga)\right) \\
&\quad - \tfrac{1}{8}\left([\al, \ga]\cdot [\be, \si] + [\al, \si] \cdot [\be, \ga] \right) - \tfrac{1}{2}\left((\al \jmult \ga) \kwedge (\be \jmult \si) + (\al \jmult \si) \kwedge (\be \jmult \ga) \right).
\end{split}
\end{align}
For $\al, \be \in \ext^{2}\std$ and $\ga, \si \in S^{2}(\std)$,
\begin{align}
\label{cdw}
\begin{split}
(\al \cdot \be) &\opmult (\ga \kwedge \si)  \\
&= -\tfrac{1}{2}\left(\al \cdot \qp_{\jmult}(\ga, \si)(\be) + \be \cdot \qp_{\jmult}(\ga, \si)(\al) \right)
 + \tfrac{3}{2}\left(\ga \kwedge \qp_{\jmult}(\al, \be)(\si)  + \si \kwedge \qp_{\jmult}(\al, \be)(\ga)\right)\\
& +\tfrac{1}{2}\left( (\al \jmult \ga)\cdot (\be \jmult \si)  +  (\al \jmult \si)\cdot (\be \jmult \ga)\right)
-\tfrac{3}{8}\left([\ga, \al]\kwedge[\si, \be] + [\si, \al]\kwedge [\ga, \be]\right).
\end{split}
\end{align}
For $\al, \be, \ga, \si \in \ext^{2}(\std)$,
\begin{align}
\label{albecmultgasi}
\begin{split}
(\al \cdot \be)&\opmult(\ga \cdot \si)  =
-\tfrac{1}{4}\left(\lb \al, \ga \ra \be \cdot \si + \lb \al, \si \ra \be \cdot \ga +\lb \be, \ga \ra \al \cdot \si + \lb \be, \si \ra \al \cdot \ga\right)\\
&\quad +\tfrac{1}{2}\left(\qp_{\jmult}(\al, \be)(\ga)\cdot \si+\ga \cdot \qp_{\jmult}(\al, \be)(\si) +\qp_{\jmult}(\ga, \si)(\al) \cdot \be+ \al \cdot \qp_{\jmult}(\ga, \si)(\be) \right) \\
&\quad - \tfrac{5}{8}\left([\al, \ga] \cdot [\be, \si]+[\be, \ga] \cdot [\al, \si]\right)  
+  \tfrac{3}{2}\left( (\al \jmult \ga)\kwedge (\be \jmult \si) +(\be \jmult \ga)\kwedge (\al \jmult \si)\right).
\end{split}
\end{align}
\end{lemma}

\begin{proof}
For $\al, \be \in S^{2}\std$, by \eqref{gasiopal}, \eqref{symopalbega}  and \eqref{qpid1}, 
\begin{align}\label{oms1}
&\op{\al \kwedge \al}_{\ext^{2}\std}\jmult \op{\be \kwedge \be}_{\ext^{2}\std} = \qp_{\jmult}(\al)\jmult \qp_{\jmult}(\be) = \qp_{\jmult}(\al \jmult \be) - \tfrac{1}{4}\qp_{\jmult}([\al, \be]).\\
\label{oms2}
\begin{split}
&\op{\al \kwedge \al}_{S^{2}\std}\jmult \op{\be \kwedge \be}_{S^{2}\std} = (\qp_{\jmult}(\al) - (\al \tensor \al)^{\sharp})\jmult (\qp_{\jmult}(\be) - (\be \tensor \be)^{\sharp})\\
& = \qp_{\jmult}(\al \jmult \be) - \tfrac{1}{4}\qp_{\jmult}([\al, \be]) + \lb\al, \be\ra(\al\sprod \be)^{\sharp}- \left(\qp_{\jmult}(\al)(\be)\sprod \be\right)^{\sharp} -  \left(\qp_{\jmult}(\be)(\al)\sprod \al\right)^{\sharp}.
\end{split}
\end{align}
By \eqref{opmultasdefined2}, applying $-2\stwoproj$ to \eqref{oms1} and $-\tfrac{2}{\sqrt{3}}\mdual \circ \stwoprojdual$ to \eqref{oms2}, and using \eqref{cdotdefined} and \eqref{tabidentity2}-\eqref{symtabidentity2} with $\si  = \al \jmult \be$ and $\tau = [\al, \be]$ to simplify the results yields
\begin{align}\label{opmultasalbe}
\begin{aligned}
(\al \kwedge \al) \opmulta (\be \kwedge \be) &=-(\al\jmult \be) \kwedge (\al\jmult \be) - \tfrac{1}{12}[\al, \be]\cdot [\al, \be],\\
(\al \kwedge \al) \opmults (\be \kwedge \be)  &= \tfrac{2}{3}\left(\al \kwedge\qp_{\jmult}(\be)(\al)+\be \kwedge\qp_{\jmult}(\al)(\be) - \lb \al, \be\ra(\al \kwedge \be)\right) \\
&\quad + \tfrac{1}{3}(\al\jmult\be) \kwedge (\al\jmult\be) - \tfrac{1}{2}[\al, \be]\cdot[\al, \be].
\end{aligned}
\end{align}
Using $\opmult = \tfrac{3}{2}(\opmulta + \opmults)$ and \eqref{opmultasalbe} to evaluate $(\al \kwedge \al) \opmult (\be \kwedge \be)$ yields 
\begin{align}
\label{symalalcmbebe}
\begin{split}
(\al \kwedge \al) \opmult (\be \kwedge \be) & 
= -\lb \al, \be \ra \al \kwedge \be  + \al \kwedge \qp_{\jmult}(\be)(\al) + \be \kwedge \qp_{\jmult}(\al)(\be)\\
&\quad  -\tfrac{1}{4} [\al, \be] \cdot [\al, \be]  - (\al \jmult \be) \kwedge (\al\jmult \be) .
\end{split}
\end{align}
Polarizing \eqref{symalalcmbebe} first in $\al$ then in $\be$ yields \eqref{symalbecmgasi}. 

For $\al \in \ext^{2}\std$ and $\ga \in S^{2}\std$, by \eqref{albega}, \eqref{gasiopal}, \eqref{symopalbega}, and \eqref{qpid1},
\begin{align}\label{oms1b}
\begin{split}
\op{\al \cdot \al}_{\ext^{2}\std}\jmult \op{\ga \kwedge \ga}_{\ext^{2}\std}&=  (\qp_{\jmult}(\al) - (\al \tensor \al)^{\sharp})\jmult \qp_{\jmult}(\ga) \\&= - \qp_{\jmult}(\al\jmult \ga) + \tfrac{1}{4}\qp_{\jmult}([\al, \ga]) + \left(\qp_{\jmult}(\ga)(\al)\sprod \al\right)^{\sharp} ,
\end{split}\\
\label{oms2b}
\begin{split}
\op{\al \cdot \al}_{S^{2}\std}\jmult \op{\ga \kwedge \ga}_{S^{2}\std} &= 3\qp_{\jmult}(\al)  \jmult \left( \qp_{\jmult}(\ga) - (\ga \tensor \ga)^{\sharp}\right) \\&= 3 \qp_{\jmult}(\al\jmult \ga) - \tfrac{3}{4}\qp_{\jmult}([\al, \ga]) - 3\left(\qp_{\jmult}(\al)(\ga)\sprod \ga\right)^{\sharp}.
\end{split}
\end{align}
By \eqref{opmultasdefined2}, applying $-2\stwoproj$ to \eqref{oms1b} and $-\tfrac{2}{\sqrt{3}}\mdual \circ \stwoprojdual$ to \eqref{oms2b} and using \eqref{cdotdefined}, \eqref{kwedgedefined}, and 
\eqref{tabidentity2}-\eqref{symtabidentity2} with $\si  = [\al, \ga]$ and $\tau = \al \jmult \ga$ to simplify the results yields
\begin{align}\label{opmultasalalgaga}
\begin{aligned}
(\al \cdot \al) \opmulta (\ga\kwedge \ga) &= - \tfrac{2}{3}\al \cdot \qp_{\jmult}(\ga)(\al) - \tfrac{1}{3}(\al \jmult \ga)\cdot (\al \jmult \ga) - \tfrac{1}{4}[\al, \ga] \kwedge [\al, \ga],\\
(\al \cdot \al) \opmults (\ga\kwedge \ga) &= 2\ga \kwedge \qp_{\jmult}(\al)(\ga) +  (\al \jmult \ga)\cdot (\al \jmult \ga) - \tfrac{1}{4}[\al, \ga] \kwedge [\al, \ga].
\end{aligned}
\end{align}
Using $\opmult = \tfrac{3}{2}(\opmulta + \opmults)$ and\eqref{opmultasalalgaga} to evaluate $(\al \cdot \al) \opmult (\ga \kwedge \ga)$ yields 
\begin{align}
\label{cdwpre}
\begin{split}
(\al \cdot \al) &\opmult (\ga \kwedge \ga)  = 
-\al \cdot \qp_{\jmult}(\ga)(\al) + 3\ga \kwedge\qp_{\jmult}(\al)(\ga) + (\al \jmult \ga)\cdot (\al \jmult \ga) - \tfrac{3}{4}[\al, \ga] \kwedge [\al, \ga].
\end{split}
\end{align}
Polarizing \eqref{cdwpre} first in $\al$ then in $\ga$ yields \eqref{cdw}. 

For $\al, \be \in \ext^{2}\std$, by \eqref{albega}, \eqref{gasiopal}, and \eqref{qpid1},
\begin{align}\label{oms1c}
\begin{split}
&\op{\al \cdot \al}_{\ext^{2}\std}\jmult \op{\be \cdot \be}_{\ext^{2}\std} = (\qp_{\jmult}(\al) - (\al \tensor \al)^{\sharp})\jmult(\qp_{\jmult}(\be) - (\be \tensor \be)^{\sharp})\\
& = \qp_{\jmult}(\al \jmult \be) - \tfrac{1}{4}\qp_{\jmult}([\al, \be])  + \lb \al, \be \ra \left(\al \sprod \be\right)^{\sharp} -\left( \qp_{\jmult}(\be)(\al) \sprod \al\right)^{\sharp} -\left( \qp_{\jmult}(\al)(\be) \sprod \be\right)^{\sharp} ,
\end{split}\\
\label{oms2c}
\begin{split}
&\op{\al \cdot \al}_{S^{2}\std}\jmult \op{\be \cdot \be}_{S^{2}\std} = 9\qp_{\jmult}(\al)\jmult \qp_{\jmult}(\be) = 9\qp_{\jmult}(\al \jmult \be) - \tfrac{9}{4}\qp_{\jmult}([\al, \be]).
\end{split}
\end{align}
By \eqref{opmultasdefined2}, applying $-2\stwoproj$ to \eqref{oms1c} and $-\tfrac{2}{\sqrt{3}}\mdual \circ \stwoprojdual$ to \eqref{oms2c} and using \eqref{cdotdefined} and \eqref{tabidentity2}-\eqref{symtabidentity2} with $\tau = [\al, \be]$ and $\si  = \al \jmult \be$ to simplify the results yields
\begin{align}\label{opmultasalalbebecdot}
\begin{aligned}
(\al \cdot \al) \opmulta (\be\cdot \be) &= \tfrac{2}{3}\left(\al \cdot \qp_{\jmult}(\be)(\al) + \be \cdot \qp_{\jmult}(\al)(\be)  - \lb \al, \be\ra\al \cdot \be\right) \\
&\quad - (\al \jmult \be)\kwedge (\al \jmult \be)   - \tfrac{1}{12}[\al, \be] \cdot [\al, \be],\\
(\al \cdot \al) \opmults (\be\cdot\be) &=3(\al \jmult \be)\kwedge (\al \jmult \be) -\tfrac{3}{4} [\al, \be] \cdot [\al, \be].
\end{aligned}
\end{align}
Using $\opmult = \tfrac{3}{2}(\opmulta + \opmults)$ and \eqref{opmultasalalbebecdot} to evaluate $(\al \cdot \al) \opmult (\be\cdot\be)$ yields 
\begin{align}
\label{alalcmultbebe}
\begin{split}
(\al \cdot \al) \opmult (\be\cdot \be)  &
= -\lb \al, \be\ra\al \cdot \be + \al \cdot \qp_{\jmult}(\be)(\al) +\be \cdot \qp_{\jmult}(\al)(\be)  
\\& \quad 
+3(\al \jmult \be)\kwedge (\al \jmult \be)  - \tfrac{5}{4}[\al, \be] \cdot [\al, \be].
\end{split}
\end{align}
Polarizing \eqref{alalcmultbebe} first in $\al$ then in $\be$ yields \eqref{albecmultgasi}. 
\end{proof}

\begin{remark}
Taking $\be = \al\in S^{2}\std$ in \eqref{symalalcmbebe} yields 
\begin{align}
\label{symalalcmalal}
\begin{split}
(\al \kwedge \al) &\opmult (\al \kwedge \al)  = -|\al|^{2} \al \kwedge \al + 2\al \kwedge (\al\circ \al \circ \al) - (\al \circ \al) \kwedge (\al\circ \al).
\end{split}
\end{align}
Taking $\ga = \al\in S^{2}\std$ and $\si = \be\in S^{2}\std$ in \eqref{symalbecmgasi} yields 
\begin{align}
\label{symalbecmalbe}
\begin{split}
(\al &\kwedge \be) \opmult (\al \kwedge \be)  
= -\tfrac{1}{4}|\al|^{2} \be \kwedge \be - \tfrac{1}{4}|\be|^{2}\al \kwedge \al - \tfrac{1}{2}\lb \al, \be \ra \al \kwedge \be 
 +\tfrac{1}{8} [\al, \be] \cdot [\al, \be] 
\\&\quad + \al \kwedge (\al\jmult(\be \circ \be)) + \be \kwedge (\be \jmult(\al \circ \al))
- \tfrac{1}{2}\left((\al \circ \al) \kwedge (\be \circ \be) + (\al \jmult \be) \kwedge (\al\jmult \be) \right).
\end{split}
\end{align}
The identities \eqref{symalalcmalal} and \eqref{symalbecmalbe} suggest that, with further conditions on $\al$ and $\be$, $\opmult$-idempotents can be constructed from $\al \kwedge \al$, $\be \kwedge \be$, and $\al \kwedge \be$. This is shown to be the case in Lemma \ref{projectionkwedgelemma}.
Similarly, taking $\al = \be = \ga = \si \in \ext^{2}\std$ in \eqref{albecmultgasi} yields 
\begin{align}
\label{alalsquared}
\begin{split}
(\al \cdot \al) \opmult (\al \cdot \al) 
&= -|\al|^{2}\al \cdot \al + 2\al \cdot (\al \circ \al \circ \al) + 3(\al \circ \al) \kwedge (\al \circ \al).
\end{split}
\end{align}
The identity \eqref{alalsquared} suggests that with some further condition on $\al$, the element $\al \cdot \al$, or its trace-free part $\tf(\al \cdot \al)$ might be a $\opmult$-idempotent. Corollary \ref{kwedgeidempotentcorollary} and Lemma \ref{weylidempotentslemma} shows that this works. In this regard see also Lemma \ref{indefiniteszlemma} which uses \eqref{alalcmultbebe} and \eqref{alalsquared} to construct square-zero elements in $(\mcurvweyl(\std), \opmult)$ when $h$ has indefinite signature.
\end{remark}

\section{Simplicity of \texorpdfstring{$(\mcurvweyl(\std), \opmult)$}{} and fusion rules for \texorpdfstring{$(\mcurv(\std), \opmult)$}{}}\label{fusionsection}
Lemma \ref{opweylsubalgebralemma} shows that the Weyl curvature tensors constitute a subalgebra of $(\mcurv(\std), \opmult)$. The first part of the section is dedicated to constructing idempotents in $(\mcurvweyl(\std), \opmult)$. This is used in the proof of Theorem \ref{einsteintheorem} and to show the nontriviality of the subalgebra $(\mcurvweyl(\std), \opmult)$, which in turn yields Corollary \ref{mcurvweylsimplecorollary} showing the simplicity of $(\mcurvweyl(\std), \opmult)$ when $\dim \ste > 4$, but is also interesting in its own right, as experience with Jordan and axial algebras suggests that detailed information about idempotents and the spectra of their multiplication endomorphisms is useful for understanding the internal structure of an algebra such as $(\mcurvweyl(\std), \opmult)$. 

Next there are deduced explicit formulas for products in $(\mcurv(\std), \opmult)$ and these are used to deduce the main result of this section, Theorem \ref{fusiontheorem}, that describes the interaction of the subspaces $\mcurvscal(\std)$, $\mcurvric(\std)$, and $\mcurvweyl(\std)$ with respect to $\opmult$. 

\begin{lemma}
Let $(\ste, h)$ be an $n$-dimensional Euclidean vector space. 
For an $O(n)$-module of tensors $\stw$, let $\tf\in \eno(\stw)$ denote the orthogonal projection onto the $O(n)$-submodule $\stw_{0} \subset \stw$ comprising trace-free elements. 
Define $S^{2}_{0}(\std) = \{\al \in S^{2}(\std): \tr\al = 0\}$, $\mcurvric(\std) = \{h \kwedge \al: \al \in S^{2}_{0}(\std)\}\subset \mcurv(\std)$, and $\mcurvscal(\std) = \spn\{h \kwedge h\} \subset \mcurv(\std)$. The orthogonal projections $\projric, \projscal \in \eno(\mcurv(\std))$ on $\mcurvric(\std)$ and $\mcurvscal(\std)$ are given by 
\begin{align}\label{projricdefined}
&\projric(\sX) = -\tfrac{2}{n-2}\rictrz(\sX) \kwedge h, && \projscal(\sX)  = -\tfrac{1}{n(n-1)}\scal(\sX)h \kwedge h,
\end{align}
where $\rictrz(\sX) = \rictr(\sX) - \tfrac{1}{n}\scal(\sX)h$, and the trace-free part $\tf(\sX)$ of $\sX \in \mcurv(\std)$ is given by 
\begin{align}\label{tfsy}
\begin{split}
\tf(\sX)  %= \sX - \projric(\sX)- \projscal(\sX) 
&= \sX + \tfrac{2}{n-2} \rictrz(\sX)\kwedge h + \tfrac{1}{n(n-1)}\scal(\sX) h\kwedge h = \sX + \tfrac{2}{n-2} \rictr(\sX)\kwedge h - \tfrac{1}{(n-2)(n-1)}\scal(\sX) h\kwedge h.
\end{split}
\end{align}
\end{lemma}
\begin{proof}
For $\al, \be \in S^{2}(\std)$, computations using the definitions show
\begin{align}\label{rictralkwedgebe}
&\rictr(\al \kwedge \be)  = \al \jmult \be -\tfrac{1}{2} \tr(\al)\be -\tfrac{1}{2} \tr(\be) \al,&
&\scal(\al \kwedge \be)  = \lb \al, \be \ra - \lb \tr\al, \tr \be \ra,\\
\label{rictralh}
&\rictr(\al \kwedge h) = \tfrac{2 - n}{2}\al - \tfrac{1}{2}\tr(\al)h ,&
& \scal(\al \kwedge h) = (1-n)\tr \al,\\
\label{rictrhh}
&\rictr(h \kwedge h) = (1-n)h, & &\scal(h \kwedge h) = -n(n-1).
\end{align}
When $n >2$, $\mcurv(\std) = \mcurvweyl(\std) \oplus \mcurvric(\std) \oplus \mcurvscal(\std)$ is an orthogonal decomposition into irreducible $O(n)$-modules (although $\mcurvweyl(\std)$ is trivial if $\dim \ste = 3$). 
By \eqref{rictralh}, if $n > 2$, the map $\al \to h \kwedge \al$ is a linear isomorphism from $S^{2}_{0}(\std)$ onto its image in $\mcurv(\std)$, which is $\mcurvric(\std)$. The expressions \eqref{projricdefined} and \eqref{tfsy} follow from \eqref{rictralh} and \eqref{rictrhh}.
\end{proof}

\begin{example}
For $\al, \be \in S^{2}\std$, taking $\sX = \al \kwedge \be$ in \eqref{tfsy} and using \eqref{rictralkwedgebe} yields
\begin{align}\label{tfalkwedgebe}
\begin{split}
\tf(\al &\kwedge \be)  = \al \kwedge \be
+ \tfrac{1}{n-2}\left(2\al \jmult \be - \tr(\al)\be - \tr(\be)\al + \tfrac{1}{n-1}\left(\tr(\al)\tr(\be) - \lb \al, \be \ra\right)h\right) \kwedge h.
\end{split}
\end{align}
Similarly, by \eqref{tfsy} and \eqref{rictralcdotbe}, for $\al, \be \in \ext^{2}(\std)$,
\begin{align}\label{rictralcdotbe}
&\rictr(\al \cdot \be)=  3 \al \jmult \be,\qquad  \scal(\al \cdot \be)  =  -3\lb \al, \be \ra,\\
\label{tfalcdotbe}
\begin{split}
\tf(\al \cdot \be)
& = \al \cdot \be + \tfrac{6}{n-2}(\al \jmult \be) \kwedge h + \tfrac{3}{(n-1)(n-2)}\lb \al, \be \ra h \kwedge h.
\end{split}
\end{align}
Since, by \eqref{sxalwedgebe}, $\lb \sX, h \kwedge h \ra = -2\scal(\sX)$ for any $\sX \in \mcurv(\std)$, by \eqref{rictralcdotbe} there holds $\lb \al \cdot \be, h \kwedge h\ra = 6\lb \al, \be \ra$.
Alternatively, this is a special case of \eqref{sxalcdotbe} or a special case of \eqref{alcdotbegawedgesinorm}.
\end{example}

\begin{lemma}\label{indefiniteszlemma}
Let $(\ste, h)$ be a metric vector space of dimension $n > 3$. If $h$ has indefinite signature with minimal inertial index $k \geq 1$, then $(\mcurvweyl(\std), \opmult)$ is spanned by square-zero elements and contains a trivial subalgebra of dimension $n- 2k - 1$.
\end{lemma}

\begin{proof}
By assumption there are a nonzero $h$-isotropic $w \in \std$ and a unimodular $h$-orthogonal basis $\{z^{(\al)}:0 \leq \al \leq n-2k-1\}$ of a codimension $2k$ subspace of $\std$ orthogonal to $w$ and on which $h$ has definite signature. Concretely, there is $\ep \in \{\pm 1\}$ such that $h(w, z^{(\al)}) = 0$ and $h(z^{(\al)}, z^{(\al)}) = \ep$ for $0 \leq \al \leq n-2k-1$. For $1 \leq \al \leq n- 2k-1$, the tensor $\sZ^{(\al)} = (w \dwedge z^{(0)})\cdot (w \dwedge z^{(0)}) - (w \dwedge z^{(\al)})\cdot (w \dwedge z^{(\al)})$ is nontrivial. Suppose $1 \leq \al < \be \leq n - 2k -1$. Because $(w\dwedge z^{(\al)}) \circ (w\dwedge z^{(\al)}) = -\ep w \tensor w = (w \dwedge z^{(\be)})\tensor (w \dwedge z^{(\be)})$, by \eqref{rictralcdotbe}, $\rictr(\sZ^{(\al)}) =0$, so $\sZ^{(\al)} \in \mcurvweyl(\std)$. Because $(w \dwedge z^{(\al)})\circ (w \dwedge z^{(\al)}) \circ (w\dwedge z^{(\al)}) = 0$ and $(w\tensor w)\kwedge (w \tensor w) = 0$, by \eqref{alalsquared}, $((w \dwedge z^{(\al)})\tensor (w \dwedge z^{(\al)}))\opmult ((w \dwedge z^{(\al)})\tensor (w \dwedge z^{(\al)})) = 0 = ((w \dwedge z^{(\be)})\tensor (w \dwedge z^{(\be)}))\opmult ((w \dwedge z^{(\be)})\tensor (w \dwedge z^{(\be)}))$, and, because $(w \dwedge z^{(\al)})\circ (w \dwedge z^{(\be)}) = 0$,by \eqref{alalcmultbebe}, $((w \dwedge z^{(\al)})\tensor (w \dwedge z^{(\al)}))\opmult ((w \dwedge z^{(\be)})\tensor (w \dwedge z^{(\be)})) = 0$, so $\sZ^{(\al)} \opmult \sZ^{(\al)} = 0$ and $\sZ^{(\al)} \opmult \sZ^{(\be)} = 0$. It follows that $\spn\{\sZ^{(\al)}:1 \leq \al \leq n-2k-1\}$ is a trivial subalgebra of $(\mcurvweyl(\std), \opmult)$.

Since $(\mcurvweyl(\std), \opmult)$ contains a nonzero square-zero element $\sZ$, the span of the $O(h)$-orbit of $\sZ$ is a nontrivial $O(h)$-invariant subspace of the irreducible $O(h)$-module $\mcurvweyl(\std)$, so equals $\mcurvweyl(\std)$, and hence $\mcurvweyl(\std)$ is spanned by square-zero elements.
\end{proof}

Let $\idem(\alg, \mlt)$ denote the set of idempotent elements in the algebra $(\alg, \mlt)$. Two idempotents $e,f \in \idem(\alg, \mlt)$ are \emph{orthogonal} if $e\mlt f = 0 = f\mlt e$.

If $\al \in \idem(S^{2}\std, \jmult)$, then $\tr \al = |\al|^{2}$ is the rank of the orthogonal proejction $\al_{i}\,^{j}$, so $\tr \al$ is said to be the \emph{rank} of $\al$. If $\al, \be \in \idem(S^{2}\std, \jmult)$ are orthogonal idempotents, then $\al \circ \be = 0 = \be \circ \al$, for $\al \circ \be = \al \circ \al \circ \be \circ \be = \be \circ \be \circ \al \circ \al = \be \circ \al = -\al \circ \be$. 

\begin{lemma}\label{projectionkwedgelemma}
Let $(\ste, h)$ be an $n$-dimensional Euclidean vector space.  For orthogonal idempotents $\al, \be \in \idem(S^{2}\std, \jmult)$ with $a = \tr \al$ and $b = b$ there hold
\begin{align}\label{hbidempotent}
\begin{aligned}
&(\be \kwedge \be) \opmult (\be \kwedge \be) = (1 - b)\be \kwedge \be,&
&(\al\kwedge \be)\opmult(\al\kwedge \be) = -\tfrac{1}{2}(\al\kwedge \be) - \tfrac{b}{4}\al\kwedge \al - \tfrac{a}{4}\be \kwedge \be,&
 \\
%\\
&(\be\kwedge\be)\opmult (\be\kwedge\al) = \tfrac{1-b}{2}\al\kwedge\be,&
&(\al\kwedge \al)\opmult (\be \kwedge \be)  = 0,&
\end{aligned}
\end{align}
%and the same relations with $\al$ and $\be$ interchanged.
Moreover, $|\be \kwedge \be|^{2} = 2(b - 1 )b$, so $\be \kwedge \be \neq 0$ if and only if $b \neq 1$. In this case $\tfrac{1}{1-b}\be \kwedge \be \in \idem(\mcurv(\std), \opmult)$. If $a \neq 1$ and $b \neq 1$, then $\tfrac{1}{1-b}\be \kwedge \be$ and $\tfrac{1}{1-a}\al\kwedge \al$ are orthogonal idempotents in $(\mcurv(\std), \opmult)$.
\end{lemma}
\begin{proof}
That $|\be \kwedge \be|^{2} = 2(b - 1 )b$ follows from \eqref{alwedgebegawedgesinorm}. (Note that $b = 1$ if and only if $\be = u\tensor u$ for a unit norm $u \in \std$, in which case $\be \kwedge \be = 0$.) 
Specializing \eqref{symalbecmgasi}, \eqref{symalalcmalal}, and \eqref{symalbecmalbe} yields \eqref{hbidempotent} and the remaining claims. 
\end{proof}

\begin{corollary}\label{kwedgeidempotentcorollary}
Let $(\ste, h)$ be a Euclidean vector space of dimension $n > 2$.  If orthogonal idempotents $\al, \be \in \idem(S^{2}\std, \jmult)$ satisfy $a = \tr \al \neq 1$ and $b = \tr \be \neq 1$, then
\begin{align}\label{sabdefined}
\begin{split}
\sB(\al, \be) & =\tfrac{1}{1 - a - b}\left( \tfrac{b}{a - 1}\al \kwedge \al + \tfrac{a}{b - 1}\be\kwedge \be - 2\al \kwedge \be\right)
\end{split}
\end{align}
is an idempotent in $(\mcurvweyl(\std), \opmult)$ satisfying $|\sB(\al, \be)|^{2} = \tfrac{2(a + b -2)ab}{(a + b -1)(a - 1)(b - 1)}$.
In particular, if $b \notin \{1, n-1\}$, $\hat{\be} = h - \be$, and $\hat{b} = \tr \hat{\be}$ then 
\begin{align}\label{sbdefined}
\begin{split}
\sB(\be)  = \sB(\be, h-\be) %=\tfrac{1}{n-1}\left( \tfrac{\tr\hat{\be}}{\tr\be - 1}\be \kwedge \be + \tfrac{b}{\tr \hat{\be} - 1}\hat{\be}\kwedge \hat{\be} - 2\be \kwedge \hat{\be}\right)\\
&= \tfrac{2-n}{(b - 1)(n-1 -b)}\left(\be \kwedge \be - \tfrac{2(b - 1)}{n-2}\be \kwedge h + \tfrac{b(b - 1)}{(n-1)(n-2)}h\kwedge h\right)\\
&= \tfrac{2-n}{(b - 1)(n-1 -b)}\tf(\be \kwedge \be)
  = \tfrac{2-n}{( \hat{b} - 1)(n-1 -\hat{b})}\tf(\hat{\be} \kwedge \hat{\be}),
\end{split}
\end{align}
is an idempotent in $(\mcurvweyl(\std), \opmult)$ satisfying $|\sB(\be)|^{2} = \tfrac{2(n-2)(n-b)b}{(n-1)(n-1 - b)(b - 1)}$.
\end{corollary}

\begin{proof}
That $\sB(\al, \be)$ defined by the first equality of \eqref{sbdefined} is an idempotent follows from \eqref{hbidempotent}. That $\rictr(\sB(\al, \be)) =0$ follows from \eqref{rictralkwedgebe}.
The claimed value of $|\sB(\al, \be)|^{2}$ follows from \eqref{sabdefined} and \eqref{alwedgebegawedgesinorm} by straightforward computations.
Because $(h - \be) \circ \be = 0$ and $\tr(h - \be) = n - \tr\be$, that \eqref{sbdefined} is an idempotent in  $(\mcurvweyl(\std), \opmult)$ with $|\sB(\be)|^{2}$ having the claimed value is a special case of the preceding.
The first equality of \eqref{sbdefined} follows upon substituting $\hat{\be} = h - \be$ in \eqref{sabdefined}. Specializing \eqref{tfalkwedgebe} yields the second equality of \eqref{sbdefined}. Finally, $\tf(\be \kwedge \be) = \tf(\hat{\be}\kwedge \hat{\be})$ because $\hat{\be}\kwedge \hat{\be}= \be \kwedge \be + (h- 2\be)\kwedge h$. 
\end{proof}

\begin{corollary}\label{mcurvweylsimplecorollary}
Let $(\ste, h)$ be an $n$-dimensional Euclidean vector space. 
If $n > 3$, then $\mcurvweyl(\std) \opmult \mcurvweyl(\std) = \mcurvweyl(\std)$, and if $n > 4$, then $(\mcurvweyl(\std), \opmult)$ is a simple algebra.
\end{corollary}

\begin{proof}
By Lemma \ref{opweylsubalgebralemma}, $\mcurvweyl(\std)$ is a subalgebra of $(\mcurv(\std), \opmult)$. Let $x, y \in \std$ be orthogonal unit norm vectors. 
Because $\be = x\tensor x + y \tensor y$ satisfies $\be \circ \be = \be$ and $\tr \be = 2$, by Corollary \ref{kwedgeidempotentcorollary}, $|\sB(\be)|^{2} = \tfrac{4(n-2)^{2}}{(n-1)(n-3)} \neq 0$, so $\sB(\be)$ is a nontrivial idempotent in ($\mcurvweyl(\std), \opmult)$. If $n > 3$, this shows $(\mcurvweyl(\std), \opmult)$ is a nontrivial algebra and $\mcurvweyl(\std) \opmult  \mcurvweyl(\std)$ is a nontrivial $O(n)$-submodule of the irreducible $O(n)$-module $\mcurvweyl(\std)$, so must equal $\mcurvweyl(\std)$. If $n > 4$, $SO(n)$ acts on $(\mcurvweyl(\std), \opmult)$ irreducibly by automorphisms, so, by Theorem \ref{simpletheorem}, $(\mcurvweyl(\std), \opmult)$ is simple. 
\end{proof}

\begin{lemma}\label{sxwedgelemma}
Let $(\ste, h)$ be a metric vector space. For $\sX \in \mcurv(\std)$ and $\al \in S^{2}(\std)$,
\begin{align}
\label{xalh}
&\sX \opmult (\al \kwedge h) =  \tfrac{1}{2}\left(\op{\sX}(\al)\kwedge h + \al \kwedge \rictr(\sX)\right),&
&\sX \opmult (h \kwedge h) = \rictr(\sX) \kwedge h.
\end{align}
\end{lemma}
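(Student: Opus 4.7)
The plan is to reduce the identity, via the invariance of the pairing with respect to $\curvmult$, to an equivalent identity in $S^2\std$, which is then verified by direct computation.

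For any test tensor $\sY \in \mcurv(\std)$, the invariance \eqref{curvmultinvariance} of the pairing, the identity \eqref{sxalwedgebe} applied with $\be = h$, and the relation $\op{\sZ}(h) = \rictr(\sZ)$ from \eqref{opsxh} give
\begin{align*}
\lb \sX \curvmult (\al \kwedge h), \sY \ra = \lb \al \kwedge h, \sX \curvmult \sY \ra = -2\lb \al, \rictr(\sX \curvmult \sY) \ra.
\end{align*}
Similarly, pairing the proposed right-hand side of the lemma with $\sY$, applying \eqref{sxalwedgebe} to each Kulkarni--Nomizu summand, and invoking the self-adjointness of $\op{\sY}$ on $S^2\std$ from Lemma \ref{curvoplemma},
\begin{align*}
\lb -\tfrac{1}{2}\left(\op{\sX}(\al)\kwedge h + \al \kwedge \rictr(\sX)\right), \sY \ra
&= \lb \op{\sX}(\al), \rictr(\sY) \ra + \lb \op{\sY}(\al), \rictr(\sX) \ra\\
&= \lb \al, \op{\sX}(\rictr(\sY)) + \op{\sY}(\rictr(\sX)) \ra.
\end{align*}
Since $\al \in S^2\std$ and $\sY \in \mcurv(\std)$ are arbitrary and the pairing is nondegenerate, the lemma reduces to the $S^2\std$-valued identity
\begin{align*}
-2\,\rictr(\sX \curvmult \sY) = \op{\sX}(\rictr(\sY)) + \op{\sY}(\rictr(\sX)) \qquad \text{for all } \sX, \sY \in \mcurv(\std);
\end{align*}
since both sides are symmetric and bilinear in $(\sX, \sY)$, by polarization this is equivalent to the single-tensor identity
\begin{align*}
-\rictr(\sX \curvmult \sX) = \op{\sX}(\rictr(\sX)).
\end{align*}

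To prove this last identity, the plan is to expand the left-hand side using the reformulation provided by Lemma \ref{cmultreductionlemma},
\begin{align*}
(\sX \curvmult \sX)_{ijkl} = \sX_{ij}{}^{pq}\sX_{pqkl} - \tfrac{3}{2}\sX_{[ij}{}^{pq}\sX_{kl]pq} + 2B(\sX, \sX)_{k[ij]l},
\end{align*}
and take the Ricci trace on the first and fourth indices. The first Bianchi identity $\sX_{[ijk]l} = 0$ annihilates the totally antisymmetrized middle term upon tracing, while the pair-exchange symmetry $\sX_{klij} = \sX_{ijkl}$ together with the antisymmetries in the index pairs $(ij)$ and $(kl)$ collapse the remaining contributions to multiples of the expression $\rictr(\sX)^{pq}\sX_{ipqj}$, which by \eqref{opsxsym} coincides with $\op{\sX}(\rictr(\sX))_{ij}$.

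The main obstacle will be the bookkeeping in this final verification: one must patiently track the contractions of $B(\sX, \sX)_{k[ij]l}$, invoking the Bianchi identity and the pair-exchange symmetry of $\sX$ to ensure that all contributions other than $\rictr(\sX)^{pq}\sX_{ipqj}$ cancel and that the remaining coefficient matches the claimed overall sign. As a consistency check, the identity can be verified in the distinguished case $\sY = h \kwedge h$ using \eqref{rictralh}, \eqref{rictrhh}, and \eqref{alwhbe}, where both sides evaluate to $(2-n)\rictr(\sX) - \scal(\sX)h$.
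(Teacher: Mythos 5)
Your proposal is correct, but it runs the logic in the opposite direction from the paper. The paper proves \eqref{xalh} head-on: it expands $B(\sX, \al \kwedge h)_{ijkl}$ by writing out the Kulkarni--Nomizu product explicitly and then assembles $\sX \curvmult (\al \kwedge h)$ from the definition \eqref{curvmultdefined}; it is only afterwards, in Lemma \ref{weylsubalgebralemma}, that the Ricci-trace identity \eqref{ricxy} is deduced from \eqref{xalh} by exactly the duality chain you wrote down. You reverse this: you first establish $-2\rictr(\sX \curvmult \sY) = \op{\sX}(\rictr(\sY)) + \op{\sY}(\rictr(\sX))$ and then dualize back through the nondegenerate pairings on $\mcurv(\std)$ and $S^{2}\std$. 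This is legitimate, and the one trap you had to avoid --- quoting \eqref{ricxy} as stated, whose proof in the paper presupposes the present lemma --- you sidestep correctly by supplying an independent proof of the trace identity. That independent proof is essentially the content of the Remark following Lemma \ref{weylsubalgebralemma} (going back to Lemma $7.4$ of Hamilton), which traces $B(\sX,\sY)$ directly; your variant via Lemma \ref{cmultreductionlemma} works equally well, with one small correction: the totally antisymmetrized middle term dies upon tracing simply because a completely antisymmetric tensor is annihilated by contraction with the symmetric $h^{il}$, not by the Bianchi identity, while the contributions of the form $\sX_{(i}\,^{abc}\sY_{j)abc}$ coming from the other two terms cancel against each other, leaving only $-B_{p}\,^{p}\,_{ij}$, which is the $\rictr$-contracted expression you name. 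As for what each route buys: the paper's computation is self-contained and yields the explicit formula for $B(\sX, \al\kwedge h)$ as a by-product, whereas yours trades the Kulkarni--Nomizu expansion for the shorter trace computation of $B$ and makes transparent that the lemma is precisely the metric dual of the Ricci-trace identity.
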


\begin{proof}
For $\al \in S^{2}\std$ and $\sX, \sY \in \mcurv(\std)$,
\begin{align}\label{sxwedgecompute}
\begin{split}
\lb& (\al \kwedge h)\opmult \sX, \sY\ra  =  \lb \al \kwedge h, \sX \opmult \sY\ra =  -2\lb \al, \op{\sX \opmult \sY}(h)\ra = -2\lb \al, \rictr(\sX \opmult \sY)\ra \\
&= -\lb \al, \op{\sX}(\rictr(\sY)) + \op{\sY}(\rictr(\sX))\ra= -\lb \op{\sX}(\al), \op{\sY}(h)\ra + \lb \al, \op{\sY}(\rictr(\sX))\ra = \tfrac{1}{2}\lb \op{\sX}(\al)\kwedge h+ \al \kwedge \rictr(\sX), \sY\ra ,
\end{split}
\end{align}
the first equality by the invariance of $\lb\dum, \dum\ra$, 
the second equality by \eqref{sxalwedgebe}, 
the third and fourth equalities because $\op{\sZ}(h) = \rictr(\sZ)$ for any $\sZ \in \mcurv(\std)$,
the fifth equality again by \eqref{sxalwedgebe}, 
and the last equality by \eqref{opricxy}. By the nondegeneracy of $\lb\dum, \dum \ra$, \eqref{sxwedgecompute} implies the first equality of \eqref{xalh}. Because $\op{\sX}(h) = \rictr(\sX)$, taking $\al = h$ in the first equality of \eqref{xalh} yields its second equality.
\end{proof}

\begin{lemma}\label{alhbehlemma}
Let $(\ste, h)$ be an $n$-dimensional metric vector space. For $\al, \be \in S^{2}(\std)$,
\begin{align}
\label{alhbeh}
\begin{split}
(h \kwedge \al) \opmult (h \kwedge \be) &=\tfrac{2-n}{4}\al \kwedge \be + \tfrac{1}{2} \left(\al \jmult \be  - \tfrac{1}{2}\tr(\al) \be - \tfrac{1}{2}\tr(\be)\al - \tfrac{1}{2}\lb \al, \be \ra h\right)\kwedge h,
\end{split}\\
\label{alhhh}
\begin{split}
(h\kwedge \al) \opmult (h \kwedge h)& = \tfrac{2-n}{2} h \kwedge \al - \tfrac{1}{2}\tr(\al)h\kwedge h,
\end{split}\\
\label{hhhh}
(h\kwedge h)\opmult(h\kwedge h) &= (1-n)h\kwedge h.
\end{align}
\end{lemma}

\begin{proof}
Taking $\sX = \be \kwedge h$ in \eqref{xalh} and simplifying using \eqref{rictralh} and \eqref{symopalbega} yields \eqref{alhbeh}, and \eqref{alhhh} and \eqref{hhhh} are special cases of \eqref{alhbeh}. Alternatively, \eqref{alhbeh}-\eqref{hhhh} are special cases of \eqref{symalbecmgasi}.
\end{proof}

\begin{lemma}\label{mcurvweylspanlemma}
Let $(\ste, h)$ be a Euclidean vector space of dimension $n \geq 4$. 
Let 
\begin{align}
\label{spanset}
\begin{split}
\stw_{1} &= \spn \left\{(x\sprod y) \kwedge (z \sprod w): x, y, z, w \in \std \,\,\text{are pairwise orthogonal}\right\},\\
\stw_{2}  &=  \spn \left\{\al \kwedge \be: \al, \be \in S^{2}_{0}\std, \al \jmult \be = 0\right\},\\
\stw_{3} &= \spn \left\{(x\dwedge y) \cdot (z \dwedge w): x, y, z, w \in \std \,\,\text{are pairwise orthogonal}\right\},\\
\stw_{4}  &=  \spn \left\{\al \cdot \be: \al, \be \in \ext^{2}\std, \al \jmult \be  = 0\right\}.
\end{split}
\end{align}
Then $\mcurvweyl(\std)  = \stw_{1} = \stw_{2} = \stw_{3} = \stw_{4}$.
\end{lemma}
\begin{proof}
By \eqref{rictralkwedgebe} and \eqref{rictralcdotbe}, $\stw_{2}, \stw_{4} \subset \mcurvweyl(\std)$.
%%% Omitted because unnecessary
%For $x, y, z, w \in \std$, by \eqref{rictralkwedgebe} and \eqref{rictralcdotbe},
%\begin{align}\label{rictrxykwedgezw}
%\begin{split}
%4\rictr(&(x\sprod y) \kwedge (z \sprod w)) = 4(x\sprod y)\jmult (z \sprod w) -2\tr(x\sprod y)z\sprod w - 2\tr(z\sprod w)x\sprod y\\
%& = \lb x, z\ra y\sprod w + \lb x, w\ra y\sprod z+\lb y, z\ra x\sprod w+\lb y, w\ra x\sprod z- 2\lb x, y\ra z\sprod w-2\lb z, w\ra x\sprod y,
%\end{split}\\
%\label{rictrxycdotzw}
%\begin{split}
%\rictr(&(x\dwedge y) \cdot (z \dwedge w))  = 3(x\dwedge y) \jmult(z \dwedge w)\\
%& = 3\left(\lb y, z\ra x\sprod w - \lb y, w\ra x\sprod z - \lb x, z\ra y\sprod w + \lb x, w\ra y\sprod z \right),
%\end{split}
%\end{align}
%so that, if $x, y, z, w$ are pairwise orthogonal, then $(x\sprod y) \kwedge (z \sprod w), (x\dwedge y) \cdot (z \dwedge w) \in \mcurvweyl(\std)$. 
By \eqref{alcdotbegacdotsinorm} and \eqref{alwedgebegawedgesinorm}, $|(x\sprod y) \kwedge (z \sprod w)|^{2} = 1/4$ and $|(x\dwedge y) \cdot (z \dwedge w)|^{2} = 12$, so $\stw_{1}$ and $\stw_{3}$ are nontrivial. If $x, y, z, w$ are pairwise orthogonal, then $x \sprod y, z \sprod w \in S^{2}_{0}\std$ satisfy $(x\sprod y)\circ(z \sprod w) = 0$, so $\stw_{1} \subset \stw_{2}$, and $x \dwedge y, z \dwedge w\in \ext^{2}\std$ satisfy $(x\dwedge y)\circ(z \dwedge w) = 0$, so $\stw_{3} \subset \stw_{4}$. Since $\stw_{1}$ and $\stw_{3}$ are nontrivial $O(n)$-submodules of the $O(n)$-irreducible module $\mcurvweyl(\std)$, they equal $\mcurvweyl(\std)$. 
\end{proof}

\begin{theorem}\label{faithfulmultiplicationtheorem}
Let $(\ste, h)$ be a Euclidean vector space of dimension $n \geq 4$. If $\sX \in \mcurv(\std)$ satisfies $\sX \opmult \sY = 0$ for all $\sY \in \mcurv(\std)$, then $\sX = 0$. 
\end{theorem}
\begin{proof}
Because $\sX \opmult \sY = \op{\sX}_{\mcurv(\std)}(\sY)$, an equivalent claim is that the map $\op{\dum}_{\mcurv(\std)}:\mcurv(\std) \to  \eno(\mcurv(\std))$ is injective. If $\sX \opmult \sY = 0$ for all $\sY \in \mcurv(\std)$, then $0 = \lb \sX \opmult \sY, \sZ \ra = \lb \sX, \sY \opmult \sZ\ra$ for all $\sY, \sZ \in \mcurv(\std)$. With $\sY = \sZ = h\kwedge h$, by \eqref{hhhh} and \eqref{sxalwedgebe} this yields $0 = \lb \sX, h \kwedge h\ra = -2\scal(\sX)$. Taking $\sY = h \kwedge h$ and $\sZ = \al \kwedge h$ for $\al \in S^{2}_{0}\std$, by \eqref{alhhh} and \eqref{sxalwedgebe} this yields $0 = 2\lb \sX, (h\kwedge h)\opmult (\al \kwedge h)\ra = (2-n)\lb \sX, \al \kwedge h\ra = 2(n-2)\lb \op{\sX}(h), \al \ra = 2(n-2)\lb \rictrz(\sX), \al\ra$. Since $\al \in S^{2}_{0}\std$ is arbitrary, this shows $\rictr(\sX) = 0$, so $\sX \in \mcurvweyl(\std)$. Finally, if $\al, \be \in S^{2}_{0}\std$, then, by \eqref{alhbeh} and the preceding, $0 = 4\lb \sX , (\al \kwedge h)\opmult (\be \kwedge h)\ra = (2-n)\lb \sX, \al \kwedge \be\ra$. Because the set $\stw_{2}$ of Lemma \ref{mcurvweylspanlemma} spans $\mcurvweyl(\std)$, this shows that $\sX$ is orthogonal to $\mcurvweyl(\std)$, so $\sX = 0$.
\end{proof}

\begin{lemma}\label{subspaceproductslemma}
Let $(\ste, h)$ be an $n$-dimensional Euclidean vector space. The projections onto the $O(h)$-irreducible summands of $\mcurv(\std)$ of $\sX, \sY \in \mcurv(\std)$ satisfy
\begin{align}
\label{psps}\projscal(\sX)\opmult \projscal(\sY) &= -\tfrac{1}{n^{2}(n-1)}\scal(\sX)\scal(\sY)h \kwedge h = \tfrac{1}{n}\scal(\sX)\projscal(\sY) = \tfrac{1}{n}\scal(\sY)\projscal(\sX),\\%=  -\tfrac{1}{2n}\left(\scal(\sX)\projscal(\sY) + \scal(\sY)\projscal(\sX)\right),\\
\label{pspr}\projscal(\sX)\opmult \projric(\sY) &= -\tfrac{1}{n(n-1)}\scal(\sX)\rictrz(\sY)\kwedge h = \tfrac{(n-2)}{2n(n-1)}\scal(\sX)\projric(\sY),\\
\label{tfps}\tf(\sX)\opmult \projscal(\sY)  &= 0,\\
\label{tfpr}\tf(\sX)\opmult \projric(\sY)  &=  \projric(\tf(\sX)\opmult \sY),\\
\label{prpr}\projric(\sX) \opmult \projric(\sY) &= \tfrac{1}{2-n}\tf(\rictrz(\sX)\kwedge \rictrz(\sY)) + \tfrac{2}{2-n}\projric(\rictrz(\sX)\kwedge \rictrz(\sY)) + \projscal(\rictrz(\sX)\kwedge \rictrz(\sY)).
\end{align}
\end{lemma}

\begin{proof}
The identities \eqref{psps}-\eqref{tfps} follow from \eqref{xalh}, \eqref{alhhh}, and \eqref{hhhh} and the definitions \eqref{projricdefined} of $\projric$ and $\projscal$. By \eqref{projricdefined} and \eqref{xalh}, $\tf(\sX) \opmult \projric(\sY)  = \tfrac{1}{2-n}\op{\tf(\sX)}(\rictrz(\sY))\kwedge h$. By \eqref{opricxy}, $2\rictr(\tf(\sX)\opmult \sY) = \op{\tf(\sX)}(\rictr(\sY)) =\op{\tf(\sX)}(\rictrz(\sY))$, the last equality because $\op{\tf(\sX)}(h) = \rictr(\tf(\sX)) = 0$. Combining the preceding observations yields \eqref{tfpr}. Finally, \eqref{prpr} follows straightforwardly from \eqref{alhbeh}, using $\rictrz(\rictrz(\sX) \kwedge \rictrz(\sY)) = \tf(\rictrz(\sX) \jmult \rictrz(\sY))$ and $\scal(\rictrz(\sX) \kwedge \rictrz(\sY)) = \lb \rictrz(\sX), \rictrz(\sY)\ra$. 
\end{proof}

An irreducible $O(n)$-submodule of $\tensor^{k}\std$ comprises the completely trace-free tensors of a given type (this statement is false for even $k = n$ if $O(n)$ is replaced by $SO(n)$). By Lemma \ref{curvoplemma}, if $G \subset O(n)$ is a Lie subgroup and $\stu \subset \mcurv(\std)$ and $\stw \subset \tensor^{k}\std$ are $G$-submodules, then $\op{\stu}(\stw)$ is a $G$-submodule of $\tensor^{k}\std$, for if $g \in O(n)$, $\sX \in \stu$, and $\sY \in \stw$, $g\cdot \op{\sX}(\sY) = \op{g\cdot \sX}(g\cdot \sY) \in \op{\stu}(\stw)$. In particular, because $\op{\sX}$ preserves type, if $\stu_{1}, \stu_{2} \subset \mcurv(\std)$ are $G$-submodules, then $\op{\stu_{1}}(\stu_{2})$ is a $G$-submodule of $\mcurv(\std)$.
By Theorem \ref{opalgebratheorem}, $\op{\stu_{2}}(\stu_{1}) = \stu_{2}\opmult \stu_{1} = \stu_{1}\opmult \stu_{2}= \op{\stu_{1}}(\stu_{2})$. For example, Corollary \ref{mcurvweylsimplecorollary} shows that $\op{\stu}(\stu) = \stu \opmult \stu = \stu$ for $\stu = \mcurvweyl(\std)$. Theorem \ref{fusiontheorem} describes completely the submodules $\stu_{1}\opmult \stu_{2}$ for $\stu_{1}, \stu_{2}$ among the $O(n)$-irreducible summands of $\mcurv(\std)$. 

\begin{theorem}\label{fusiontheorem}
Let $(\ste, h)$ be an $n$-dimensional Euclidean vector space. 
The products of the $O(n)$-irreducible submodules of $\mcurv(\std)$ satisfy:
\begin{align}
\label{subspaceproductsss} \mcurvscal(\std) &\opmult \mcurvscal(\std) = \mcurvscal(\std), \quad \text{if} \,\,n > 1,\\
\label{subspaceproductsrs} \mcurvric(\std) &\opmult \mcurvscal(\std) = \mcurvric(\std), \quad \text{if} \,\,n > 2, \\
\label{subspaceproductsws} \mcurvweyl(\std) &\opmult \mcurvscal(\std) = \{0\},\\
\label{subspaceproductswr} \mcurvweyl(\std) &\opmult \mcurvric(\std) = \mcurvric(\std), \quad \text{if} \,\,n > 3,  \\ 
\label{subspaceproductsrr} \tf(\mcurvric(\std) &\opmult \mcurvric(\std)) = \mcurvweyl(\std), \quad \text{if} \,\,n > 3,\\
\label{subspaceproductsww}  \mcurvweyl(\std) &\opmult  \mcurvweyl(\std) = \mcurvweyl(\std), \quad \text{if} \,\,n > 3. 
\end{align}
\end{theorem}

\begin{proof}
The containments in \eqref{subspaceproductsss}-\eqref{subspaceproductsrr} of subspaces are consequences of polarizing \eqref{psps}-\eqref{prpr}. The equalities require further justification. The product \eqref{subspaceproductsws} is immediate from \eqref{tfps}. By \eqref{alhhh},  multiplication by $h \kwedge h$, which spans $\mcurvscal(\std)$, is invertible on $\mcurvric(\std)$ when $n > 2$ and on $\mcurvscal(\std)$ when $n > 1$, and this suffices to show the equalities \eqref{subspaceproductsss} and \eqref{subspaceproductsrs}. 

Let $\al \in S^{2}(\std)$. If $\sX \in \mcurvweyl(\std)$, then $\tr \op{\sX}(\al)  = \lb \rictr(\sX), \al \ra = 0$, so, by \eqref{xalh}, $2\sX \opmult (\al \kwedge h) = \op{\sX}(\al)\kwedge h  \in \mcurvric(\std)$.
This shows the containment of $O(n)$-modules, $\mcurvweyl(\std) \opmult \mcurvric(\std) \subset \mcurvric(\std)$. By the irreducibility of $\mcurvric(\std)$, to show equality it suffices to exhibit a nonzero element of $\mcurvweyl(\std) \opmult \mcurvric(\std)$. Let $u, v \in \ste$ be such that $|u|^{2} = 2 = |v|^{2}$ and $\lb u, v \ra = 0$. Then $\al = u\sprod v \in S^{2}_{0}\std$ satisfies $|\al|^{2} = 2$, $2 \al \circ \al = u \tensor u + v \tensor v$, $\lb \al, \al \circ \al \ra = 0$, and $\al \circ \al \circ \al = \al$. From these and $-4\al \kwedge \al = (u \dwedge v)\tensor (u \dwedge v)$ there follow $\rictr(\al \kwedge \al) = \al \circ \al$,  $|\rictr(\al\kwedge \al)|^{2} = 2$, $|\rictrz(\al\kwedge \al)|^{2} = \tfrac{2(n-2)}{n}$, and $\scal(\al \kwedge \al) =  |\al|^{2} = 2$. 
By \eqref{tfalkwedgebe}, $\tf(\al\kwedge \al) = \al \kwedge \al + \tfrac{2}{n-2}(\al \circ \al)\kwedge h - \tfrac{1}{(n-2)(n-1)}|\al|^{2}h \kwedge h$, and by \eqref{symopalbega} and the preceding observations,
$\op{\tf(\al \kwedge \al)}_{S^{2}\std}(\al) = -\tfrac{n-3}{n-1}\al$, so, by \eqref{xalh},
\begin{align}
\tf(\al \kwedge \al) \opmult (\al \kwedge h) & = \tfrac{1}{2}\op{\tf(\al \kwedge \al)}(\al) \kwedge h = \tfrac{3-n}{2(n-1)}\al \kwedge h,
\end{align}
which shows that $\mcurvweyl(\std) \opmult \mcurvric(\std)$ is nontrivial if $n > 3$ and so proves the equality in \eqref{subspaceproductswr}.

Suppose $\dim \std > 3$. Let $x, y, z, w \in \std$ be pairwise orthogonal unit norm vectors. Then $\al = x \sprod y$ and $\be = z \sprod w$ are in $S^{2}_{0}\std$, so $\al \kwedge h, \be \kwedge h \in \mcurvric(\std)$. Because $\al \jmult \be = 0$ and $\lb \al, \be \ra = 0$, by \eqref{alhbeh}, $4(\al \kwedge h)\opmult(\be \kwedge h) = (2-n)\al \kwedge \be$. By \eqref{rictralkwedgebe}, $\rictr(\al \kwedge \be) =0$, so $4(\al \kwedge h)\opmult(\be \kwedge h) = (2-n)\al \kwedge \be \in \mcurvweyl(\std)$. Since, by Lemma \ref{mcurvweylspanlemma}, $\mcurvweyl(\std)$ is spanned by elements of the form $\al \kwedge \be$, this shows the equality in \eqref{subspaceproductsrr}. The equality \eqref{subspaceproductsww} is Corollary \ref{mcurvweylsimplecorollary}.
\end{proof}

\begin{remark}\label{fusionremark}
Lemma \ref{subspaceproductslemma} and Theorem \ref{fusiontheorem} give fusion rules (in the sense of \cite{Hall-Rehren-Shpectorov}) for $(\mcurv(\std), \opmult)$. Precisely, for the idempotent $\sH = \tfrac{1}{1-n}h \kwedge h$, the subspaces $\mcurvscal(\std)$, $\mcurvric(\std)$, and $\mcurvweyl(\std)$ are the eigenspaces of $L_{\opmult}(\sH)$ with eigenvalues $1$, $\tfrac{n-2}{2(n-1)}$, and $0$. Lemma \ref{subspaceproductslemma} shows that their products satisfy the fusion rule $\star: \Phi \times \Phi \to 2^{\Phi}$ indicated in Table \ref{fusionrule}, where $\Phi = \{1, \tfrac{n-2}{2(n-1)}, 0\}$. A subset of $\Phi$ indicates the sum of the eigenspaces corresponding to this subset and an entry in the table means that the $\opmult$ product of the eigenspaces corresponding with $\al, \be \in \Phi$ is contained in the sum of the eigenspaces corresponding with elements of $\al \star \be$.

\begin{table}
\caption{Fusion rules for $(\mcurv(\std), \opmult)$}\label{fusionrule}
\begin{tabular}{|c|c|c|c|}
\hline
$\star$  & $1$  & $0$ & $\tfrac{n-2}{2(n-1)}$\\
\hline
$1$ & $\{1\}$ & $\emptyset$ & $\{\tfrac{n-2}{2(n-1)}\}$ \\
\hline
$0$ & $\emptyset$ & $\{0\}$ & $\{\tfrac{n-2}{2(n-1)}\}$ \\
\hline
$\tfrac{n-2}{2(n-1)}$ &$\{\tfrac{n-2}{2(n-1)}\}$& $\{\tfrac{n-2}{2(n-1)}\}$ & $\{1, 0, \tfrac{n-2}{2(n-1)}\}$ \\
\hline
\end{tabular}
\end{table}

Note that Theorem \ref{fusiontheorem} gives more information than does Table \ref{fusionrule} because it asserts the equalities of the products of subspaces, rather than mere containment relations.
\end{remark}

As an application of Lemma \ref{subspaceproductslemma} there is given a simple proof of \cite[Theorem $2$]{Bohm-Wilking}.
For $\al, \be \in \rea$ define an $O(n)$-equivariant endomorphism $\bw_{\al, \be} \in \eno(\mcurv(\std))$ by
\begin{align}\label{bwdefined}
\bw_{\al, \be}(\sX) = \tf(\sX) + \be \projric(\sX) + \al \projscal(\sX) = \sX + (\be - 1)\projric(\sX) + (\al - 1)\projscal(\sX).
\end{align}
For example, \eqref{prpr} can be rewritten as $(2-n)\projric(\sX) \opmult \projric(\sX) = \bw_{2-n, 2}(\rictrz(\sX) \kwedge \rictrz(\sX))$. 

Note that $\bw_{1, 1} = \Id_{\mcurv(\std)}$. Because $\bw_{\al, \be}\circ \bw_{\bar{\al}, \bar{\be}} = \bw_{\al\bar{\al}, \be\bar{\be}}$, $\bw_{\al, \be}$ is invertible if and only if $\al \neq 0$ and $\be \neq 0$, in which case $\bw_{\al, \be}^{-1} = \bw_{\al^{-1}, \be^{-1}}$. If $\al = 1 + 2(n-1)a$ and $\be = 1 + (n-2)b$, then $\bw_{\al, \be}$ equals the endomorphism called $l_{a, b}$ introduced by C. Böhm and B. Wilking in \cite{Bohm-Wilking}. The reason for working with the parameters $\al$ and $\be$ is that the map $(\al, \be) \in \reat \times \reat \to \bw_{\al, \be} \in \eno(\mcurv(\std))$ is an injective group homomorphism.

The key point of Theorem \ref{bohmwilkingtheorem} for its applications is that \eqref{dalbe} does not depend on $\tf(\sX)$. 

\begin{theorem}[C. Böhm and B. Wilking {\cite[Theorem $2$]{Bohm-Wilking}}]\label{bohmwilkingtheorem}
Let $(\ste, h)$ be an $n$-dimensional Euclidean vector space. If $\al, \be \in \rea\setminus\{0\}$ and $\dbw_{\al, \be}(\sX) =  \bw_{\al, \be}^{-1}\left(\bw_{\al, \be}(\sX) \opmult \bw_{\al, \be}(\sX) \right) - \sX \opmult \sX$, where $\bw_{\al, \be} \in \eno(\mcurv(\std))$ is defined in \eqref{bwdefined}, then
\begin{align}\label{dalbe}
\begin{split}
\dbw_{\al, \be}(\sX) 
& = \tfrac{1-\be^{2}}{n-2}\tf(\rictrz(\sX) \kwedge \rictrz(\sX)) 
+ \projric\left(\tfrac{2(1-\be)}{n-2}\rictrz(\sX) \kwedge \rictrz(\sX) + \tfrac{(n-2)(\al-1)}{n(n-1)}\scal(\sX)\sX \right)\\
&\quad + \projscal\left( (\tfrac{\be^{2}}{\al} - 1)\rictrz(\sX) \kwedge \rictrz(\sX) + \tfrac{\al - 1}{n}\scal(\sX)\sX\right)\\
& = \tfrac{1-\be^{2}}{n-2}\tf(\rictrz(\sX) \kwedge \rictrz(\sX)) + \tfrac{4(\be - 1)}{(n-2)^{2}}\tf\left(\rictrz(\sX) \jmult \rictrz(\sX)\right)\kwedge h + \tfrac{2(1-\al)}{n(n-1)}\scal(\sX)\rictrz(\sX)\kwedge h \\
&\quad + \left( \tfrac{\al - \be^{2}}{n(n-1)\al})|\rictrz(\sX)|^{2} + \tfrac{1- \al}{n^{2}(n-1)}\scal(\sX)\right)h \kwedge h.
\end{split}
\end{align}
\end{theorem}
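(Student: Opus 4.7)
The plan is to exploit the orthogonal decomposition $\sX = W + R + S$, where $W = \tf(\sX) \in \mcurvweyl(\std)$, $R = \projric(\sX) \in \mcurvric(\std)$, and $S = \projscal(\sX) \in \mcurvscal(\std)$, so that by the definition of $\bw_{\al,\be}$ one has $\bw_{\al,\be}(\sX) = W + \be R + \al S$. The central observation, which drives everything, is that the fusion rules of Lemma \ref{subspaceproductslemma} determine precisely which modules each bilinear product $W\curvmult W$, $W\curvmult R$, $W\curvmult S$, $R\curvmult R$, $R\curvmult S$, $S\curvmult S$ lies in, and consequently how $\bw_{\al,\be}^{-1}$ acts on each summand.

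First I expand
\begin{align*}
\bw_{\al,\be}(\sX)\curvmult \bw_{\al,\be}(\sX) &= W\curvmult W + 2\be\,W\curvmult R + \be^{2} R\curvmult R + 2\al\be\,R\curvmult S + \al^{2} S\curvmult S,
\end{align*}
having used \eqref{tfps} to drop the cross term $2\al\,W\curvmult S = 0$. Now \eqref{prpr} decomposes $R\curvmult R = (R\curvmult R)_{W} + (R\curvmult R)_{R} + (R\curvmult R)_{S}$ into its three components, while \eqref{tfpr}, \eqref{pspr}, \eqref{psps} and the fact that $\mcurvweyl(\std)$ is a subalgebra (Lemma \ref{weylsubalgebralemma}) place all remaining products in single irreducible summands. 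Applying $\bw_{\al,\be}^{-1}$, which acts as the identity on $\mcurvweyl(\std)$, as $\be^{-1}$ on $\mcurvric(\std)$, and as $\al^{-1}$ on $\mcurvscal(\std)$, yields
\begin{align*}
\bw_{\al,\be}^{-1}\bigl(\bw_{\al,\be}(\sX)\curvmult \bw_{\al,\be}(\sX)\bigr) &= W\curvmult W + 2\,W\curvmult R + \be^{2}(R\curvmult R)_{W} + \be(R\curvmult R)_{R} \\
&\quad + \tfrac{\be^{2}}{\al}(R\curvmult R)_{S} + 2\al\,R\curvmult S + \al\,S\curvmult S.
\end{align*}
Subtracting $\sX\curvmult\sX = W\curvmult W + 2\,W\curvmult R + R\curvmult R + 2\,R\curvmult S + S\curvmult S$ produces the crucial cancellations of the $W\curvmult W$ and $2\,W\curvmult R$ terms, giving
\begin{align*}
\dbw_{\al,\be}(\sX) &= (\be^{2}-1)(R\curvmult R)_{W} + (\be-1)(R\curvmult R)_{R} + 2(\al-1)\,R\curvmult S \\
&\quad + \bigl(\tfrac{\be^{2}}{\al}-1\bigr)(R\curvmult R)_{S} + (\al-1)\,S\curvmult S,
\end{align*}
which manifestly does not depend on $W = \tf(\sX)$. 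This is the conceptual content of the Böhm--Wilking theorem.

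To obtain the stated explicit form, I would then substitute the formulas of \eqref{prpr}, \eqref{pspr}, \eqref{psps} for the components of $R\curvmult R$, $R\curvmult S$ and $S\curvmult S$, using $\projric(\sX) = -\tfrac{2}{n-2}\rictrz(\sX)\kwedge h$ and $\projscal(\sX) = -\tfrac{1}{n(n-1)}\scal(\sX)h\kwedge h$ to convert between $\rictrz(\sX)\kwedge h$ and $\scal(\sX)\projric(\sX)$, and between $h\kwedge h$ and $\scal(\sX)\projscal(\sX)$. This yields the second of the three expressions in \eqref{dalbe}. For the third expression, I would expand $\tf(\rictrz(\sX)\kwedge\rictrz(\sX))$ via \eqref{tfalkwedgebe}; since $\tr\rictrz(\sX) = 0$, the formula simplifies to
\begin{align*}
\tf(\rictrz(\sX)\kwedge\rictrz(\sX)) = \rictrz(\sX)\kwedge\rictrz(\sX) + \tfrac{2}{n-2}\,\rictrz(\sX)\jmult\rictrz(\sX)\kwedge h - \tfrac{|\rictrz(\sX)|^{2}}{(n-1)(n-2)}\,h\kwedge h,
\end{align*}
and likewise $\projric$ and $\projscal$ of $\rictrz(\sX)\kwedge\rictrz(\sX)$ are computed from \eqref{rictralkwedgebe} and \eqref{projricdefined}, which controls the trace and trace-free parts of $\rictrz(\sX)\jmult\rictrz(\sX)$. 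Collecting like terms then produces the final expression.

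The only real obstacle is bookkeeping: tracking the five contributing tensor types ($\tf(\rictrz\kwedge\rictrz)$, $\tf(\rictrz\jmult\rictrz)\kwedge h$, $\scal\cdot\rictrz\kwedge h$, $|\rictrz|^{2}h\kwedge h$, and $\scal^{2}h\kwedge h$) through the substitutions and verifying that their coefficients assemble correctly. No deeper computation is required beyond what is already recorded in Lemma \ref{subspaceproductslemma} and equations \eqref{tfalkwedgebe}, \eqref{rictralkwedgebe}.
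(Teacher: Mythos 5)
Your proposal is correct and follows essentially the same route as the paper: both rest entirely on the product formulas \eqref{psps}--\eqref{prpr} of Lemma \ref{subspaceproductslemma}, both observe that $\bw_{\al,\be}^{-1}$ acts by $1$, $\be^{-1}$, $\al^{-1}$ on the three irreducible summands so that the $\tf(\sX)\curvmult\tf(\sX)$ and $\tf(\sX)\curvmult\projric(\sX)$ contributions cancel against $\sX\curvmult\sX$, and both then substitute the explicit expressions to reach \eqref{dalbe}. The only difference is organizational — you track the six pairwise products of $W$, $R$, $S$ individually, whereas the paper records $\bw_{\al,\be}(\sX)\curvmult\bw_{\al,\be}(\sX)$ in one display and specializes $\al=\be=1$ — which does not change the substance of the argument.
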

\begin{proof}
Straightforward calculations using \eqref{psps}-\eqref{prpr} yield
\begin{align}\label{psiabpsiab}
\begin{split}
\bw_{\al, \be}(\sX) &\opmult \bw_{\al, \be}(\sX)  
= \tf(\sX) \opmult \tf(\sX) - \tfrac{\be^{2}}{n-2}\tf(\rictrz(\sX) \kwedge \rictrz(\sX)) \\
&  + \projric\left(-\tfrac{2\be^{2}}{n-2}\rictrz(\sX) \kwedge \rictrz(\sX) + 2\be \tf(\sX)\opmult \sX + \tfrac{\al \be(n-2)}{n(n-1)}\scal(\sX)\sX \right)
\\&\quad 
+ \projscal\left( \be^{2}\rictrz(\sX) \kwedge \rictrz(\sX) + \tfrac{\al^{2}}{n}\scal(\sX)\sX\right).
\end{split}
\end{align}
The special case $\al = 1 = \be$ yields
\begin{align}\label{sxsx}
\begin{split}
\sX \opmult \sX & = \tf(\sX) \opmult \tf(\sX) - \tfrac{1}{n-2}\tf(\rictrz(\sX) \kwedge \rictrz(\sX)) +  \projscal\left( \rictrz(\sX) \kwedge \rictrz(\sX) + \tfrac{1}{n}\scal(\sX)\sX\right)\\
&\quad+ \projric\left(-\tfrac{2}{n-2}\rictrz(\sX) \kwedge \rictrz(\sX) + 2 \tf(\sX)\opmult \sX + \tfrac{(n-2)}{n(n-1)}\scal(\sX)\sX \right).
\end{split}
\end{align}
Combining \eqref{psiabpsiab} and \eqref{sxsx} yields \eqref{dalbe}.
After rewriting \eqref{dalbe} in terms of the parameters $a$ and $b$ and a bit of computation it can be seen that \eqref{dalbe} recovers the conclusion of \cite[Theorem $2$]{Bohm-Wilking}. 
\end{proof}

\section{Characterization of \texorpdfstring{$(\mcurv(\std), \opmult)$}{} when \texorpdfstring{$\dim \ste = 3$}{}}\label{3dsection}
If $\dim \ste = 2$, the $1$-dimensional algebra $(\mcurv(\std), \opmult)$ is generated by $h \kwedge h$ and, by \eqref{hhhh}, it is isomorphic to the field $\rea$ of real numbers by the map sending $-h \kwedge h$ to $1 \in \rea$. This section identifies $(\mcurv(\std), \opmult)$ in a similarly explicit manner when $(\ste, h)$ is a $3$-dimensional Euclidean vector space. Pulling the multiplication $\opmult$ back via an $O(3)$-equivariant linear isomorphisms $\Psi: S^{2}\std \to \mcurv(\std)$ yields an $O(3)$-equivariant commutative multiplication $\tprod$ on $S^{2}\std$. By Lemma \ref{normalizationlemma}, requiring that the associated map $\al \in S^{2}\std \to 2\op{\Psi(\al)}_{\ext^{2}\std} \in \sherm(\ext^{2}\std, \lb\dum, \dum \ra)$ be a Jordan algebra isomorphism determines $\Psi$ uniquely and this determines a standard model $(S^{2}\std, \tprod)$ of $(\mcurv(\std), \opmult)$ realizing it as a deformation of Jordan product $\jmult$ on $S^{2}\std$ by terms built from the metric and the trace. Most of the section is devoted to formulating and proving Theorem \ref{3dcharacterizationtheorem} which specifies algebraic conditions that characterize the resulting algebra $(S^{2}\std, \tprod)$ up to isomorphism. %The precise characterization is given in Corollary \ref{3dpropertiescorollary}.

Because $S^{2}\std = S^{2}_{0}\std \oplus \spn \{h\}$ and $\mcurv(\std) = \mcurvric(\std)\oplus \mcurvscal(\std)$ are decompositions into $O(3)$-irreducible submodules, by the Schur Lemma the most general $O(3)$-equivariant linear map $S^{2}\std \to \mcurv(\std)$ has the form $\Psi_{p, \tau}(\al)  = \psi_{p,\tau}(\al)\kwedge h$ for some $p,\tau \in \rea$ and $\psi_{p, \tau} \in \eno(S^{2}\std)$ defined by $\psi_{p, \tau}(\al)=  p(\al +\tfrac{\tau - 1}{3} (\tr \al)h)$. Because $\psi_{p, \tau}\circ \psi_{\bar{p}, \bar{\tau}} = \psi_{p\bar{p}, \tau\bar{\tau}}$, $\psi_{p, \tau}$ is invertible if and only if $p\tau \neq 0$, in which case $\psi_{p, \tau}^{-1} = \psi_{p^{-1}, \tau^{-1}}$, and $\Psi_{p, \tau}\circ \psi_{\bar{p}, \bar{\tau}} = \Psi_{p\bar{p}, \tau\bar{\tau}}$. 
By \eqref{rictralkwedgebe},
\begin{align}
&\rictr(\Psi_{p, \tau}(\al))  = -\tfrac{p}{2}\left(\al + \tfrac{4\tau - 1}{3}(\tr \al)h\right) = \psi_{-p/2, 4\tau}(\al),&&
\scal(\Psi_{p, \tau}(\al))  = -2p\tau\tr \al,
\end{align}
so, if $p\tau \neq 0$, 
\begin{align}\label{3dpsiinverse}
\Psi_{p, \tau}^{-1}(\sX) =- \tfrac{2}{p}\left(\rictr(\sX) + \tfrac{1 -4\tau}{12\tau}\scal(\sX)h\right) =- \tfrac{2}{p}\left(\rictrz(\sX) + \tfrac{1}{12\tau}\scal(\sX)h\right)= \psi_{-2/p, 1/4\tau}(\rictr(\sX)).% =  \psi_{-p/2, 4\tau}^{-1}(\rictr(\sX)),
\end{align}
That $\Psi_{p, \tau}\circ \psi_{\bar{p}, \bar{\tau}} = \Psi_{p\bar{p}, \tau\bar{\tau}}$ means that the pullbacks of $\opmult$ via any $\Psi_{p, \tau}$ with $p\tau \neq 0$ yield isomorphic algebras. Lemma \ref{normalizationlemma} shows that certain natural conditions determine a unique choice. For its statement, observe that $\Psi_{p, \tau}$ determines a linear isomorphism $S^{2}\std \to \sherm(\ext^{2}\std, \lb \dum, \dum \ra)$ by $\al \to \op{\Psi_{p, \tau}(\al)}_{\ext^{2}\std}$, so it makes sense to say that $\Psi_{p, \tau}$ maps rank one elements to rank one elements if $\op{\Psi_{p, \tau}(\al)}_{\ext^{2}\std}$ has rank one whenever $\al$ has rank one, where the \emph{rank} of an element of $S^{2}\std$ means its rank as a bilinear form.

\begin{lemma}\label{normalizationlemma}
Let $(\ste, h)$ be a $3$-dimensional Euclidean vector space. For an $O(3)$-equivariant linear isomorphism $\Psi:S^{2}\std \to \mcurv(\std)$, the following are equivalent:
\begin{enumerate}
\item\label{3dnorm1} $\Psi$ has the form $\Psi_{1, -1/2}$.
\item\label{3dnorm2} $\Psi$ is isometric, $\Psi$ maps $h$ to an idempotent in $(\mcurv(\std), \opmult)$, and $\op{\Psi(\dum)}_{\ext^{2}\std}: S^{2}\std \to \sherm(\ext^{2}\std)$ maps rank one elements to rank one elements.
\item\label{3dnorm3} $2\op{\Psi(\dum)}_{\ext^{2}\std}: (S^{2}\std, \jmult) \to \sherm(\ext^{2}\std, \lb\dum, \dum \ra)$ is a Jordan algebra isomorphism.
\end{enumerate}
\end{lemma}
\begin{proof}
For $\al, \be \in S^{2}\std$, by \eqref{alwedgebegawedgesinorm},
\begin{align}
\begin{split}
 \lb \Psi_{p, \mu}(\al), \Psi_{p, \mu}(\be)\ra & = \lb \psi_{p, \tau}(\al), \psi_{p, \tau}(\be)\ra + \tr \psi_{p,\tau}(\al)\tr \psi_{p, \tau}(\be) \\&= p^{2}\left(\lb \al, \be \ra + \left(\tfrac{4\tau^{2} - 1}{3}\right)\tr \al \tr \be \right),
\end{split}
\end{align}
from which it follows that $ \lb \Psi_{p, \mu}(\al), \Psi_{p, \mu}(\be)\ra  = \lb \al, \be\ra$ if and only if $p, 2\tau \in \{\pm 1\}$. As $\Psi_{p, \tau}(h) = p \tau h\kwedge h$, by \eqref{hhhh}, $\Psi_{p, \tau}(h)$ is idempotent in $(\mcurv(\std), \opmult)$ if and only if moreover $2p\tau = -1$, in which case $\Psi$ has the form $\Psi_{1, -1/2}$ or $\Psi_{-1, 1/2}$.

For $\al \in S^{2}\std$ define $\al^{\sharp} \in \eno(\std)$ by $\al(u)_{i} = \al_{i}\,^{p}u_{p}$ for $u \in \std$. If $u, v \in \std$, then $\al \jmult (u\dwedge v) = \tfrac{1}{2}(\al^{\sharp}(u)\dwedge v + u \dwedge \al^{\sharp}(v))$. 
By \eqref{gasiopal}, $\op{\be \kwedge h}_{\ext^{2}\std} = - L_{\jmult}(\be)$ for $\be \in S^{2}\std$, so 
\begin{align}\label{oppsial}
\op{\Psi_{p, \tau}(\al)}_{\ext^{2}\std} = - p(L_{\jmult}(\al)  + \tfrac{\tau-1}{3}\tr(\al)\id_{\ext^{2}\std}), 
\end{align}
and, hence,
\begin{align}\label{jrdeig}
\op{\Psi_{p, \tau}(\al)}_{\ext^{2}\std}(u\dwedge v) = -\tfrac{p}{2}\left(\al^{\sharp}(u)\dwedge v + u \dwedge \al^{\sharp}(v) + \tfrac{2(\tau-1)}{3}\tr(\al)u\dwedge v\right).
\end{align}
Let $\{u_{1}, u_{2}, u_{3}\}$ be an orthonormal basis of $\std$ comprising eigenvectors of $\al^{\sharp}$ with respective eigenvalues $\mu_{1}$, $\mu_{2}$, and $\mu_{3}$. By \eqref{jrdeig}, $u_{2}\dwedge u_{3}$ is an eigenvector of $\op{\Psi_{p, \tau}(\al)}_{\ext^{2}\std}$ with eigenvalue $-\tfrac{p}{6}((2\tau+1)(\mu_{2} + \mu_{3}) + (2\tau-2)\mu_{1})$ and similarly for permutations of the indices. If $\al$ has rank one, then it can be supposed that $\mu_{2} = \mu_{3} = 0$ and $\mu_{1} \neq 0$ and it results that the eigenvalues of $\op{\Psi_{p, \tau}(\al)}_{\ext^{2}\std}$ are $-\tfrac{p}{3}(\tau-1)\mu_{1}$ with multiplicity $1$ and $-\tfrac{p}{6}(2\tau + 1)\mu_{1}$ with multiplicity $2$. Consequently, that $\op{\Psi_{p, \tau}(\al)}_{\ext^{2}\std}$ have rank one is possible if and only if $\tau = -1/2$ and in this case $\op{\Psi_{p, -1/2}(\al)}_{\ext^{2}\std}$ has rank one for any $\al \in S^{2}\std$ having rank one, for any $p \neq 0$. This completes the proof of the equivalence of \eqref{3dnorm1} and \eqref{3dnorm2}.

Because $\dim \ste = 3$, $\tf(\al \kwedge \be) =0$ for $\al, \be \in S^{2}\std$, so, by \eqref{tfalkwedgebe},
\begin{align}\label{3dalkwedgebe}
\begin{split}
\al \kwedge \be & = \left(-2(\al \jmult\be) + (\tr \al)\be + (\tr \be)\al - \tfrac{1}{2}(\tr \al)(\tr \be)h + \tfrac{1}{2}\lb \al, \be\ra h\right)\kwedge h.
\end{split}
\end{align}
By \eqref{gasiopal} and \eqref{3dalkwedgebe},
\begin{align}\label{gasiopalderiv}
\begin{split}
2L_{\jmult}(\al)\jmult L_{\jmult}(\be) + L_{\jmult}(\al \jmult \be) &= - \op{\al \kwedge \be}_{\ext^{2}\std} + 2L{\jmult}(\al \jmult \be)\\
&= L_{\jmult}\left (\tr \al)\be + (\tr \be)\al - \tfrac{1}{2}(\tr \al)(\tr \be)h + \tfrac{1}{2}\lb \al, \be\ra h\right).
\end{split}
\end{align}
Combining \eqref{oppsial} and \eqref{gasiopalderiv} yields
\begin{align}
\begin{split}
\op{\Psi_{p, \tau}(\al)}_{\ext^{2}\std}&\jmult\op{\Psi_{p, \tau}(\be)}_{\ext^{2}\std} - \tfrac{p}{2}\op{\Psi_{p, \tau}(\al \jmult \be)}_{\ext^{2}\std}\\
& = \tfrac{p^{2}(2\tau + 1)}{6}L_{\jmult} \left(\tr(\al)\be + \tr(\be)\al + \tfrac{2\tau - 5}{6}\tr(\al)\tr(\be)h + \tfrac{1}{2}\lb \al, \be \ra h \right).
\end{split}
\end{align}
It follows that $\tfrac{2}{p}\op{\Psi_{p, \tau}(\dum)}_{\ext^{2}\std}: (S^{2}\std, \jmult) \to \sherm(\ext^{2}\std, \lb\dum, \dum \ra)$ is a Jordan algebra isomorphism if and only if $\tau = -1/2$. This shows the equivalence of \eqref{3dnorm1} and \eqref{3dnorm3}.
\end{proof}

\begin{lemma}\label{3dlemma}
Let $(\ste, h)$ be a $3$-dimensional Euclidean vector space. The pullback of $\opmult$ via $\Psi_{1, -1/2}$ yields on $S^{2}\std$ an $O(3)$-invariant commutative multiplication $\tprod$ having the form 
\begin{align}\label{tproddefined}
\begin{split}
\al \tprod \be &= \al \jmult \be - \tfrac{1}{4}\left((\tr\al)\be + (\tr \be)\al + \lb \al, \be \ra h- (\tr \al \tr \be) h\right).
\end{split}
\end{align}
\end{lemma}
\begin{proof}
Let $p, \tau \in \rea$ satisfy $p\tau \neq 0$.
Write $\Psi = \Psi_{p, \tau}$. Using \eqref{alhbeh}, \eqref{alhhh}, \eqref{hhhh}, and \eqref{3dalkwedgebe} yields
\begin{align}\label{3dpsiproducts}
\begin{split} 
&\Psi(\al)\opmult \Psi(\be) \\&= p^{2}\left(\al \jmult \be - \left(\tfrac{\tau + 2}{6}\right)\left((\tr\al)\be + (\tr\be)\al\right) + \left((\tfrac{-16\tau^{2} + 8\tau + 17}{72})(\tr\al)(\tr\be)- \tfrac{3}{8}\lb \al, \be\ra\right)h \right)\kwedge h,
\end{split}
\end{align}
By \eqref{rictralh} applied to \eqref{3dpsiproducts},
\begin{align}\label{rictr3dpsiproducts}
\begin{split} 
\rictr(&\Psi(\al)\opmult \Psi(\be))\\
& = -\tfrac{p^{2}}{2}\left(\al \jmult \be - \left(\tfrac{\tau + 2}{6}\right)\left((\tr\al)\be + (\tr\be)\al\right) -\left( \tfrac{1}{2}\lb \al, \be\ra + (\tfrac{(8\tau - 5)(2\tau + 1)}{18})(\tr\al)(\tr\be) \right)h\right).
\end{split}
\end{align}
Tracing \eqref{rictr3dpsiproducts} yields $\scal(\Psi(\al)\opmult \Psi(\be)) = \tfrac{p^{2}}{4}\left(\lb \al, \be\ra + (\tfrac{16\tau^{2} - 1}{3})(\tr\al)(\tr\be) \right)$.
Substituting this and \eqref{rictr3dpsiproducts} in \eqref{3dpsiinverse} shows that the pullback $\Psi_{p, \tau}^{-1}(\Psi_{p, \tau}(\al)\opmult \Psi_{p, \tau}(\be))$ equals
\begin{align}\label{3dpsipullback}
\begin{split} 
p\left( \al \tprod \be - \tfrac{(2\tau + 1)}{12}\left((\tr\al)\be + (\tr \be)\al + \tfrac{1}{2\tau}\left(\lb \al, \be \ra  + \tfrac{4\tau - 1}{3}\tr \al \tr \be\right)h \right)\right).
\end{split}
\end{align}
Specializing $(p, \tau) = (1, -1/2)$ yields \eqref{tproddefined}.
\end{proof}

%The following achieves two aims. First it determines a convenient choice of $(p, \tau)$ such that the form of the resulting multiplication is aesthetically pleasing. Second, it characterizes the resulting algebra in intrinsic terms. The discussion begins with this characterization, which is what has most interest in this section.

\begin{lemma}\label{3dnonunitallemma}
Let $(\ste, h)$ be a $3$-dimensional Euclidean vector space. The multiplication $\tprod$ on $S^{2}\std$ defined in \eqref{tproddefined} is not unital. In particular, $\tprod$ is not isomorphic to the Jordan product on $S^{2}\std$.
\end{lemma}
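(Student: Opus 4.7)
The strategy is to use $O(3)$-equivariance to pin down any candidate unit up to a scalar, and then rule out the remaining one-parameter family by a single evaluation.

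First I would observe that each ingredient appearing in the definition \eqref{tproddefined} of $\tprod$---namely $h$, the trace, the Jordan product $\jmult$, and the inner product $\lb\dum,\dum\ra$---is $O(3)$-invariant, so the bilinear map $\tprod$ is $O(3)$-equivariant. Consequently, if $u \in S^{2}\std$ were a unit for $\tprod$, then $g \cdot u$ would be a unit for every $g \in O(3)$, and uniqueness of the unit in a commutative algebra would force $g \cdot u = u$ for all $g \in O(3)$. Since $S^{2}\std = S^{2}_{0}\std \oplus \spn\{h\}$ is the decomposition into $O(3)$-irreducible summands, the space of $O(3)$-invariants is one-dimensional, spanned by $h$, so necessarily $u = c\,h$ for some $c \in \rea$.

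Next I would specialize \eqref{tproddefined} at $\al = h$, using $\tr h = 3$, $\lb h, \be\ra = \tr \be$, and $h \jmult \be = \be$, to obtain
\begin{align*}
h \tprod \be \;=\; \tfrac{1}{4}\,\be + \tfrac{1}{4}(\tr \be)\,h,
\end{align*}
so that $(c\,h)\tprod \be = \tfrac{c}{4}\bigl(\be + (\tr\be)h\bigr)$. Evaluating at $\be = h$ gives $(c\,h)\tprod h = c\,h$, which forces $c = 1$ if we demand this equal $h$; but then for any nonzero $\be \in S^{2}_{0}\std$ we would have $(c\,h)\tprod \be = \tfrac{1}{4}\be \neq \be$, contradicting unitality.

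There is no substantive obstacle: the only work is the observation that the $O(3)$-invariants of $S^{2}\std$ are one-dimensional (an immediate consequence of the $O(3)$-irreducibility of $S^{2}_{0}\std$), after which a one-line calculation finishes the argument. It is worth noting that the element $h$ nevertheless satisfies $h \tprod h = h$, so $h$ is a nontrivial idempotent that acts as $\tfrac{1}{4}\Id$ on $S^{2}_{0}\std$ rather than as the identity, which also explains why the transport under $\Psi$ of the idempotent $\sH = \tfrac{1}{n-1}h \kwedge h$ of Lemma \ref{projectionkwedgelemma} is not a unit in the present three-dimensional setting.
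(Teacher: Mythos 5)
Your proof is correct and rests on the same key computation as the paper's: the evaluation $h \tprod \be = \tfrac{1}{4}\bigl(\be + (\tr\be)h\bigr)$, followed by the observation that $h$ acts as $\tfrac{1}{4}\Id$ on $S^{2}_{0}\std$. The only difference is how you reduce to the candidate $h$ — you use $O(3)$-equivariance and irreducibility of $S^{2}_{0}\std$, whereas the paper simply traces the unit equation $4h = \al + (\tr\al)h$ to force $\al = h$; both are fine, the paper's being marginally more elementary.
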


\begin{proof}
Were $\al \in S^{2}\std$ a unit, then $4h = 4\al \tprod h = \al + (\tr \al)h$. Tracing this yields $\tr \al = 3$, so $4h = \al + 3h$, implying that $\al = h$. However $h$ is not a unit for, if $\be \in S^{2}_{0}\std$, then $h\tprod \be = \tfrac{1}{4}\be$.
\end{proof}

\begin{remark}
If $\al \in S^{2}\std$ satisfies $\al \circ \al = \al$ and $\tr \al = 1$, then, by Lemma \ref{projectionkwedgelemma}, $-(h-\al)\kwedge (h-\al)$ is idempotent in $(\mcurv(\std), \opmult)$. By the proof of Lemma \ref{projectionkwedgelemma}, $\al \kwedge \al =0$, so $-(h-\al)\kwedge (h-\al) = 2\al \kwedge h - h\kwedge h = \Psi_{1, -1/2}(2\al)$, so, by Lemma \ref{3dlemma}, $2\al$ is idempotent in $(S^{2}\std, \tprod)$. This observation motivates formulating a characterization of $(\mcurv(\std), \opmult)$ in terms of rank one idempotents in $S^{2}\std$.
\end{remark}

\begin{theorem}\label{3dcharacterizationtheorem}
Let $(\ste, h)$ be a $3$-dimensional Euclidean vector space and let $O(3) = O(h)$. On $S^{2}\std$ there is up to algebra isomorphism a unique commutative multiplication $\gprod$ satisfying:
\begin{enumerate}
\item\label{gprod1} $O(3)$ acts on $(S^{2}\std, \gprod)$ by algebra automorphisms.
\item\label{gprod2} $(S^{2}\std, \gprod)$ is metrized by an $O(3)$-invariant inner product.
\item\label{gprod3} $(S^{2}\std, \gprod)$ contains no nonzero square-zero element.
\item\label{gprod4} There is an idempotent in $(S^{2}\std, \gprod)$ having rank one.
\item\label{gprod5} Any idempotent in $(S^{2}\std, \gprod)$ not a multiple of $h$ has rank one.
\item\label{gprod6} For a rank one idempotent $e$ in $(S^{2}\std, \gprod)$, $1/2$ is a multiplicity $3$ eigenvalue of $L_{\gprod}(e)$. %the spectrum of $L_{\gprod}(e)$ contains $1/2$ with multiplicity $3$.
\end{enumerate} 
The algebra $(S^{2}\std, \gprod)$ is isomorphic to $(S^{2}\std, \tprod)$ where $\tprod$ is defined in \eqref{tproddefined}.
\end{theorem}
\begin{proof}
Let $g$ be an $O(3)$-invariant inner product on $S^{2}\std$. By the Schur Lemma, an $O(3)$-invariant bilinear form on $S^{2}\std$ has the form $k(\al, \be) = A\lb \al, \be\ra + B\tr(\al)\tr(\be)$ for all $\al, \be \in S^{2}\std$. It is positive definite if and only if $A > 0$ and $A + 3B > 0$. A calculation shows
\begin{align}
k(\psi_{p, \tau}(\al), \psi_{p, \tau}(\be)) = p^{2}\left(A\lb \al, \be\ra + \left(\tfrac{\tau^{2} - 1}{3}A + \tau^{2}B\right)\tr(\al)\tr(\be)\right).
\end{align}
Taking $p^{2} = 1/A$ and $\tau^{2} = A/(A+3B)$ yields $k(\psi_{p, \tau}(\al), \psi_{p, \tau}(\be)) = \lb \al, \be\ra$. Hence it can and will be assumed that the $O(3)$-invariant inner product metrizing $(S^{2}\std, \gprod)$ is $\lb\dum, \dum \ra$.

Decomposing $S^{2}(S^{2}\ste)\tensor S^{2}\std$ into its irreducible components, it can be seen that the most general $O(3)$-equivariant commutative bilinear map $\gprod:S^{2}\std \times S^{2}\std \to S^{2}\std$ has the form
\begin{align}\label{generalgprod}
\al \gprod \be & = r\al \jmult \be + s(\tr(\al)\be + \tr(\be)\al) + t\lb \al, \be \ra h + u(\tr \al \tr \be)h,
\end{align}
for some $r, s, t, u \in \rea$. 
Because $\lb \al \gprod \be, \ga\ra - \lb \al, \be \gprod \ga\ra = (s- t)((\tr\al)\lb \be, \ga\ra - (\tr \ga)\lb \al, \be\ra)$, the invariance of $\lb\dum, \dum\ra$ with respect to $\gprod$ is equivalent to $t = s$.

By assumption $O(h)$ stabilizes $h\gprod h$ so there is $c \in \rea$ such that $h \gprod h = ch$. The assumption that there is no nonzero square-zero element implies $c \neq 0$. 
By \eqref{generalgprod}, $ch = h \gprod h = (r + 6s + 3t + 9u)h = (6 + 9s + 9u)h$, so $r + 9s + 9u = c$.

Suppose there is a rank one element of $S^{2}\std$ that is a $\gprod$-idempotent. Then there is $\si \in S^{2}\std$ satisfying $\si \circ \si = \si$, $\tr(\si) = 1 = |\si|^{2}$, and $\si \gprod \si = \la \si$ for some $\la \in \reat$ (so the rank one idempotent is $\la^{-1}\si$). For $\si_{0} =\si - \tfrac{1}{3}h$, there holds
\begin{align}
\la\si_{0} + \tfrac{\la}{3}h = \la \si = \si \gprod \si = (c + 2s)\si + (s+ u)h = (c+ 2s)\si_{0} + \tfrac{1}{3}(c + 5s + 3u)h,
\end{align}
so $c + 2s = \la = c + 5s + 3u$, which imply $s = (\la - c)/2$ and $u = -s$. This shows $\gprod$ has the form
\begin{align}\label{gprodlac}
\al \gprod_{\la, c} \be =c \al \jmult \be + \tfrac{\la - c}{2}\left((\tr\al)\be + (\tr \be)\al + \lb \al, \be \ra h- (\tr \al \tr \be) h\right).
\end{align}
Pulling $\gprod$ back via the dilation by $c^{-1}$ yields the product $\gprod_{\la} = \gprod_{\la, 1}$ on $S^{2}\std$ defined by 
\begin{align}\label{gprodladefined}
&\al \gprod_{\la} \be = \al \jmult \be + \tfrac{\la - 1}{2}\left((\tr\al)\be + (\tr \be)\al + \lb \al, \be \ra h- (\tr \al \tr \be) h\right), &&\al, \be \in S^{2}\std.
\end{align}
This establishes that if $(S^{2}\std, \gprod)$ satisfies conditions \eqref{gprod1}-\eqref{gprod4}, then there is $\la \in \reat$ such that $(S^{2}\std, \gprod)$ is isomorphic as an algebra to $(S^{2}\std, \gprod_{\la})$. For example $\la = 1$ yields the usual Jordan algebra structure $\tprod_{1} = \jmult$. However, a nontrivial $\jmult$-idempotent can have rank $1$, $2$, or $3$. 

For an $h$-orthonormal basis $\{u_{1}, u_{2}, u_{3}\}$ of $\std$, define $\ga = (2\la - 1)(u_{1}\tensor u_{1} + u_{2}\tensor u_{2}) + (1 - \la)(u_{3}\tensor u_{3})$. Observe that $\ga$ does not have rank $1$ provided that $2\la \neq 1$ and $\ga$ is not a multiple of $h$ provided that $3\la \ne 2$. A straightforward calculation using \eqref{gprodladefined} shows that $\ga \gprod_{\la} \ga = (3\la^{2} - 3\la  + 1)\ga$. Since $3\la^{2} - 3\la + 1 \geq 1/4 > 0$ for all $\la \in \rea$, this shows that $(S^{2}\std, \gprod_{\la})$ contains an idempotent that is neither rank one nor a multiple of $h$ provided that $(2\la - 1)(3\la - 2) \neq 0$.
		
Next it is shown that for $\la \in \{1/2, 2/3\}$ the algebra $(S^{2}\std, \gprod_{\la})$ contains no nonzero square-zero element and satisfies \eqref{gprod5}.
Suppose $(\al_{0} + zh)\gprod_{\la} (\al_{0} + z h) = \ep (\al_{0} + zh)$ for $\ep \in \{0, 1\}$ and $\al_{0} \in S^{2}_{0}\std$ and $z \in \rea$. Separating $(\al_{0} + zh)\tprod (\al_{0} + z h) - \ep (\al_{0} + zh)$ into its trace-free and pure trace parts yields the equations
\begin{align}\label{gprodlaidem0}
\begin{aligned}
&\al_{0} \circ \al_{0} - \tfrac{1}{3}|\al_{0}|^{2}h + ((3\la - 1)z - \ep)\al_{0} =0,& &z^{2} - \ep z + \tfrac{3\la - 1}{6}|\al_{0}|^{2} = 0.
\end{aligned}
\end{align}
If $\ep =0$ and $\la > 1/3$, the second equation of \eqref{gprodlaidem0} implies $z =0$ and $\al_{0} = 0$; this shows $(S^{2}\std, \gprod_{\la})$ contains no nonzero square-zero element if $\la \in \{1/2, 2/3\}$. Henceforth assume $\ep = 1$.
There are $h$-orthonormal $u_{1}, u_{2}, u_{3} \in \std$ and $x_{1}, x_{2} \in \rea$ such that $\al_{0}= x_{1}u_{1}\tensor u_{1} + x_{2}u_{2}\tensor u_{2} - (x_{1} + x_{2})u_{3}\tensor u_{3}$ and $|\al_{0}|^{2}= 2(x_{1}^{2} + x_{2}^{2} +x_{1}x_{2})$. Contracting the first equation of \eqref{gprodlaidem0} with each of $u_{1}\tensor u_{1}$ and $u_{2}\tensor u_{2}$ yields the equations
\begin{align}\label{gprodlaidem1}
\begin{aligned}
&0 = x_{1}^{2} - 2x_{2}^{2} - 2x_{1}x_{2} + 3((3\la -1)z - 1)x_{1},&\\&
0  = -2x_{1}^{2} +x_{2}^{2} - 2x_{1}x_{2} + 3((3\la -1)z - 1)x_{2}.
\end{aligned}
\end{align}
Appropriate linear combinations of the equations \eqref{gprodlaidem1} yield
\begin{align}\label{gprodlaidem2}
&(x_{1} - x_{2})(x_{1} + x_{2} + (3\la- 1)z - 1) = 0,& &(2x_{2} + x_{1})((3\la- 1)z - 1 - x_{1}) = 0,
\end{align}
while the second equation of \eqref{gprodlaidem0} becomes
\begin{align}\label{gprodlaidem3}
0 & = z^{2} -  z + \tfrac{3\la - 1}{3}(x_{1}^{2} + x_{2}^{2} + x_{1}x_{2}). 
\end{align}
If $x_{1} = x_{2}$, the second equation of \eqref{gprodlaidem2} yields $x_{1}((3\la - 1)z -1 - x_{1}) = 0$, so either $x_{1} = 0$, in which case $z \in \{0, 1\}$ and $\ga$ is a multiple of $h$, or $x_{1} = (3\la - 1)z - 1$. In the latter case, \eqref{gprodlaidem3} yields 
\begin{align}\label{gprodlaidem4} 
\begin{split}
0 &=z^{2} - z + (3\la - 1)x_{1}^{2} = ((3\la - 1)^{3} + 1)z^{2} - (2(3\la - 1)^{2} + 1)z + (3\la -1)\\
& = 9\la(3\la^{2} - 3\la + 1)z^{2} + 3(6\la^{2} - 4\la + 1)z + 3\la - 1\\
&^= (3(3\la^{2} - 3\la + 1)z - (3\la - 1))(3\la z - 1).
\end{split}
\end{align}
If $z = 1/(3\la)$ there results $\al_{0} + zh = \la^{-1}u_{3}\tensor u_{3}$, which has rank $1$. If $z = \tfrac{3\la - 1}{3(3\la^{2} - 3\la + 1)}$ there results 
\begin{align}
\al_{0} + z h = \tfrac{1}{(3\la^{2} - 3\la + 1)}\left((2\la - 1)(u_{1}\tensor u_{1} + u_{2}\tensor u_{2}) + (1 - \la)u_{3}\tensor u_{3}\right).
\end{align}
As observed before, this element is idempotent, but it has rank one if $\la = 1/2$ and is a multiple of $h$ if $\la = 2/3$.

If $x_{1} \neq x_{2}$, the first equation of \eqref{gprodlaidem2} yields $(3\la - 1)z = 1 - x_{1} - x_{2}$ and the second equation of \eqref{gprodlaidem2} becomes $0 = (2x_{2} + x_{1})(2x_{1} + x_{2})$. Without loss of generality it can be assumed that $x_{2} = -2x_{1}$ (otherwise interchange the indices $1$ and $2$) in which case $(3\la - 1)z - 1  = - x_{1} - x_{2} =  x_{1}$. As in \eqref{gprodlaidem4}, in \eqref{gprodlaidem3} this yields
\begin{align}
\begin{split}
0 &=z^{2} - z + \tfrac{(3\la - 1)}{3}(x_{1}^{2}  + 4x_{1}^{2} - x_{1}^{2})= z^{2} - z + (3\la - 1)x_{1}^{2}\\&= (3(3\la^{2} - 3\la + 1)z - (3\la - 1))(3\la z - 1).
\end{split}
\end{align}
If $z = 1/(3\la)$ there results $x_{1} = -1/(3\la)$ and $x_{2} = 2/(3\la)$ so that $\al_{0} + zh = \la^{-1}u_{2}\tensor u_{2}$, which has rank $1$. If $z = \tfrac{3\la - 1}{3(3\la^{2} - 3\la + 1)}$, and $\la = 1/2$, then $z = 2/3$, $x_{1} = -2/3$, and $x_{2} = 4/3$, which yields $\al_{0} + zh = 2u_{2}\tensor u_{2}$, which has rank one; while if $\la = 2/3$, then $z = 1$, $x_{1} = 0$, and $x_{2} =0$, which yields $\al_{0} + zh = h$. This shows that $\gprod_{\la}$ satisfies \eqref{gprod5} for $\la \in \{1/2, 2/3\}$.

For an $h$-orthonormal basis $\{u_{1}, u_{2}, u_{3}\}$ of $\std$, define $e_{i} = u_{i}\tensor u_{i} \in S^{2}\std$ and $f_{i\join j} = \sqrt{2}u_{i}\sprod u_{j} \in S^{2}_{0}\std$, where distinct indices take distinct values from $\{1, 2, 3\}$ and $i\join j$ is the complement of $\{i, j\}$ in $\{1, 2, 3\}$ (so $1 \join 2 = 3$ and $i\join (i\join j) = j$). Then $\{e_{i}, f_{i}: 1 \leq i \leq 3\}$ is an orthonormal basis of $S^{2}\std$. Calculations using \eqref{gprodlac} show 
\begin{align}\label{gprodrelations}
\begin{aligned}
&e_{i}\gprod_{\la} e_{i} = \la e_{i},&& e_{i}\gprod_{\la} e_{j} = \tfrac{1-\la}{2}e_{i\join j},&& e_{i}\gprod_{\la} f_{i} = \tfrac{\la - 1}{2}f_{i},&\\
& e_{i}\gprod_{\la} f_{j} = \tfrac{\la}{2}f_{j},&& f_{i}\gprod_{\la} f_{i} = \tfrac{\la}{2}h - \tfrac{1}{2}e_{i}, &&f_{i}\gprod_{\la} f_{j} = \tfrac{1}{2\sqrt{2}}f_{i\join j}. 
\end{aligned}
\end{align}
By \eqref{gprodrelations}, the basis $e_{i}$, $f_{j}$, $f_{i \join j}$, $e_{j} + e_{i\join j}$, $f_{i}$, and $e_{j} - e_{i\join j}$ comprises eigenvectors of of $L_{\gprod_{\la}}(\la^{-1}e_{i})$ having respective eigenvalues $1$, $1/2$, $1/2$, $\tfrac{1-\la}{2\la}$, $\tfrac{\la - 1}{2\la}$, and $\tfrac{\la - 1}{2\la}$. The four values $1$, $1/2$, $\tfrac{1-\la}{2\la}$, and $\tfrac{\la - 1}{2\la}$ are pairwise distinct if $\la \notin \{-1, 1/3, 1/2\}$. The $1/2$ eigenspace always contains $f_{j}$ and $f_{i \join j}$, and the multiplicity of the eigenvalue $1/2$ is greater than $2$ if and only if $(1 - \la)/(2\la) = 1/2$, which occurs if and only if $\la = 1/2$. The eigenvalues of $L_{\gprod_{1/2}}(2e_{i})$ are $1$, with eigenspace spanned by $e_{i}$; $1/2$ with eigenspace spanned by $f_{j}$, $f_{i \join j}$, and $e_{j} + e_{i\join j}$; and $-1/2$, with eigenspace spanned by $f_{i}$ and $e_{j} - e_{i\join j}$. Because any rank one idempotent is in the $O(3)$ orbit of a multiple of $e_{1}$, this proves \eqref{gprod6} and completes the proof.
\end{proof}

\begin{remark}
That $1/2$ is an eigenvalue of multiplicity at least $2$ of $L_{\gprod_{\la}}(\la^{-1}e_{i})$ is a consequence of the $O(3)$-invariance of $\gprod_{\la}$. Differentiating along a one-parameter family of rank one idempotents passing through $\la^{-1}e_{i}$ shows that a vector tangent to the $O(3)$-orbit passing through $\la^{-1}e_{i}$ is an eigenvector of $L_{\gprod_{\la}}(\la^{-1}e_{i})$ with eigenvalue $1/2$. The content of \eqref{gprod6} of Theorem \ref{3dcharacterizationtheorem} is that, for $\la = 1/2$, the $1/2$ eigenspace of $L_{\gprod_{1/2}}(2e_{i})$ has an extra third dimension.
\end{remark}

\begin{remark}
The conditions in Theorem \ref{3dcharacterizationtheorem} are all necessary and serve to exclude certain particularly symmetric $O(3)$-invariant algebra structures on $S^{2}\std$.

As mentioned in the proof, the algebra $\sherm(\ste, h)$ satisfies conditions \eqref{gprod1}-\eqref{gprod4} of Theorem \ref{3dcharacterizationtheorem}, but fails \eqref{gprod5} (and also \eqref{gprod6}) because it contains rank two idempotents.

Consider $S^{2}\std$ with the $O(3)$-invariant multiplication 
\begin{align}
\al \ctimes \be = \al \jmult \be - \tfrac{5}{12}\left(\tr(\al)\be + \tr(\be)\al + \lb \al, \be\ra h\right) + \tfrac{4}{9}\tr(\al)\tr(\be)h. 
\end{align}
It can be checked that $(S^{2}\std, \ctimes)$ is exact and Killing metrized with $\tau_{\ctimes} = \tfrac{5}{8}\lb \dum, \dum \ra$, so that $\Aut(S^{2}\std, \ctimes) = O(3)$ (modulo inconsequential scalar factors, this algebra is what is in \cite{Fox-simplicial} called the conformal extension of $(\sherm_{0}(\ste, h), \jrd)$). That $(S^{2}\std, \ctimes)$ contains no square-zero element can be checked directly or follows from results in \cite{Fox-simplicial}. However, $(S^{2}\std, \ctimes)$ contains no rank one idempotent, for if $\si \in S^{2}\std$ has rank $1$, then $36 \si \ctimes \si = 6\tr(\si)\si + \tr(\si)^{2}h$. 

As the proof of Theorem \ref{3dcharacterizationtheorem} shows, \eqref{gprod6} excludes $(S^{2}\std, \gprod_{2/3})$ which is an interesting algebra because, in addition to satisfying \eqref{gprod1}-\eqref{gprod5}, it is Killing metrized, as follows from Lemma \ref{3dlapropertieslemma}.
\end{remark}

\begin{lemma}\label{3dlapropertieslemma}
Let $(\ste, h)$ be a $3$-dimensional Euclidean vector space. For $\la \in \reat$, let $\gprod_{\la}$ be as in \eqref{gprodladefined}. 
\begin{enumerate}
\item\label{gprod8} $(S^{2}\std, \gprod_{\la})$ is simple if $\la \notin \{1, 1/3\}$.
\item\label{gprod9} There holds $\tr L_{\gprod_{\la}} = \tfrac{5\la - 1}{2}\lb h, \dum \ra$. In particular, $\gprod_{\la}$ is exact if and only if $\la = 1/5$.
\item\label{gprod10} For all $\al, \be \in S^{2}\std$, there hold 
\begin{align}\label{gprodlatraceforms}
\begin{aligned}
4(5\la - 1)\tr L_{\gprod_{\la}}(\al \gprod_{\la} \be) + 4(\la - 1)\tr L_{\gprod_{\la}}(\al)\tr L_{\gprod_{\la}}(\be) = (5\la - 1)^{2}(3\la - 1)\lb \al, \be \ra,\\
4(5\la - 1)^{2}\tau_{\gprod_{\la}}(\al , \be) -4\la (3\la - 2)\tr L_{\gprod_{\la}}(\al)\tr L_{\gprod_{\la}}(\be) = (5\la - 1)^{2}(6\la^{2} - 4\la + 3)\lb \al, \be \ra.
\end{aligned}
\end{align}
In particular, $(S^{2}\std, \gprod_{\la})$ is Killing metrized if and only if $\la = 2/3$.
\item\label{gprod11} If $\la \neq 1/5$, the automorphism group $\Aut(S^{2}\std, \gprod_{\la})$ is isomorphic to $SO(3)$ in its induced action on $S^{2}\std$.
\end{enumerate}
\end{lemma}

\begin{proof}
Suppose $\ideal \subset S^{2}\std$ is a $\gprod_{\la}$-ideal and there are $\al_{0} \in S^{2}_{0}\std$ and $z \in \rea$ such that $0 \neq \al_{0} + zh \in \ideal$. Suppose $\la \notin\{1, 1/3\}$. Then $\tfrac{3\la - 1}{2}\al_{0} + zh = h\gprod_{\la}(\al_{0} + zh) \in \ideal$, so $\tfrac{3}{2}(1 - \la)zh = h\gprod_{\la}(\al_{0} + zh)  + \tfrac{1 - 3\la}{2}(\al_{0} + zh) \in \ideal$. If $z \neq 0$ this implies $h \in \ideal$. Otherwise there is $0 \neq \al_{0} \in \ideal \cap S^{2}_{0}\std$. As $4h\gprod_{\la}(\al_{0}\gprod_{\la}\al_{0}) = 2(3\la - 1)\al_{0}\gprod_{\la}\al_{0} - (\la - 1)(3\la - 1)|\al_{0}|^{2}h$, in this case, $(1-\la)(3\la - 1)|\al_{0}^{2}h = 4h\gprod_{\la}(\al_{0}\gprod_{\la}\al_{0}) + 2(1 - 3\la)\al_{0}\gprod_{\la}\al_{0} \in \ideal$. Because $|\al_{0}|^{2} \neq 0$, this implies $h \in \ideal$. In either case, because $L_{\gprod_{\la}}(h)$ is invertible, that $h \in \ideal$ implies $\ideal = S^{2}\std$. This shows $(S^{2}\std, \gprod_{\la})$ is simple if $\la \notin \{1, 1/3\}$. (When $\la = 1/3$, $L_{\gprod_{1/3}}(h)$ annihilates $S^{2}_{0}\std$, so $h$ generates a proper ideal, while when $\la = 1$, $\gprod_{1} = \jmult$ and $S^{2}_{0}\std$ is a proper ideal.)

Claims \eqref{gprod9} and \eqref{gprod10} can be proved by straightforward though tedious calculations using \eqref{gprodrelations}. Their principal relevance here is to prove \eqref{gprod10}. An alternative approach is the following. Identify $S^{2}\std_{0} \oplus \rea$ with $S^{2}\std$ via the map $(\al_{0}, a) \to \al_{0} + a h$. With respect to this identification the multiplication endomorphism $L_{\gprod_{\la}}(\al_{0}, a)$ has the block form
\begin{align}\label{gprodlablock}
L_{\gprod_{\la}}(\al_{0}, a) = \begin{pmatrix} L_{\jrd}(\al_{0}) + \tfrac{3\la - 1}{2}a\Id_{S^{2}_{0}\std} & \tfrac{3\la - 1}{2}\al_{0}\\ \tfrac{3\la - 1}{6}\lb \al_{0}, \dum\ra & a \end{pmatrix},
\end{align}
in which $\jrd$ is the trace-free Jordan product $\al_{0}\jrd \be_{0} = \al_{0}\jmult\be_{0} - \tfrac{1}{3}\lb \al_{0}, \be_{0}\ra h$. Claim \eqref{gprod9} follows by tracing \eqref{gprodlablock}, while \eqref{gprodlatraceforms} follow by straightforward computations using \eqref{gprodlablock} and the fact that $\tau_{\jrd}(\al_{0}, \be_{0}) = \tfrac{7}{12}\lb \al_{0}, \be_{0}\ra$, which is proved in \cite{Fox-simplicial}.

Suppose $\la \neq 1/5$. By \eqref{gprod10}, an algebra automorphism $\phi$ of $\gprod_{\la}$ preserves $\lb\dum, \dum \ra$. By \eqref{gprod9}, $\tfrac{5\la - 1}{2}\lb \phi(h), \dum \ra = \tr L_{\gprod_{\la}}(\phi(h))  = \tr L_{\gprod_{\la}}(h)= \tfrac{5\la - 1}{2}\lb h, \dum \ra$, so $\phi(h) = h$. It follows that $0 = \phi(\al)\gprod_{\la}\phi(\be) - \phi(\al \gprod_{\la}\be) = \phi(\al)\jmult \phi(\be) - \phi(\al \jmult \be)$, so that $\phi$ is an automorphism of the Jordan algebra $(S^{2}\std, \jmult)$. Every automorphism of $(S^{2}\std, \jmult)$ is given by the action of an element of $O(3)$ \cite[Theorem VII.$13$]{Koecher}. This proves \eqref{gprod10}.
\end{proof}

\begin{remark}
The $\la = 1$ case of \eqref{gprod9} and \eqref{gprod10} of Lemma \ref{3dlapropertieslemma} recovers identities for the Jordan algebra $(\sherm(\ste, h), \jmult)$ that can be found in \cite[Proposition III.4.2 and Lemma VI.1.1]{Faraut-Koranyi}.
\end{remark}

Corollary \ref{3dpropertiescorollary} summarizes the result of combining Theorem \ref{3dcharacterizationtheorem} and Lemma \ref{3dlapropertieslemma} for $\tprod$.
\begin{corollary}\label{3dpropertiescorollary}
Let $(\ste, h)$ be a $3$-dimensional Euclidean vector space. The map $\Psi: (S^{2}\std, \tprod) \to (\mcurv(\std), \opmult)$ defined by $\Psi(\al) = (\al - \tfrac{1}{2}\tr(\al)h)\kwedge h)$ is an algebra isomorphism, where the commutative multiplication $\tprod$ on $S^{2}\std$ is defined by
\begin{align}\label{tproddefined0}
\al \tprod \be &= \al \jmult \be - \tfrac{1}{4}\left((\tr\al)\be + (\tr \be)\al + \lb \al, \be \ra h- (\tr \al \tr \be) h\right)
\end{align}
and is characterized as the unique, up to algebra isomorphism, commutative multiplication on $S^{2}\std$ satisfying: 
\begin{enumerate}
\item\label{tprod1} $O(3)$ acts on $(S^{2}\std, \tprod)$ by algebra automorphisms.
\item\label{tprod2} $(S^{2}\std, \tprod)$ is metrized by an $O(3)$-invariant inner product.
\item\label{tprod3} $(S^{2}\std, \tprod)$ contains no nonzero square-zero element.
\item\label{tprod4} There is an idempotent in $(S^{2}\std, \tprod)$ having rank one.
\item\label{tprod5} Any idempotent in $(S^{2}\std, \tprod)$ not a multiple of $h$ has rank one.
\item\label{tprod6} For a rank one idempotent $e$ in $(S^{2}\std, \tprod)$, the spectrum of $L_{\tprod}(e)$ contains $1/2$ with multiplicity $3$.
\end{enumerate} 
Moreover, the multiplication $\tprod$ has the following properties:
\begin{enumerate}
\item\label{tprod5b} An idempotent in $(S^{2}\std, \tprod)$ distinct from $h$ has the form $\al = 2u \tensor u$ for a unit norm $u \in \std$.
\item\label{tprod7} For an idempotent $\al$ as in \eqref{tprod5}, the eigenvalues of $L_{\tprod}(\al)$  are $1$, with multiplicity $1$, $1/2$ with multiplicity $3$, and $-1/2$ with multiplicity $2$.
\item\label{tprod8} $(S^{2}\std, \tprod)$ is simple.
\item\label{tprod9} For all $\al, \be \in S^{2}\std$,
\begin{align}
\tfrac{16}{3}\left(\tr L_{\tprod}(\al \tprod \be) - \tfrac{1}{3}\tr L_{\tprod}(\al)\tr L_{\tprod}(\be)\right) = \lb \al, \be \ra = \tfrac{8}{5}\left(\tau_{\tprod}(\al, \be) + \tr L_{\tprod}(\al)\tr L_{\tprod}(\be)\right) 
\end{align}
\item\label{tprod10} The full automorphism group of $(S^{2}\std, \tprod)$ is $O(3)$ in its induced action on $S^{2}\std$.
\end{enumerate}
\end{corollary}

The idempotents in $(S^{2}\std, \tprod)$ are parameterized by the disjoint union of a point, corresponding with $h$, and a projective plane, corresponding with the $O(3)$-orbit of a rank one symmetric bilinear form having norm $2$.

\section{Characterization of the subalgebra of anti-self-dual Weyl tensors when \texorpdfstring{$\dim \ste = 4$}{}}\label{4dsection}
This section proves Theorem \ref{mcurvweyltheorem}, that shows that, when $\dim \ste = 4$, the subalgebra of anti-self-dual Weyl tensors is isomorphic to the space of trace-free endomorphisms of a $3$-dimensional vector space equipped with the trace-free Jordan product. The proof is conceptual in the sense that it relies on the description of $\opmult$ in terms of curvature operators. On the other hand, the approach is special to $\dim \ste = 4$. Lemma \ref{4dsubalgebralemma} yields an alternative proof, that, while more computational, is based on an approach viable in all dimensions.

Let $(\ste, h)$ be an $n$-dimensional Euclidean vector space. Let $\ep_{i_{1}\dots i_{n}}$ be the volume $n$-form determined a by choice of orientation of $\ste$ and evaluating to $1$ when paired with the wedge product of the vectors of an ordered $h$-orthonormal basis consistent with the chosen orientation. The polyvector $\ep^{i_{1}\dots i_{n}}$ obtained by raising indices satisfies
$\ep_{i_{1}\dots i_{p}k_{1}\dots k_{n-p}}\ep^{j_{1}\dots j_{p}k_{1}\dots k_{n-p}} = p!(n-p)!\delta_{[i_{1}}\,^{[j_{1}}\delta_{i_{2}}\,^{j_{2}}\dots\delta_{i_{p-1}}\,^{j_{p-1}} \delta_{i_{p}]}\,^{j_{p}]}$.
Suppose $\dim \ste = 4$. Then this yields the identities
\begin{align}\label{epid4}
&\ep_{abcd}\ep^{abcd} = 4!,&& \ep_{iabc}\ep^{jabc} = 6\delta_{i}\,^{j}, &
&\ep_{ijab}\ep^{klab} = 4\delta_{[i}\,^{[k}\delta_{j]}\,^{l]},&& \ep_{ijkp}\ep^{abcp} = 6\delta_{[i}\,^{[a}\delta_{j}\,^{b}\delta_{k]}\,^{c]}.
\end{align}
The Hodge star operator $\star \in \eno(\ext^{2}\std)$ is defined by $(\star \al)_{ij} = \tfrac{1}{2}\ep_{ij}\,^{pq}\al_{pq}$. By \eqref{epid4}, $\star \circ \star = \Id_{\ext^{2}\std}$, so $\ext^{2}\std$ decomposes into the two three-dimensional $\star$-eigenspaces $\pmdf\std$, denominated the self-dual and anti-self-dual two-forms on $\ste$. Note that, with the conventions used here, $\al \dwedge \star \be = \tfrac{1}{2}\lb \al, \be\ra \ep$.

\begin{lemma}\label{albecontractlemma}
Let $(\ste, h, \ep)$ be an oriented $4$-dimensional Euclidean vector space. 
\begin{enumerate}
\item\label{4dso1} If $\al, \be \in \sdf\std$ or $\al, \be \in \asdf\std$, then $ (\al \jmult \be) =  -\tfrac{1}{4}\lb \al, \be \ra h$. 
\item\label{4dso3} For $\al, \be \in \ext^{2}\std$, $[\star \al, \star \be] = [\al, \be]$ and $\star [\al, \be] = [\star \al, \be]$. In particular, $[\sdf\std, \asdf\std] =\{0\}$ and $\lb \sdf\std, \asdf\std\ra = \{0\}$.
% if $\al \in \sdf\std$ and $\be \in \asdf\std$, then $[\al, \be] = 0$ and $\lb \al, \be \ra = 0$. 
Under the identification of $(\ext^{2}\std, [\dum, \dum])$ with $\so(4)$, the subspaces $\pmdf\std$ are commuting Lie ideals identified with commuting ideals of $\so(4)$ isomorphic to $\so(3)$.
\end{enumerate}
\end{lemma}
\begin{proof}
For $\al, \be \in \ext^{2}\std$, using \eqref{epid4} yields
\begin{align}\label{staralstarbecomp}
\begin{split}
((\star \be)\circ (\star \al))_{ij} & = -(\star \al)_{pi}(\star \be)^{pk}h_{kj} 
= -\tfrac{1}{4}\ep_{iabp}\ep^{kcdp}\al^{ab}\be_{cd}h_{kj}= -\tfrac{3}{2}\delta_{[i}\,^{[k}\delta_{a}\,^{c}\delta_{b]}\,^{d]}\al^{ab}\be_{cd}h_{kj} \\
&= -\tfrac{1}{2}\lb \al, \be \ra h_{ij} - (\al \circ \be)_{ij}.
\end{split}
\end{align}
Symmetrizing \eqref{staralstarbecomp} in $\al$ and $\be$ yields \eqref{4dso1}. Antisymmetrizing \eqref{staralstarbecomp} in $\al$ and $\be$ yields $[\star \al, \star \be] = [\al, \be]$. Because $\star$ is self-adjoint and $\ad(\al) = [\al, \dum]$ is anti-self-adjoint, for any $\ga \in \ext^{2}\std$,
\begin{align}
\lb \star [\al, \be], \ga\ra = \lb [\al, \be], \star \ga\ra = -\lb \be, [\al, \star \ga]\ra = -\lb \be, [\star \al, \ga]\ra = \lb [\star \al, \be], \ga\ra,
\end{align}
showing that $\star [\al, \be] = [\star \al, \be]$. It follows that $\pmdf\std$ are commuting Lie ideals in $\ext^{2}\std$. This shows the first part of \eqref{4dso3}. The claimed isomorphisms with $\so(4)$ and $\so(3)$ follow from standard representation theory and are omitted. 
\end{proof}

\begin{lemma}\label{starmcurvweylemma}
Let $(\ste, h, \ep)$ be an oriented $4$-dimensional Euclidean vector space. For $\sX \in \mcurvweyl(\std)$, define $(\star \sX)_{ijkl} = \tfrac{1}{2}\ep_{ij}\,^{ab}\sX_{abkl}$. Then $(\star \sX)_{ijkl} \in \mcurvweyl(\std)$, and $\star:\mcurvweyl(\std) \to \mcurvweyl(\std)$ is a linear involution satisfying $\op{\star \sX}_{\ext^{2}\std} = \star \circ \op{\sX}_{\ext^{2}\std} = \op{\sX}_{\ext^{2}\std}\circ \star$.
\end{lemma}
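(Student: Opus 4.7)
The plan is to establish the lemma via four claims: the skew-symmetries of $\star\sX$, the operator identity $\op{\star\sX}_{\ext^{2}\std} = \star\circ\op{\sX}_{\ext^{2}\std}$, the involution $\star^{2} = \Id$, and the inclusion $\star\sX \in \mcurvweyl(\std)$. Once the last is shown, both $\star$ and $\op{\sX}_{\ext^{2}\std}$ are self-adjoint and $\op{\star\sX}_{\ext^{2}\std}$ is self-adjoint by Lemma \ref{curvoplemma}, so the first operator identity forces $[\star, \op{\sX}_{\ext^{2}\std}] = 0$ and yields the second identity $\star\circ\op{\sX}_{\ext^{2}\std} = \op{\sX}_{\ext^{2}\std}\circ\star$. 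The skew-symmetries of $\star\sX$ in the first and last pairs are immediate from the antisymmetry of $\ep$ and of $\sX_{abkl}$ in $kl$. The involution property reduces, for any $T$ with $T_{[ij]kl} = T_{ijkl}$, to the computation
\begin{align*}
(\star\star T)_{ijkl} = \tfrac{1}{4}\ep_{ij}^{\ \ ab}\ep_{ab}^{\ \ cd}T_{cdkl} = \delta_{[i}^{\ c}\delta_{j]}^{\ d}T_{cdkl} = T_{ijkl},
\end{align*}
using \eqref{epid4}. The formula $\op{T}(\al)_{ij} = -\tfrac{1}{2}\al^{pq}T_{ijpq}$ of \eqref{sxskew} uses only that $T$ is skew in its last pair, which $\star\sX$ inherits from $\sX$; substituting $T = \star\sX$ and the definition of $\star$ yields
\begin{align*}
-\tfrac{1}{2}\al^{pq}(\star\sX)_{ijpq} = -\tfrac{1}{4}\ep_{ij}^{\ \ ab}\al^{pq}\sX_{abpq} = \tfrac{1}{2}\ep_{ij}^{\ \ ab}\op{\sX}_{\ext^{2}\std}(\al)_{ab} = \star(\op{\sX}_{\ext^{2}\std}(\al))_{ij},
\end{align*}
which coincides with $\op{\star\sX}_{\ext^{2}\std}(\al)_{ij}$ once $\star\sX \in \mcurv(\std)$.

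The crux is to show $\star\sX \in \mcurvweyl(\std)$, namely pair-swap symmetry, first Bianchi, and Ricci-flatness. Pair-swap of $\star\sX$ is equivalent to self-adjointness of $\star\circ\op{\sX}_{\ext^{2}\std}$, hence to $[\star, \op{\sX}_{\ext^{2}\std}] = 0$; equivalently, $\op{\sX}_{\ext^{2}\std}$ preserves the $\pm 1$-eigenspaces $\pmdf\std$ of $\star$. By self-adjointness of $\op{\sX}$ it suffices to show $\lb\op{\sX}(\be), \ga\ra = 0$ for all $\be \in \sdf\std$ and $\ga \in \asdf\std$, and by \eqref{sxalcdotbe} this equals $-\tfrac{1}{6}\lb\sX, \be\cdot\ga\ra$, so it suffices that $\be\cdot\ga \in \mcurvric(\std)$ (which is $\lb\dum,\dum\ra$-orthogonal to $\mcurvweyl(\std)$). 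Lemma \ref{albecontractlemma} gives $\lb\be,\ga\ra = 0$, hence by \eqref{rictralcdotbe} $\scal(\be\cdot\ga) = 0$, and \eqref{tfalcdotbe} in dimension four reduces to $\tf(\be\cdot\ga) = \be\cdot\ga + 3(\be\jmult\ga)\kwedge h$. Under $SO(4) \cong (SU(2)\times SU(2))/\{\pm 1\}$, the irreducible modules $\sdf\std$, $\asdf\std$, $S^{2}_{0}\std \cong \mcurvric(\std)$, $\mcurvweylp(\std)$, and $\mcurvweylm(\std)$ correspond to $(1,0)$, $(0,1)$, $(1,1)$, $(2,0)$, and $(0,2)$ respectively, so $\sdf\std\tensor\asdf\std \cong (1,1)$ shares no irreducible component with $\mcurvweyl(\std)$, and the $SO(4)$-equivariant map $\be\tensor\ga \mapsto \tf(\be\cdot\ga)$ must vanish by Schur's lemma. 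Hence $\tf(\be\cdot\ga) = 0$ and $\be\cdot\ga \in \mcurvric(\std)$.

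With pair-swap established, $\star\sX$ lies in $S^{2}(\ext^{2}\std) = \mcurv(\std) \oplus \rea\cdot\ep$ (an orthogonal direct sum in dimension four, since a tensor in $S^{2}(\ext^{2}\std)$ satisfies the first Bianchi identity iff it is orthogonal to the totally antisymmetric $\ep$); the first Bianchi identity for $\star\sX$ is therefore equivalent to $\lb\star\sX, \ep\ra = 0$, and an application of \eqref{epid4} identifies this pairing with $-2\scal(\sX) = 0$ for Weyl $\sX$. Ricci-flatness is a direct computation: $\rictr(\star\sX)_{ij} = h^{pq}(\star\sX)_{pijq}$ reduces to a multiple of $\ep_{i}^{\ qab}\sX_{abqj}$, which vanishes because $\ep_{i}^{\ qab}$ is totally antisymmetric in $q,a,b$ while $\sX_{[abq]j} = 0$ by the first Bianchi identity for $\sX$. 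The main obstacle of the proof is the commutation $[\star, \op{\sX}_{\ext^{2}\std}] = 0$, for which the Schur-lemma reduction via $\sdf\std\tensor\asdf\std \cong \mcurvric(\std)$ is the most conceptual route; an alternative is to verify commutation directly on the spanning set of $\mcurvweyl(\std)$ provided by Lemma \ref{mcurvweylspanlemma}.
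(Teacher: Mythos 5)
Your proof is correct, but it reaches the crucial inclusion $\star\sX \in \mcurvweyl(\std)$ by a genuinely different route than the paper. The paper's proof rests on the dimension-dependent Lovelock identity $\sX_{[ij}\,^{[ab}\delta_{k]}\,^{c]} = 0$, obtained from Weyl's criterion (Lemma \ref{weylcriterion}) after checking the tensor is trace-free in dimension $4$; contracting with $\ep_{abcl}$ gives $(\star\sX)_{l[ijk]} = 0$ directly, the Ricci trace vanishes by the same computation you give, and the pair-swap symmetry then comes for free from membership in $\mcurv(\std)$. You invert this order: you first prove $[\star, \op{\sX}_{\ext^{2}\std}] = 0$ by a Schur-lemma argument showing $\tf(\be\cdot\ga) = 0$ for $\be \in \sdf\std$, $\ga \in \asdf\std$ (which is precisely \eqref{mixedcdot} of the paper's Lemma \ref{albeselfduallemma} --- a fact the paper derives only \emph{afterwards}, as a consequence of this lemma, so your independent proof of it avoids circularity), deduce pair-swap symmetry, and only then obtain the Bianchi identity from orthogonality to $\ep$ in $S^{2}(\ext^{2}\std)$. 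Each approach has something to recommend it: the paper's tensorial argument introduces the Lovelock identity \eqref{lovelockid}, which is reused essentially in Lemma \ref{lovelocklemma} and in the Bel--Robinson computations, whereas your representation-theoretic argument is more conceptual, requires no dimension-dependent index gymnastics beyond \eqref{epid4}, and delivers the fusion fact $\sdf\std \tensor \asdf\std \simeq \mcurvric(\std)$ as a byproduct. One small point of care, which you handle correctly: the operator $\al \mapsto -\tfrac{1}{2}\al^{pq}(\star\sX)_{ijpq}$ is defined before one knows $\star\sX \in \mcurv(\std)$, using only skewness in the pairs, and the identification with $\op{\star\sX}$ in the sense of Lemma \ref{curvoplemma} is made only after the inclusion is established.
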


\begin{proof}
By definition $(\star \sX)_{ijkl} = (\star \sX)_{[ij]kl} = (\star\sX)_{ij[kl]}$, so $\star \sX \in S^{2}(\ext^{2}\std)$. To show $\star \sX \in \mcurv(\std)$ it suffices to show that $(\star \sX)_{[ijk]l}$ vanishes. For $\sX \in \mcurvweyl(\std)$, if $n = \dim \ste$, tracing $\sX_{[ij}\,^{[ab}\delta_{k]}\,^{c]}$ in $k$ and $c$ yields a multiple of $(n-4)\sX_{ij}\,^{ab}$. Since this vanishes if $n = 4$, and, by \cite[Theorem $5.7.$A]{Weyl}, an $O(n)$-module of covariant trace-free tensors on an $n$-dimensional vector space having symmetries corresponding to a Young diagram for which the sum of the lengths of the first two columns is greater than $n$ is trivial, when $\dim \ste = 4$, 
\begin{align}\label{lovelockid}
\sX_{[ij}\,^{[ab}\delta_{k]}\,^{c]} = 0
\end{align}
for any $\sX \in \mcurvweyl(\std)$.
(The identity is sometimes called a \emph{Lovelock identity} because similar identities generalizing it are discussed in \cite{Lovelock}.)
Contracting \eqref{lovelockid} with $\ep_{abcl}$ yields $0 = \ep_{abcl}\sX_{[ij}\,^{[ab}\delta_{k]}\,^{c]} = -2(\star \sX)_{l[ijk]}$, so $\star \sX \in \mcurv(\std)$. There holds $\rictr(\star\sX)_{jk}  = \tfrac{1}{2}\ep_{j}\,^{pab}\sX_{abpk} = \tfrac{1}{2}\ep_{j}\,^{abp}\sX_{[abp]k} = 0$, so $\star \sX \in \mcurvweyl(\std)$. 
For $\al_{ij} \in \ext^{2}\std$,
\begin{align}\label{staropsx1}
\begin{split}
(\star \op{\sX}_{\ext^{2}\std}(\al))_{ij} &= \tfrac{1}{2}\ep_{ij}\,^{ab}\op{\sX}_{\ext^{2}\std}(\al)_{ab} = -\tfrac{1}{4}\ep_{ij}\,^{ab}\al^{pq}\sX_{abpq} = -\tfrac{1}{2}\al^{pq}(\star \sX)_{ijpq} = \op{\star \sX}_{\ext^{2}\std}(\al)_{ij}.
\end{split}
\end{align}
That $\star \sX \in \mcurvweyl(\std)$ implies $(\star\sX)_{ijkl}$ has all the other symmetries that this inclusion implies, for example $\tfrac{1}{2}\ep_{ij}\,^{ab}\sX_{abkl} = (\star \sX)_{ijkl} = (\star \sX)_{klij}  = \tfrac{1}{2}\ep_{kl}\,^{ab}\sX_{abij}$. 
It follows that
\begin{align}
\begin{split}
\op{\sX}_{\ext^{2}\std}(\star \al)_{ij} &= -\tfrac{1}{2}(\star \al)^{ab}\sX_{ijab} = -\tfrac{1}{4}\ep^{abpq}\al_{pq}\sX_{ijab}= -\tfrac{1}{4}\al^{pq}\ep_{pq}\,^{ab}\sX_{abij}\\
&= -\tfrac{1}{4}\al^{pq}\ep_{ij}\,^{ab}\sX_{abpq} = -\tfrac{1}{2}\al^{pq}(\star \sX)_{ijpq} = \op{\star\sX}_{\ext^{2}\std}(\al)_{ij},
\end{split}
\end{align}
which, with \eqref{staropsx1}, shows that $\op{\sX}_{\ext^{2}\std} \circ \star = \op{\star\sX}_{\ext^{2}\std} = \star \circ \op{\sX}_{\ext^{2}\std}$. 
\end{proof}

For a $4$-dimensional oriented Euclidean vector space $(\ste, h, \ep)$, define $\mcurvweylpm(\std) = \{\sX \in \mcurvweyl(\std): \star \sX = \pm \sX\}$.
Because $\lb \star \sX, \sY \ra = \lb \sX, \star \sY \ra$, $\mcurvweylp(\std)$ and $\mcurvweylm(\std)$ are orthogonal complements.

\begin{lemma}\label{albeselfduallemma}
Let $(\ste, h, \ep)$ be  an oriented $4$-dimensional Euclidean vector space. Write $\al = \al^{+} + \al^{-}$ for the decomposition of $\al$ into its self-dual and anti-self-dual parts $\al^{\pm} \in \pmdf\std$. For $\al, \be \in \ext^{2}\std$, 
\begin{align}\label{tfsdasd}
&\tf(\star \al \cdot \star \be) = \tf(\al \cdot \be) = \star \tf((\star \al)\cdot \be),&&
\tf(\al^{\pm}\cdot \be^{\pm}) = \al^{\pm}\cdot \be^{\pm} - \tfrac{1}{4}\lb \al^{\pm}, \be^{\pm}\ra h \kwedge h.
\end{align}
In particular, $\tf(\al^{+}\cdot \be^{+}) \in \mcurvweylp(\std)$, $\tf(\al^{-}\cdot \be^{-}) \in \mcurvweylm(\std)$, and $\tf(\al^{+}\cdot \be^{-}) = 0$ so that
\begin{align}\label{mixedcdot}
\al^{+}\cdot \be^{-} = -3(\al^{+}\jmult \be^{-})\kwedge h \in \mcurvric(\std).
\end{align}
\end{lemma}

\begin{proof}
By \eqref{sxalcdotbe} and Lemma \ref{starmcurvweylemma}, for $\sX \in \mcurvweyl(\std)$ and $\al, \be \in \ext^{2}\std$,
\begin{align}
\begin{split}
\lb \tf(\star \al \cdot \star \be), \sX\ra& =\lb \star \al \cdot \star \be, \sX\ra =  -6\lb \op{\sX}(\star \al), \star \be \ra \\&= -6\lb \star \op{\sX}(\al), \star \be \ra = -6\lb \op{\sX}(\al), \be \ra = \lb \al \cdot \be, \sX\ra =\lb \tf(\al \cdot \be), \sX\ra ,\\
\lb \star \tf((\star \al)\cdot \be), \sX \ra & = \lb \tf((\star \al) \cdot \be), \star \sX\ra = \lb (\star\al) \cdot \be, \star \sX\ra = -6\lb \star \al, \op{\star\sX}(\be)\ra \\&= -6\lb \star \al ,\star \op{\sX}(\be)\ra = -6\lb  \al, \op{\sX}(\be)\ra =
\lb \al\cdot \be, \sX\ra = \lb \tf(\al\cdot \be), \sX\ra,
\end{split}
\end{align}
which show \eqref{tfsdasd}. By \eqref{tfsy} and \eqref{rictralcdotbe}, 
\begin{align}\label{tfalbe4d}
\begin{split}
\tf(\al \cdot \be) %& = \al \cdot \be + \rictr(\al \cdot \be)\kwedge h + \tfrac{1}{2}\lb \al, \be \ra h \kwedge h\\
& = \al \cdot \be + 3(\al \jmult \be)\kwedge h + \tfrac{1}{2}\lb \al, \be \ra h \kwedge h.
\end{split}
\end{align}
By \eqref{tfalbe4d} and Lemma \ref{albecontractlemma}, $\al^{\pm} \jmult \be^{\pm} = -\tfrac{1}{4}\lb \al^{\pm}, \be^{\pm}\ra h$, and in \eqref{tfalbe4d} this yields \eqref{tfsdasd}.
By \eqref{tfsdasd}, $\star \tf(\al \cdot \be) = \tf((\star \al)\cdot \be) = \tf(\al^{+}\cdot \be^{+}) - \tf(\al^{-}\cdot \be^{-})$, and it follows that $\tf(\al^{\pm}\cdot \be^{\pm}) \in \mcurvweylpm(\std)$ and $\tf(\al^{+}\cdot \be^{-}) =0$. Substituting the last identity in \eqref{tfalbe4d} yields \eqref{mixedcdot}. 
\end{proof}

\begin{example}\label{ssselfdualexample}
Given an oriented $4$-dimensional Euclidean vector space  $(\ste, h, \ep)$, let $\om \in \ext^{2}\std$ satisfy $\om \circ \om = -h$, so that $\om_{i}\,^{j}$ is a compatible almost complex structure and $(\ste, h, \om)$ is a Kähler structure. Then $\om \in \pmdf\std$ as $\tfrac{1}{2}\om \dwedge \om = \pm \ep$, and, by Lemma \ref{albeselfduallemma}, the idempotent $\sS(\om) = \tfrac{1}{6}(h \kwedge h - \om \cdot \om) = -\tf(\om \cdot \om) \in \mcurvweyl(\std)$ defined in \eqref{ssrdefined} satisfies $\sS(\om) \in \mcurvweyl^{\pm}(\std)$. 
\end{example}

Lemma \ref{cmultreductionlemma} gives another expression for $\sX \opmult \sY$ that is used in the proof of Lemma \ref{lovelocklemma}.
\begin{lemma}\label{cmultreductionlemma}
Let $(\ste, h)$ be a metric vector space.
For $\sX, \sY \in \mcurv(\std)$,
\begin{align}\label{cmult0}
\begin{split}
(\sX\opmult \sY)_{ijkl} & =  -4B(\sX, \sY)_{[ij]kl} - 2B(\sX, \sY)_{k[ij]l} + 6B(\sX, \sY)_{[ijkl]}  \\
& = -\tfrac{1}{2}\left( \sX_{ij}\,^{pq}\sY_{pqkl} + \sY_{ij}\,^{pq}\sX_{pqkl}\right) + \tfrac{3}{2}\sX_{[ij}\,^{pq}\sY_{kl]pq} - 2B(\sX, \sY)_{k[ij]l}.
\end{split}
\end{align}
\end{lemma}
\begin{proof}
The first equality of \eqref{cmult0} follows from \eqref{opmultadefinedb}, \eqref{opmultsdefinedb}, and Theorem \ref{opalgebratheorem}. The second equality of \eqref{cmult0} follows from \eqref{prebelskew}.
\end{proof}
\begin{lemma}\label{lovelocklemma}
Let $(\ste, h)$ be a $4$-dimensional Euclidean vector space. For $\sX, \sY \in \mcurvweyl(\std)$,
\begin{align}
\label{lovelocktracedtwice}\sX_{(i}\,^{abc}\sY_{j)abc} &%=  \tfrac{1}{2}(\sX_{iabc}\sY_{j}\,^{abc}  +  \sY_{iabc}\sX_{j}\,^{abc})
= \tfrac{1}{4}\lb \sX, \sY\ra h_{ij},\\
\label{lovelockpaired}
\begin{split}
2B(\sX, \sY)_{[ij]kl}& = \tfrac{1}{4}(\sX_{ij}\,^{pq}\sY_{pqkl} + \sY_{ij}\,^{pq}\sX_{pqkl})
 %\\& = B(\sX, \sY)_{jlki} - B(\sX, \sY)_{ilkj} + \tfrac{1}{8}\lb \sX, \sY\ra(h\kwedge h)_{ijkl}.
 = 2B(\sX, \sY)_{k[ij]l}  + \tfrac{1}{8}\lb \sX, \sY\ra(h\kwedge h)_{ijkl}.
\end{split}\\
\label{cmult4d}
\begin{split}
(\sX\opmult \sY)_{ijkl} &= -\tfrac{3}{4}\left( \sX_{ij}\,^{pq}\sY_{pqkl} + \sY_{ij}\,^{pq}\sX_{pqkl}\right) + \tfrac{1}{8}\lb \star \sX, \sY \ra \ep_{ijkl} +\tfrac{1}{8}\lb \sX, \sY\ra (h\kwedge h)_{ijkl}.
\end{split}
\end{align}
\end{lemma}
\begin{proof}
By \eqref{lovelockid}, $0 = \sX_{bcl}\,^{p}\sX_{[ij}\,^{[kb}\delta_{p]}\,^{c]}$. Lowering the index $k$ and simplifying this expression yields 
\begin{align}\label{lovelockpaired1}
\sX_{ij}\,^{pq}\sX_{pqkl}  =  - 2\sX_{iplq}\sX_{k}\,^{p}\,_{j}\,^{q} + 2\sX_{jplq}\sX_{k}\,^{p}\,_{i}\,^{q} + \sX_{labc}\sX_{j}\,^{abc}h_{ik} - \sX_{labc}\sX_{i}\,^{abc}h_{jk}.
\end{align} 
Tracing \eqref{lovelockpaired1} in $jl$ and relabeling the result yields $\sX_{iabc}\sX_{j}\,^{abc} = \tfrac{1}{4}|\sX|^{2}h_{ij}$. Polarizing this yields \eqref{lovelocktracedtwice}.
Substituting \eqref{lovelocktracedtwice} into \eqref{lovelockpaired1} and using \eqref{prebelskew} yields
\begin{align}\label{lovelockpaired2}
\begin{split}
4B(\sX, \sX)_{[ij]kl} &= \sX_{ij}\,^{pq}\sX_{pqkl}  =  - 2\sX_{iplq}\sX_{k}\,^{p}\,_{j}\,^{q} + 2\sX_{jplq}\sX_{k}\,^{p}\,_{i}\,^{q} + \tfrac{1}{2}|\sX|^{2} h_{k[i}h_{j]l}\\
& = 4B(\sX, \sX)_{k[ij]l} + \tfrac{1}{4}|\sX|^{2}(h \kwedge h)_{ijkl}.
\end{split}
\end{align} 
Polarizing \eqref{lovelockpaired2} yields \eqref{lovelockpaired}.
Because $\dim \ext^{4}\std = 1$, $\sX_{[ij}\,^{pq}\sY_{kl]pq} = c \ep_{ijkl}$ for some $c \in \rea$. Contracting this equality with $\ep^{ijkl}$ yields $24c = 2\lb \star \sX, \sY \ra$, so that $12\sX_{[ij}\,^{pq}\sY_{kl]pq} = \lb \star \sX, \sY \ra \ep_{ijkl}$. Substituting this and \eqref{lovelockpaired} into \eqref{cmult0} of Lemma \ref{cmultreductionlemma} yields \eqref{cmult4d}.
\end{proof}

\begin{lemma}\label{selfdualcurvmultlemma}
Let $(\ste, h, \ep)$ be a $4$-dimensional oriented Euclidean vector space. There hold
\begin{align}\label{cmultpreisopm}
\begin{split}
\tfrac{1}{3}\op{\sX \opmult \sY}_{\ext^{2}\std}& = \op{\sX}_{\ext^{2}\std}\jmult \op{\sY}_{\ext^{2}\std}   - \tfrac{1}{3}\tr( \op{\sX} _{\ext^{2}\std}\circ \op{\sY}_{\ext^{2}\std}) \oproj_{\pmdf\std} \quad \text{if} \quad \sX, \sY \in \mcurvweyl^{\pm}(\std),\\
\op{\sX \opmult \sY}_{\ext^{2}\std}& = 0 \quad \text{if} \quad \sX \in \mcurvweyl^{+}(\std), \sY \in \mcurvweyl^{-}(\std).
\end{split}
\end{align}
where $\oproj_{\pmdf\std} \in \eno(\ext^{2}\std)$ are the orthogonal projections onto $\pmdf\std$. 
Consequently,
\begin{align}\label{mcurvmultrelations}
& \mcurvweylpm(\std) \opmult \mcurvweylpm(\std) = \mcurvweylpm(\std), &
&\mcurvweylp(\std) \opmult \mcurvweylm(\std) = \{0\}.
\end{align}
\end{lemma}

\begin{proof}
For $\sX, \sY \in \mcurvweyl(\std)$, rewriting \eqref{cmult4d} in terms of $\,\op{\,\,}_{\ext^{2}\std}$ and using Corollary \ref{traceextcorollary} yields 
\begin{align}\label{cmultpreiso}
\begin{split}
\tfrac{1}{3}\op{\sX \opmult \sY}_{\ext^{2}\std}& %= \op{\sX}\jmult \op{\sY} - \tfrac{1}{24}\lb \star \sX, \sY\ra \star - \tfrac{1}{24}\lb \sX, \sY\ra \Id_{\ext^{2}\std}
 = \op{\sX}_{\ext^{2}\std}\jmult \op{\sY}_{\ext^{2}\std}  - \tfrac{1}{6}\tr(\op{\star \sX}_{\ext^{2}\std} \circ \op{ \sY}_{\ext^{2}\std}) \star - \tfrac{1}{6}\tr( \op{\sX}_{\ext^{2}\std} \circ \op{\sY}_{\ext^{2}\std}	) \Id_{\ext^{2}\std}.
\end{split}
\end{align}
Because, by Lemma \ref{starmcurvweylemma}, $\star$ commutes with $\op{\sX}$ and $\op{\sY}$, \eqref{cmultpreiso} implies the equality $\star \sX \opmult \star \sY = \sX \opmult \sY$ which shows $\mcurvweylp(\std) \opmult \mcurvweylm(\std) = \{0\}$. Since $\op{\star \sX}_{\ext^{2}\std} = \op{\sX}_{\ext^{2}\std}\circ \star$, if $\sX = \sX^{+} + \sX^{-}$ with $\sX^{\pm}  \in \mcurvweyl^{\pm}(\std)$ then $\op{\sX}_{\ext^{2}\std}(\al) = \op{\sX^{+}}_{\ext^{2}\std}(\al^{+}) + \op{\sX^{-}}_{\ext^{2}\std}(\al^{-})$ where $\al = \al^{+} + \al^{-}$ is the decomposition of $\al \in \ext^{2}\std$ into its self-dual and anti-self-dual parts. In particular $\op{\sX^{\pm}}_{\ext^{2}\std}$ annihilates $\mpdf\std$ and $\op{\sX^{+}}_{\ext^{2}\std}$ and $\op{\sY^{-}}_{\ext^{2}\std}$ anticommute. In \eqref{cmultpreiso} these observations yield \eqref{cmultpreisopm}, which implies the containments in \eqref{mcurvmultrelations}. Because, by Example \ref{ssselfdualexample}, each of $\mcurvweylp(\std)$ contains a nontrivial idempotent, the $SO(4)$-submodules $\mcurvweylpm(\std) \opmult \mcurvweylpm(\std)$ are nontrivial $SO(4)$-submodules of the irreducible $SO(4)$-modules $\mcurvweylp(\std)$, so equality holds in \eqref{mcurvmultrelations}.
\end{proof}

\begin{remark}
Reversing the orientation of $\ste$ interchanges the subspaces $\mcurvweylpm(\std)$, but the orthogonal decomposition remains; all that changes is the labeling as $+$ or $-$. Hence the relations \eqref{mcurvmultrelations} make sense independently of any choice of orientation, in the sense that $\mcurvweyl(\std)$ decomposes as an orthogonal direct sum of two $5$-dimensional $\opmult$-subalgebras whose product is $\{0\}$.
\end{remark}

\begin{lemma}\label{deunitsimplelemma}
The deunitalization $(\sherm_{0}(3, \rea), \jrd)$ of the $6$-dimensional rank $3$ simple real Euclidean Jordan algebra $\sherm(3, \rea)$ is simple and contains no nontrivial square-zero elements.
\end{lemma}

\begin{proof}
Consider the representation of $\sherm_{0}(3, \rea)$ as trace-free $3 \times 3$ symmetric matrices. 
Let $\dalg \subset \sherm_{0}(3, \rea)$ be the $2$-dimensional subalgebra comprising the diagonal matrices. First it is shown that the subalgebra $(\dalg, \jrd)$ is simple. Let $\ga_{1} = E_{11} - E_{33}, \ga_{2} = E_{22} - E_{33}, \ga_{3} = E_{33} - E_{11} \in \dalg$ where $E_{ij}$ is the matrix with $1$ in the $ij$ component and $0$ in all other components. Then $\{\ga_{i}:1 \leq i \leq 3\}$ are idempotents satisfying $\ga_{i}\circ \ga_{j} = -\ga_{i} - \ga_{j}$. Let $\ideal$ be an ideal of $\dalg$ and let $a = a_{1}\ga_{1} + a_{2}\ga_{2} \in \ideal$. Then $a\jrd \ga_{1} + a = (2a_{1} - a_{2})\ga_{1}$ and $a\jrd \ga_{2} + a = (2a_{2} - a_{1})\ga_{2}$. If $2a_{1} = a_{2}$ and $2a_{2} = a_{1}$, then $4a_{1} = a_{1}$, so $a_{1} = 0$ and $a_{2} = 0$, so $a = 0$. Otherwise, if $2a_{1} \neq a_{2}$, then $\ga_{1} \in \ideal$, in which case $\ga_{2} = \ga_{1} + \ga_{1}\jrd \ga_{2} \in \ideal$, so $\ideal = \dalg$, while, if $a_{1} \neq 2a_{2}$, then $\ga_{2} \in \ideal$, so $\ga_{1} = \ga_{2} + \ga_{1}\jrd \ga_{2} \in \ideal$, so $\ideal = \dalg$. This shows $\dalg$ is simple.

Now let $\ideal$ be a nonzero ideal in $\sherm_{0}(3, \rea, \jrd)$. By the principal axis theorem, every element of $\sherm_{0}(3, \rea, \jrd)$ is equivalent via an automorphism of $(\sherm_{0}(3, \rea), \jrd)$ to an element of $\dalg$, so it can be assumed that $\ideal$ contains a nonzero element. Since $E_{ii} + E_{jj} - 2E_{kk} \in \dalg \subset \ideal$, $E_{ij} + E_{ji} = (E_{ij} + E_{ji})\jrd (E_{ii} + E_{jj} - 2E_{kk}) \in \ideal$ for all $i \neq j \in \{1, 2, 3\}$. Since together $\dalg$ and the elements $E_{ij} + E_{ji}$ with $i \neq j$ span $\sherm_{0}(3, \rea)$, this shows $\ideal = \sherm_{0}(3, \rea)$.

By the principal axis theorem, any square-zero element of $(\sherm_{0}(3, \rea), \jrd)$ is equivalent via an automorphism to an element of $\dalg$. If $a = a_{1}\ga_{1} + a_{2}\ga_{2} \in \dalg$ is square-zero then $a_{1}(a_{1} - 2a_{2}) = 0 = a_{2}(a_{2}- 2a_{1})$ and the unique solution is $a_{1}= 0 = a_{2}$. %This shows $(\sherm_{0}(3, \rea), \jrd)$ contains no nontrivial square-zero element.
\end{proof}

\begin{remark}
Essentially the same argument shows that the deunitalization of a simple real Euclidean Jordan algebra of rank at least $3$ is simple. However, that there are no nontrivial square-zero elements is true if and only if the rank is odd. See \cite{Fox-simplicial} for details.
\end{remark}

\begin{proof}[Proof of Theorem \ref{mcurvweyltheorem}]
Let $(\ste, h)$ be a $4$-dimensional Euclidean vector space.
By Lemma \ref{starmcurvweylemma}, for any $\sX \in \mcurvweyl(\std)$, $\star \op{\sX}_{\ext^{2}\std} = \op{(\star \sX)}_{\ext^{2}\std} = \op{\sX}_{\ext^{2}\std}\star$, so, if $\sX \in \mcurvweyl^{\pm}(\std)$, then $\sX$ preserves $\pmdf\std$ and annihilates $\mpdf\std$, so induces a symmetric endomorphism of $\pmdf\std$. By Example \ref{ssselfdualexample}, if $\om \in \ext^{2}\std$ satisfies $\om \circ \om = -h$, then $\om \in \pmdf\std$ as $\tfrac{1}{2}\om \dwedge \om = \pm \ep$, and $\sS(\om) \in \mcurvweyl^{\pm}(\std)$. By \eqref{albega}, $\op{\om \cdot \om}_{\ext^{2}\std}(\om) = -5\om$, so $6\op{\sS(\om)}_{\ext^{2}\std}(\om) = \op{h\kwedge h}_{\ext^{2}\std}(\om)  - \op{\om \cdot \om}_{\ext^{2}\std}(\om)  = 4\om$. Since this shows that $\op{\sS(\om)}_{\ext^{2}\std}$ acts nontrivially on $\pmdf\std$, it follows from the Schur lemma that the $SO(4)$-equivariant map $\Psi:\mcurvweyl^{\pm}(\std) \to\sherm_{0}(\pmdf \std, h)$ sending $\sX \in \mcurvweyl^{\pm}(\std)$ to $3\op{\sX}_{\pmdf\std}$ is a linear isomorphism. 
It follows from \eqref{cmultpreisopm} of Lemma \ref{selfdualcurvmultlemma} that $\Psi: (\mcurvweyl^{\pm}(\std), \opmult) \to (\sherm_{0}(\pmdf \std, h), \jrd)$ is an algebra isomorphism. By the definition of $G$ and Corollary \ref{traceextcorollary}, for $\sX, \sY \in \mcurvweyl^{\pm}(\std)$, $\Psi^{\ast}(G)(\sX, \sY) = 9 G(\op{\sX}, \op{\sY}) = 3\tr(\op{\sX} \circ \op{\sY}) = \tfrac{3}{4}\lb \sX, \sY\ra$. That the trace-form $\tau_{\opmult}$ is the stated multiple of $h$ follows from the corresponding statement in the algebra $(\sherm_{0}(\pmdf \std, h), \jrd, \tfrac{4}{3}G)$ and the fact that $\Psi$ is an isometric isomorphism. That $(\mcurvweyl^{\pm}(\std), \opmult, \lb \dum, \dum \ra)$ is simple and contains no square-zero elements follows from the preceding in conjunction with Lemma \ref{deunitsimplelemma}. 
\end{proof}

\section{Idempotents in the subalgebra of Weyl curvature tensors}\label{idempotentsection}
In this section there are constructed some idempotents in $(\mcurvweyl(\std), \opmult)$ and some of their products are calculated. This provides more detailed information about the internal structure of $(\mcurvweyl(\std), \opmult)$ and yields an alternative proof of Theorem \ref{mcurvweyltheorem} along lines viable when $\dim \ste > 4$.

Let $(\ste, h)$ be a Euclidean vector space of dimension $n > 2$. A subspace $\stw \subset \ste$ determines $g \in \idem(S^{2}\std, \jmult)$ such that  $\stw$ equals the image of the endomorphism $g_{i}\,^{j}$, in which case $\tr g = \dim \stw$. The space of orthogonal almost complex structures on $\stw$ is identified with 
\begin{align}
\oc(\stw, h) = \{\al \in \ext^{2}\std: \al \circ \al = -g, \al \circ g = \al = g \circ \al \}
\end{align}
By construction, $\al \in \oc(\stw, h)$ satisfies $\dim \stw = \tr g = |g|^{2} = |\al|^{2}$. It will be said that $\al \in \oc(\stw, h)$ determines an orthogonal almost complex structure on $\stw$.

For even $r$ satisfying $2 \leq r \leq n$, there is $r$-dimensional $\stw \subset \ste$ such that $\oc(\stw, h)$ is nonempty. Let $\{\ep(1), \dots, \ep(n)\}$ be an $h$-orthonormal basis of $\std$ such that $\{\ep(1), \dots, \ep(r)\}$ spans $\stwd$ and $\{\ep(r+1), \dots, \ep(n)\}$ spans the $h$-orthogonal complement $\stw^{\ast\,\perp}$. The endomorphism associated with $g = \sum_{i = 1}^{r}\ep(i)\tensor \ep(i) \in S^{2}\std$ is the orthogonal projection on $\stw$ and $\al = \sum_{i = 1}^{r/2}\ep(2i-1)\dwedge \ep(2i) \in \oc(\stw, h)$.

\begin{lemma}\label{weylidempotentslemma}
Let $(\ste, h)$ be a Euclidean vector space of dimension $n > 2$. Let $r$ be even and satisfy $2 \leq r \leq n$, let $\stw \subset \ste$ have dimension $r$, let $\al \in \oc(\stw, h)$, and let $g = -\al \circ \al$. The elements of $\mcurv(\std)$ defined by
\begin{align}\label{ssrdefined}
\begin{aligned}
&\sH(g)   = \tfrac{1}{1-r}g \kwedge g, \qquad
\sK^{r}(\al)  =  -\tfrac{1}{r + 2}\left(\al \cdot \al + g\kwedge g \right),\\
&\sS^{r}(\al)   = \tfrac{1}{r+2}\left(\tfrac{3}{r-1}g\kwedge g - \al \cdot \al \right) = \sK^{r}(\al) - \sH(g),
\end{aligned}
\end{align}
are idempotents in $(\mcurv(\std), \opmult)$ that are nontrivial and linearly independent when $r \geq 4$, while, when $r = 2$, $\sS^{2}(\al)$ is trivial and $\sK^{2}(\al) = \sH(g)$.
They satisfy the relations
\begin{align}\label{ssrrelations}
&\sK^{r}(\al) \opmult \sH(g) = \sH(g), && \sK^{r}(\al) \opmult \sS^{r}(\al) = \sS^{r}(\al), && \sS^{r}(\al)\opmult \sH(g) = 0,
\end{align} 
\begin{align}
\label{ssrinorm}
\begin{aligned}
&\lb \sK^{r}(\al), \sS^{r}(\al)\ra = |\sS^{r}(\al)|^{2}_{h}  = \tfrac{6r(r-2)}{(r+2)(r-1)} ,&&
|\sK^{r}(\al)|^{2}_{h} =  \tfrac{8r}{r+2},\\
&\lb \sK^{r}(\al), \sH(g)\ra =|\sH(g)|^{2}_{h} =  \tfrac{2r}{r-1},&&
 \lb\sS^{r}(\al), \sH(g) \ra = 0.
\end{aligned}
\end{align}
Moreover:
\begin{enumerate}
\item $\sS^{r}(\al) \in \mcurvweyl(\std)$, while $\sK^{r}(\al)\notin\mcurvweyl(\std)$ because $\rictr(\sK^{r}(\al)) = g$. 
\item If $r \in \{n-1, n\}$, $\sS^{r}(\al) = -\tfrac{1}{r+2}\tf(\al \cdot \al)$.
\end{enumerate}
\end{lemma}

\begin{proof}
By \eqref{alalsquared}, \eqref{cdw}, and Lemma \ref{projectionkwedgelemma},
\begin{align}\label{ssr1}
\begin{aligned}
(\al \cdot \al)& \opmult (\al \cdot \al)  = -(r+2)\al \cdot \al +3g \kwedge g,\qquad
(\al \cdot \al) \opmult (g \kwedge g)  = -3g \kwedge g,\\
(g \kwedge g)&\opmult(g \kwedge g) = (1-r)g \kwedge g.
\end{aligned}
\end{align}
Combining the identities \eqref{ssr1} yields
\begin{align}\label{ssrcalc}
\begin{split}
(A\al \cdot \al &+ Bg \kwedge g) \opmult (A\al \cdot \al + Bg \kwedge g)- (A\al \cdot \al + Bg \kwedge g)\\
& = -A((r+2)A + 1)\al \cdot \al + ((1-r)B^{2} - (6A+1)B + 3A^{2})g \kwedge g.
\end{split}
\end{align}
That \eqref{ssrcalc} vanish yields the equations $A^{2}(r+2) = -A$ and $(r-1)B^{2} + (6A + 1)B - 3A^{2} = 0$. These equations have the three nontrivial solutions $(0, \tfrac{1}{1-r})$, $(-\tfrac{1}{r+2}, -\tfrac{1}{r+2})$ and $(-\tfrac{1}{r+2}, \tfrac{3}{(r-1)(r+2)})$ for $(A, B)$ that yield, respectively, the idempotents $\sH(g)$, $\sK^{r}(\om)$, and $\sS^{r}(\om)$. The relations \eqref{ssrrelations} follow from \eqref{ssr1} by computations similar to those showing \eqref{ssrcalc}.

By \eqref{rictralcdotbe}, $\rictr(\al \cdot \al) = 3\al \circ \al = -3g$, and by \eqref{rictralkwedgebe}, $\rictr(g \kwedge g) = g \circ g - \tr(g)g = -(r-1)g$, from which there follow $\rictr(\sS^{r}(\al)) = 0$ and $\rictr(\sK^{r}(\al)) = g$, so that $\sS^{r}(\al) \in \mcurvweyl(\std)$ but $\sK^{r}(\al) \notin \mcurvweyl(\std)$.

From \eqref{alcdotbegacdotsinorm}, \eqref{alcdotbegawedgesinorm}, and \eqref{alwedgebegawedgesinorm} there follow $|\al \cdot \al|^{2}_{h} =6r(r+1)$, $\lb \al \cdot \al, g \kwedge g \ra = 6r$, and $|g \kwedge g|^{2} = 2r(r-1)$, which yield \eqref{ssrinorm}. 
By \eqref{ssrinorm}, $\sH(g)$, $\sK^{r}(\al)$, and $\sS^{r}(\al)$ are nontrivial and linearly independent when $r \geq 4$, while, when $r = 2$, $\sS^{2}(\al)$ is trivial and $\sK^{2}(\al) = \sH(g)$.

A straightforward computation using \eqref{tfalcdotbe} shows
\begin{align}\label{ssr6}
\begin{split}
-\tf(\al \cdot \al)  &= -\al \cdot \al + \tfrac{6}{n-2}g \kwedge h - \tfrac{3r}{(n-1)(n-2)}h \kwedge h
 =  (r+2)\sS^{r}(\al) +  \tfrac{3(n-1-r)}{n-2}\sH(g), 
\end{split}
\end{align}
where $\sH(g)$ is as in \eqref{sbdefined}. If $r = n$, then $h = g$ and $\sH(g) = 0$, so if $r \in \{n-1, n\}$, by \eqref{ssr6}, $(r+2)\sS^{r}(\al) = -\tf(\al \cdot \al)$.
\end{proof}

\begin{remark}
From Lemma \ref{mcsubspacelemma} it follows that, if $\stwd$ is a subspace of $\std$, the inclusion of an idempotent of $\mcurvweyl(\stwd)$ is an idempotent in $\mcurvweyl(\std)$. 
In particular, the $r < m$ cases of Lemma \ref{weylidempotentslemma} follow from the $r = m$ case of Lemma \ref{weylidempotentslemma} in conjunction with Lemma \ref{mcsubspacelemma}. 
\end{remark}

Lemma \ref{idempotentparameterizationlemma} shows that when $\dim \std = 2n \geq 4$, certain of the idempotents produced by Lemma \ref{weylidempotentslemma} constitute an orbit of $O(2n)$ acting in $\mcurvweyl(\std)$ that can be identified with the space of orthogonal complex structures on $\ste$ inducing a given orientation on $\ste$. The proof uses Lemma \ref{ssrspectrumlemma}.

\begin{lemma}\label{ssrspectrumlemma}
Let $(\ste, h)$ be a Euclidean vector space of dimension $n$. Let $r$ be even and satisfy $4 \leq r \leq n$. Let $\stw \subset \ste$ have dimension $r$, let $\al \in \oc(\stw, h)$, and let $g = -\al \circ \al$. 
\begin{enumerate}
\item\label{ssreigenvalues} The eigenvalues of $\op{\sS^{r}(\al)}_{\ext^{2}\std}$ are $\tfrac{r-2}{r-1}$, with one-dimensional eigenspace spanned by $\al$; $\tfrac{1}{1-r}$, with $\tfrac{r(r-2)}{4}$-dimensional eigenspace contained in $\ext^{2}\stwd$; $\tfrac{r-4}{(r+2)(r-1)}$, with $\tfrac{r^{2} - 4}{4}$-dimensional eigenspace contained in $\ext^{2}\stwd$; and $0$, with eigenspace equal to $\ext^{2}\stw^{\perp\,\ast} \oplus \stw^{\perp\,\ast}\dwedge \stwd$.
\item\label{skreigenvalues} The nonzero eigenvalues of $\op{\sK^{r}(\al)}_{\ext^{2}\std}$ are $1$, with one-dimensional eigenspace spanned by $\al$; and $-\tfrac{2}{r+2}$, with $\tfrac{r^{2} - 4}{4}$-dimensional eigenspace contained in $\ext^{2}\stwd$.
\end{enumerate}
\end{lemma}

\begin{proof}
Write $\A = \qp_{\jmult}(\al) \in \eno(\ext^{2}\std)$.  By \eqref{qpid1}, $\A^{2} = \qp_{\jmult}(\al)\jmult \qp_{\jmult}(\al) = \qp_{\jmult}(g)$ and $\A^{3} = \qp_{\jmult}(\al)\jmult \qp_{\jmult}(g) = \qp_{\jmult}(\al) = \A$. By \eqref{albega} and \eqref{gasiopal},
\begin{align}\label{ssrop}
\begin{aligned}
\op{\sS^{r}(\al)}_{\ext^{2}\std} &= \tfrac{1}{r+2}\left(\al \tensor \al^{\sharp} - \A - \tfrac{3}{r-1}\A^{2}\right).
\end{aligned}
\end{align}
Since $\A(\al) = -\al$, by \eqref{ssrop}, $\op{\sS^{r}(\al)}_{\ext^{2}\std}(\al) = \tfrac{r-2}{r-1}\al$. Because $\op{\sS^{r}(\al)}_{\ext^{2}\std}$ is self-adjoint, it preserves the orthogonal complement $\lb \al \ra^{\perp}\subset \ext^{2}\std$. Because $\lb \A(\ga), \al\ra = -\lb \al, \ga\ra$, $\A$ preserves $\lb \al \ra^{\perp}$ as well. A straightforward computation using $\A^{3} =\A$ shows that the minimal polynomial of the restriction $\op{\sS^{r}(\al)}_{\lb \al \ra^{\perp}}$ is $x(x + \tfrac{1}{r-1})(x - \tfrac{r-4}{(r+2)(r-1)})$. It follows that $\tfrac{r-2}{r-1}$ has multiplicity one as an eigenvalue of $\op{\sS^{r}(\al)}_{\ext^{2}\std}$.
It is convenient to identify $\stwd$ and $\stw^{\perp\,\ast}$ with orthogonal subspaces of $\std$.
If $\nu \in \stw^{\perp\,\ast}$, then $\nu^{p}g _{pi} =0$, so $\nu^{p}\al_{pi} = \nu^{p}g _{p}\,^{q}\al_{q}\,^{i} = 0$, and it follows that, for all $\mu \in \std$, $\A(\mu \dwedge \nu) = 0$, so $\ext^{2}\stw^{\perp\,\ast} \oplus (\stw^{\perp\,\ast}\dwedge \stwd) \subset \ker \op{\sS^{r}(\al)}_{\ext^{2}\std}$. 
Because it commutes with $g _{i}\,^{j}$, the endomorphism $\al_{i}\,^{j}$ preserves $\stw$ and its restriction to $\stw$ is an almost complex structure compatible with the restriction of $h$ to $\stw$. 
Consequently, both $\A$ and $\op{\sS^{r}(\al)}_{\ext^{2}\std}$ preserve $\ext^{2}\stwd$ and their eigenvalues on $\ext^{2}\stwd$ are nonzero. In addition to forcing the equality $\ext^{2}\stw^{\perp\,\ast} \oplus \stw^{\perp\,\ast}\dwedge \stwd = \ker \op{\sS^{r}(\al)}_{\ext^{2}\std}$, this observation implies that the nonzero eigenspaces of $\op{\sS^{r}(\al)}_{\ext^{2}\std}$ on $\ext^{2}\stwd \cap \lb \al \ra^{\perp}$ are the $\pm 1$-eigenspaces of $\A$ on $\ext^{2}\stwd \cap \lb \al \ra^{\perp}$. The dimensions of these eigenspaces can be computed using the observation that these subspaces comprise the real parts of forms of type $(2,0)$ and $(1,1)$ with respect to the almost complex structure given by $\al_{i}\,^{j}$ (see \cite{Fox-curvtensorkahler} for details). 
This proves \eqref{ssreigenvalues}. Claim \eqref{skreigenvalues} follows from the preceding and $\op{\sK^{r}(\al)}_{\ext^{2}\std} = \op{\sS^{r}(\al)}_{\ext^{2}\std} + \op{\sH(g)}_{\ext^{2}\std}$. Further details are omitted.
\end{proof}

For a Euclidean vector space $(\ste, h)$ of even dimension $n = 2m$, the space $\oc(\ste, h) = \{\om_{ij} \in \ext^{2}\std: \om_{i}\,^{p}\om_{p}\,^{j} = -\delta_{i}\,^{j}\}$ is identified with the homogeneous space $O(2m)/U(m)$. The action of $O(2m)$ on $\ext^{2}\std$ preserves $\oc(\ste, h)$. Given $\om \in \oc(\ste, h)$ with associated almost complex structure $J_{i}\,^{j} = \om_{i}\,^{j}$ there exists an orthonormal basis of $\ste$ of the form $\{e_{1}, \dots, e_{n}, J(e_{1}), \dots, J(e_{n})\}$ and this suffices to show that $O(2m)$ acts transitively on $\oc(\ste, h)$. The stabilizer of $\om \in \oc(\ste, h)$ is $U(m) = O(2m) \cap Sp(n, \rea)$, where $Sp(n, \rea)$ is the symplectic group fixing $\om$ and $U(m)$ the unitary group preserving $(h, J)$. Thus $\oc(\ste, h)$ is identified with $O(2m)/U(m)$ for any $\om \in \oc(\ste, h)$. 

Since $O(2m)$ has two connected components, comprising orthogonal transformations preserving opposite orientations of $\ste$, and $U(m)$ is connected, the space $\oc(\ste, h)$ has two connected components $\oc^{\pm}(\ste, h)$, each identified with $SO(2m)/U(m)$, and the complex structures of $\oc^{+}(\ste, h)$ induce the orientation opposite that induced by the complex structures of $\oc^{-}(\ste, h)$. If $\om\in \oc(\ste, h)$ is fixed and $F \in O(2m)$ is the orthogonal reflection through the $+1$ eigenspace of the endomorphism $\om_{i}\,^{j}$, then the action of $F$ interchanges $\oc^{\pm}(\ste, h)$. Let $\hat{U}(m) \subset O(2m)$ be the subgroup generated by $U(m)$ and $F$. Then $O(2m)/\hat{U}(m)$ is identified with the quotient space $\oc(\ste, h)/\sim$ where $\om \sim - \om = F\cdot \om$. Since $U(m)$ and $FU(m)$ lie in different connected components of $O(2m)$, this quotient space is identified with $SO(2m)/U(m)$. 

\begin{lemma}\label{idempotentparameterizationlemma}
Let $(\ste, h)$ be a Euclidean vector space of even dimension $n = 2m$. The map $\sS:\oc(\ste, h) \to \idem(\mcurvweyl(\std), \opmult) \subset \mcurvweyl(\std)$ defined by 
\begin{align}
\sS(\om) = -\tfrac{1}{n+2}\tf(\om \cdot \om) = \tfrac{1}{n+2}\left(\tfrac{3}{n-1}h \kwedge h - \om \cdot \om\right),
\end{align}
is an $O(2m)$-equivariant double cover of its image $\sS(\oc(\ste, h))) = \{\sS(\om): \om \in\oc(\ste, h)\}$, injective on either connected component $\oc^{\pm}(\ste, h) \simeq SO(2m)/U(m)$, with image equal to $O(2m)/\hat{U}(m) \simeq SO(2m)/U(m)$, realized as the $O(2m)$ orbit of $\sS(\om)$ for any $\om \in \oc(\ste, h)$, and spanning $\mcurvweyl(\std)$.
\end{lemma}
\begin{proof}
Because the map $\tf$ and the product $\cdot$ are $O(2m)$-equivariant, so is the map $\sS$. By Lemma \ref{weylidempotentslemma}, for $g \in O(2m)$, $\sS(g\cdot \om)= g \cdot \sS(\om)$ is an idempotent in $(\mcurvweyl(\std), \opmult)$, and so $\sS$ is a map from the homogeneous space $O(2m)/U(m) \simeq \oc(\ste, h)$ to the $O(2m)$ orbit of $\sS(\om)$ in $\mcurvweyl(\std)$ whose image comprises idempotents. By definition, $\sS(-\om) = \sS(\om)$. 
If $\sS(g \cdot \om) = \sS(\om)$ for $g \in O(2n)$, then, by Lemmas \ref{curvoplemma} and \ref{ssrspectrumlemma} and the $O(2n)$-equivariance of $\sS$,
\begin{align}\label{ssinjective}
\op{\sS(\om)}_{\ext^{2}\std}(g\cdot \om) &= \op{\sS(g\cdot \om)}_{\ext^{2}\std}(g\cdot \om)=\op{g\cdot\sS(\om)}_{\ext^{2}\std}(g\cdot \om) =  g\cdot \op{\sS(\om)}_{\ext^{2}\std}(\om) = \tfrac{n-2}{n-1} g\cdot \om.
\end{align}
By Lemma \ref{ssrspectrumlemma}, $\om$ spans the $\tfrac{n-2}{n-1}$-eigenspace of $\op{\sS(\om)}_{\ext^{2}\std}$, so \eqref{ssinjective} shows $g\cdot \om$ is a multiple of $\om$. Since $(g\cdot \om)\circ (g\cdot \om) = -h$, this forces $g\cdot \om = \pm \om$, and hence $g \in \hat{U}(m)$. It follows that $\sS$ is two-to-one, with image equal to the $O(2m)$ orbit of $\sS(\om)$ for any $\om \in \oc(\ste, h)$, and that this orbit is identified with $O(2m)/\hat{U}(m)$ in such a way that $\sS$ maps either connected component $\oc^{\pm}(\om, h)$ onto it bijectively. 
By the $O(2m)$-irreducibility of $\mcurvweyl(\std)$, because the $O(2m)$-invariant subspace $\spn\{\sS(\om): \om \in\oc(\ste, h)\} \subset\mcurvweyl(\std)$ is nonempty, it equals $\mcurvweyl(\std)$.
\end{proof}

\begin{lemma}\label{jendobwlemma}
For $m \times m$ anti-Hermitian complex matrices $A$ and $B$ such that $A^{2} = -I = B^{2}$, 
\begin{align}\label{trcomrelation}
 -2m \leq 2m - 4 \tr \overline{A\jmult B}^{t}A\jmult B= -2\tr(ABAB) = -2m + \tr \overline{[A, B]}^{t}[A, B] \leq  2m, 
\end{align}
with equality in the upper bound if and only if $A$ and $B$ anticommute and equality in the lower bound if and only if $A$ and $B$ commute. In the latter case there is $q \in \integer$ such that $0 \leq q \leq m$ such that $\tr AB = 2q- m$ with $\tr AB = \pm m$ if and only if $B = \pm A$; moreover, if $A$ and $B$ are real matrices then $q$ must be even.
\end{lemma}

\begin{proof}
The equalities in \eqref{trcomrelation} are always valid.
If $A\jmult B = 0$, then $\tr \overline{[A, B]}^{t}[A, B] = -\tr [A, B]^{2} = -4\tr(ABAB) = 4\tr(A^{2}B^{2}) = 4m$, so equality holds in the upper bound in \eqref{trcomrelation}. It is immediate that equality holds in the lower bound in \eqref{trcomrelation} if and only if $A$ and $B$ commute. In this case $iA$ and $iB$ are commuting Hermitian matrices so are simultaneously unitarily diagonalizable. If $m-q$ is the dimension of their joint $1$ eigenspace, then $-\tr AB \tr (\j A)(\j B)= (m-q) - q = m - 2q$, and $\tr AB = \pm m$ if and only if $q \in \{0, m\}$, in which case $B = \pm A$. If $A$ and $B$ are moreover real then their eigenvalues $\pm \j$ have the same multiplicities, so $q$ must be even.
Now suppose $A$ and $B$ are anti-Hermitian and $A^{2} = -I = B^{2}$. There is an $m \times m$ unitary matrix $U$ such that $C = UA\bar{U}^{t}$ is diagonal with diagonal entries $c_{1}, \dots, c_{m} \in \{\pm \j\}$. Define $D = UB\bar{U}^{t}$. The components of $[C, D]$ with respect to a basis satisfy $[C, D]_{ij} = (c_{i} - c_{j})D_{ij}$ and $\overline{c_{i} - c_{j}} = -(c_{i} - c_{j})$, so
\begin{align}\label{trcom}
\begin{aligned}
0 & \leq \tr \overline{[A, B]}^{t}[A, B] = \tr \overline{[C, D]}^{t}[C, D] = -\sum_{ i \neq j} (c_{i} - c_{j})^{2}|D_{ij}|^{2} &\\
&\leq 2\sum_{i \neq j}(|c_{i}|^{2} + |c_{j}|^{2})|D_{ij}|^{2} \leq 4\sum_{i \neq j}|D_{ij}|^{2} \leq -4\tr D^{2} = -4\tr B^{2} = 4m.
\end{aligned}
\end{align}
By \eqref{trcom}, $\tr \overline{[A, B]}^{t}[A, B] = 4m$ if and only if $(c_{i} + c_{j})|D_{ij}|^{2}= 0$ for all $1\leq i \neq j \leq m$ and $D_{ii} = 0$ for all $1 \leq i \leq m$. This means $c_{i} + c_{j} = 0$ or $D_{ij} = 0$ for all $1\leq i \neq j \leq m$, so $\tr \overline{[A, B]}^{t}[A, B] = 4m$ if and only if $(c_{i} + c_{j})D_{ij} = 0$ for all $1 \leq i,j \leq m$. Equivalently, $\tr \overline{[A, B]}^{t}[A, B] = 4m$ if and only if $CD + DC =0$, or, what is the same, if and only if $A$ and $B$ anticommute. 
\end{proof}

\begin{lemma}\label{ssranglelemma}
Let $(\ste, h)$ be a Euclidean vector space of dimension $n$. Let $\stw \subset \ste$ have dimension $r \in 2\integer$ satisfying $4 \leq r \leq n$, let $\al, \be\in \oc(\stw, h)$, and let $g = -\al \circ \al = -\be\circ \be$. There holds
\begin{align}\label{ssangle}
\lb \sS^{r}(\al), \sS^{r}(\be)\ra = \tfrac{6}{(r+2)^{2}}\left(\lb \al, \be\ra^{2} + \tr (\al\circ \be \circ \al\circ \be) - \tfrac{3r}{r-1}\right) \geq - \tfrac{6r}{(r+2)(r-1)},
\end{align}
so that
\begin{align}\label{ssangle2}
\tfrac{\lb \sS^{r}(\al), \sS^{r}(\be)\ra}{|\sS^{r}(\al)||\sS^{r}(\be)|} 
= \tfrac{r-1}{r(r-2)(r+2)}\left(\lb \al, \be\ra^{2} + \tr(\al\circ \be \circ \al\circ \be)  - r(r+1)\right) + 1 \geq -\tfrac{1}{r-2}.
\end{align}
\begin{enumerate}
\item There holds equality in the lower bounds in \eqref{ssangle} and \eqref{ssangle2} if and only if $\al\circ \be + \be \circ \al= 0$.
\item\label{commutingcomplexangle} If $[\al, \be] = 0$, then there is $k \in \{4p - r: 0 \leq p \leq r/2\}$ such that $\lb \al, \be\ra = -k$ and $\lb \sS^{r}(\al), \sS^{r}(\be)\ra =  \tfrac{6r(r-4)}{(r-1)(r+2)^{2}} + \tfrac{6k^{2}}{(r+2)^{2}}$. Moreover, $k = \pm r$ if and only if $\be = \pm \al$. In particular, $\lb \sS^{4}(\al), \sS^{4}(\be)\ra =0$ if and only if $r = 4$ and $\be \neq \pm \al$.
\end{enumerate}
\end{lemma}
\begin{proof}
Let $\be \in \oc(\ste, h)$ and note that this means $\be\circ \be = -g$. By \eqref{alcdotbegacdotsinorm}, $|\al \cdot \al|^{2}_{h} =6r(r+1)$ and $\lb \al \cdot \al, \be \cdot \be\ra = 6\lb \al, \be\ra^{2} + 6\tr (\al \circ \be \circ \al \circ \be)$. By \eqref{alwedgebegawedgesinorm}, $|g  \kwedge g |^{2}_{h} = 2r(r-1)$ and
\begin{align}
\lb \be \cdot \be, g  \kwedge g  \ra = -6\tr \be\circ g \circ \be \circ g  = -6 \tr \be \circ g \circ \be \circ g  = -6\tr \be \circ \be = 6\tr g  = 6r,
\end{align}
and, similarly, $\lb\al \cdot \al, g  \kwedge g \ra = 6r$. There result the equality in \eqref{ssangle} and \eqref{ssangle2}. Equality holds in the lower bounds of \eqref{ssangle} and \eqref{ssangle2} when $\al \circ \be = -\be \circ \al$ because in this case $\lb \al, \be\ra = 0$, and $\tr \al \circ \be \circ \al \circ \be = -r$. 
By Lemma \ref{jendobwlemma} applied to the matrices of $\al_{i}\,^{j}$ and $\be_{i}\,^{j}$ in some basis, $2r - 2\tr(\al \circ \be \circ \al \circ \be) = -\tr([\al, \be]\circ [\al, \be])  \leq 4r$, with equality if and only if $\al \jmult \be = 0$, so that $\tr(\al \circ \be \circ \al \circ \be)  \geq -r$, with equality if and only if $\al \jmult \be = 0$. There follow the lower bounds in \eqref{ssangle} and \eqref{ssangle2}, with equality if and only if $\al \jmult \be = 0$. 
If $[\al, \be] = 0$, then $\al \circ \be$ is self-adjoint and $\al \circ \be\circ \al \circ \be = g$, so $-\lb \al, \be\ra = \tr(\al \circ \be)$ is an integer $k$ such that $-r \leq k \leq r$. By Lemma \ref{jendobwlemma}, $k = 4p - r$ for some $0 \leq p \leq r/2$. 
In \eqref{ssangle} this yields \eqref{commutingcomplexangle}.
\end{proof}

A \emph{hypercomplex structure} on a real vector space $\ste$ is a pair of anticommuting almost complex structures $I, J \in \eno(\ste)$ such that $K = I\circ J$ is an almost complex structure.
Because it is a module over the quaternions, a hypercomplex vector space has dimension divisible by $4$. 
A \emph{hyper-Kähler structure} on a Euclidean vector space $(\ste, h)$ equipped with a hypercomplex structure $\{I, J, K\}$ and a metric $h$ such that each of $I$, $J$, and $K$ is compatible with $h$. By definition this means that $\al_{ij} = I_{i}\,^{p}h_{pj}$, $\be_{ij} = J_{i}\,^{p}h_{pj}$, and $\ga_{ij} = K_{i}\,^{p}h_{pj}$ are symplectic forms. Given a subspace $\stw \subset \ste$, it will be said that an ordered pair $(\al, \be)\in \oc(\stw, h)^{2}$ determines a hyper-Kähler structure on $\stw$ if $\al$ and $\be$ anticommute, meaning $\al \jmult \be = 0$. In this case $\ga = \al \circ \be \in \oc(\stw, h)$, $\stw$ is the image of the endomorphism $g_{i}\,^{j}$ where $-g = \al \circ \al = \be \circ \be = \ga \circ \ga$, $\al \circ g= \al = g \circ g$, $\be \circ g = \be = g \circ \be$, and $\ga \circ g = \ga = g \circ \ga$, and $\tr g = |g|^{2}= |\al|^{2} = |\be|^{2} = \dim \stw$.

\begin{lemma}\label{hyperkahlerlemma}
Let $(\ste, h)$ be a Euclidean vector space. Let $\stw \subset \ste$ be a subspace of dimension $r$ divisible by $4$, suppose $(\al, \be) \in \oc(\stw, h)^{2}$ determines a hyper-Kähler structure on $\stw$, and let $\sS^{r}(\al), \sS^{r}(\be), \sS^{r}(\ga) \in \mcurvweyl(\std)$ be the idempotents defined as in \eqref{ssrdefined} (where $\ga = \al \circ \be$).
\begin{enumerate}
\item There hold the relations
\begin{align}\label{ssrproduct}
\begin{aligned}
&\sS^{r}(\al) \opmult \sS^{r}(\be) = -\tfrac{1}{r+2}\left(\sS^{r}(\al) +\sS^{r}(\be) -5 \sS^{r}(\ga)\right),&\\
&\sS^{r}(\al)\opmult(\al \cdot \be)  = \tfrac{1}{2}\al \cdot \be,& 
&\sS^{r}(\al)\opmult(\be \cdot \ga)  = -\tfrac{6}{r+2}\be \cdot \ga,&
\end{aligned}
\end{align}
\begin{align}\label{ssrproduct2}
\begin{aligned}
&(\al \cdot \be) \opmult (\al \cdot \be)=(r+2)\left(\tfrac{(r+4)}{4}(\sS^{r}(\al) + \sS^{r}(\be)) - \tfrac{5}{2}\sS^{r}(\ga)\right),&\\
&(\al \cdot \be) \opmult (\al \cdot \ga) = -\tfrac{r+14}{4}\be \cdot \ga.&
\end{aligned}
\end{align}
and those obtained from them by permuting $\al$, $\be$, and $\ga$.
\item \label{ssalbeangle} $\lb \sS^{r}(\al), \sS^{r}(\be)\ra = \lb \sS^{r}(\be), \sS^{r}(\ga)\ra = \lb \sS^{r}(\ga), \sS^{r}(\al)\ra =-\tfrac{6r}{(r+2)(r-1)}$,
so the cosine of the angle between any two of $\sS^{r}(\al)$, $\sS^{r}(\be)$, and $\sS^{r}(\ga)$ is $1/(2-r)$.
\item\label{albegaproducts} The elements $\al\cdot \be$, $\be \cdot \ga$, and $\ga\cdot \al$ are pairwise orthogonal of norm $\sqrt{3r(r+2)}$ and each is orthogonal to $\spn\{\sS^{r}(\al), \sS^{r}(\be), \sS^{r}(\ga)\}$. 
\item\label{hcsubalgebra} Let $\balg = \spn\{\sS^{4}(\al),\sS^{4}(\be), \sS^{4}(\ga), \al\cdot \be, \be \cdot \ga, \ga \cdot \al\}$.
\begin{enumerate}
\item\label{ssalbegasum} If $r = 4$, $\sS^{4}(\al) + \sS^{4}(\be) + \sS^{4}(\ga) = 0$, and $\balg \subset (\mcurvweyl(\std), \opmult)$ is a $5$-dimensional subalgebra.
\item If $r \geq 8$, $\balg$ is a $6$-dimensional subalgebra of $(\mcurvweyl(\std), \opmult)$.
\end{enumerate}
\item\label{opalbepre} For $\sX \in \spn\{\sS^{r}(\al), \sS^{r}(\be), \sS^{r}(\ga), \be \cdot \ga, \ga \cdot \al, \al \cdot \be\}$, $\op{\sX}_{\ext^{2}\std}$ preserves $\spn \{\al, \be, \ga \} \subset \ext^{2}\std$. \item\label{opalbe} For $\sX = -x_{1}\sS^{r}(\al) - x_{2}\sS^{r}(\be) - x_{3}\sS^{r}(\ga) - \tfrac{2}{r+2}\left(w_{1}\be \cdot \ga + w_{2}\ga \cdot \al + w_{3}\al \cdot \be \right)$,
the matrix of $\op{\sX}_{\spn\{\al, \be, \ga\}}$ with respect to the equal-norm orthogonal basis $\{\al, \be, \ga\}$ is
\begin{align}\label{hkmatrix}
\begin{pmatrix}
\tfrac{(2-r)x_{1} + x_{2} + x_{3}}{r-1} & w_{3} & w_{2} \\
w_{3} &\tfrac{x_{1}+ (2-r)x_{2} + x_{3}}{r-1}   & w_{1}\\
w_{2} & w_{1} &\tfrac{x_{1} + x_{2}  + (2-r)x_{3}}{r-1} 
\end{pmatrix}.
% = -\tfrac{(r-4)}{3(r-1)}\ell(x)I + \begin{pmatrix}
%\tfrac{-2x_{1} + x_{2} + x_{3}}{3} & w_{3} & w_{2} \\
%w_{3} &\tfrac{x_{1} -2 x_{2} + x_{3}}{3}   & w_{1}\\
%w_{2} & w_{1} &\tfrac{x_{1} + x_{2} -2 x_{3}}{3} 
%\end{pmatrix}
\end{align}
\end{enumerate}
\end{lemma}

\begin{proof}
The two-forms $\al, \be, \ga$ are pairwise orthogonal. Specializing \eqref{albecmultgasi} yields \eqref{ssrproduct2} and the identities obtained from them by permuting $\al$, $\be$, and $\ga$. By \eqref{symalbecmgasi}, \eqref{cdw}, and \eqref{albecmultgasi} there hold 
\begin{align}\label{alcmbeya}
\begin{aligned}
&(\al \cdot \al)\opmult (\be \cdot \be)  =  \al \cdot \al + \be \cdot \be - 5\ga \cdot \ga,&&\\
&(\al \cdot \al)\opmult (\be \cdot \ga)  = 6\be \cdot \ga, &&
(\al \cdot \al)\opmult (\al \cdot \be)  = -\tfrac{r+2}{2}\be \cdot \ga,\\
&(\al \cdot \al)\opmult (g\kwedge g)  = -3g\kwedge g, &&
(\al \cdot \be)\opmult (g\kwedge g)  = 0,
\end{aligned}
\end{align}
and the identities obtained from them by permuting $\al$, $\be$, and $\ga$. The identities \eqref{ssrproduct} follow from \eqref{alcmbeya}. By \eqref{ssrproduct} and \eqref{ssrinorm},
\begin{align}
\begin{split}
 \lb \sS^{r}(\al),\sS^{r}(\be)\ra & =  \lb \sS^{r}(\al)\opmult \sS^{r}(\al),\sS^{r}(\be)\ra =   \lb \sS^{r}(\al), \sS^{r}(\al)\opmult \sS^{r}(\be)\ra \\
&= -\tfrac{1}{r+2}|\sS^{r}(\al)|^{2} - \tfrac{1}{r+2} \lb \sS^{r}(\al),\sS^{r}(\be)\ra + \tfrac{5}{r+2}\lb \sS^{r}(\al),\sS^{r}(\ga)\ra \\
&= -\tfrac{6r(r-2)}{(r+2)^{2}(r-1)} + \tfrac{4}{r+2}\lb \sS^{r}(\al),\sS^{r}(\be)\ra,
\end{split}
\end{align}
so that $\lb \sS^{r}(\al),\sS^{r}(\be)\ra = -\tfrac{6r}{(r+2)(r-1)}$. Combined with \eqref{ssrinorm} this shows that the cosine of the angle between $\sS^{r}(\al)$ and $\sS^{r}(\be)$ is $1/(2-r)$. The preceding claims remain true when $\al$, $\be$, and $\ga$ are permuted cyclically. The identities \eqref{alcmbeya} show $\balg$ as in \eqref{hcsubalgebra} is a subalgebra. By \eqref{ssrinorm} and \eqref{ssalbeangle},
\begin{align}
\begin{split}
\left|c_{1}\sS^{r}(\al) + c_{2}\sS^{r}(\be) + c_{3}\sS^{r}(\ga)\right|^{2} = \tfrac{6r\left( (r-4)(c_{1}^{2} + c_{2}^{2} + c_{3}^{2}) + (c_{1} - c_{2})^{2} + (c_{2} - c_{3})^{2} + (c_{3} - c_{1})^{2}\right)}{(r+2)(r-1)}.
\end{split}
\end{align}
It follows that $c_{1}\sS^{r}(\al) + c_{2}\sS^{r}(\be) + c_{3}\sS^{r}(\ga) = 0$ for $c_{i}$ not all zero if and only if $r = 4$ and $c_{1} = c_{2} = c_{3}$. Together with \eqref{ssrinorm}, \eqref{ssalbeangle}, and \eqref{albegaproducts} this implies both claims of \eqref{hcsubalgebra} straightforwardly.

By \eqref{albega} and \eqref{gasiopal}, there hold $\op{\al \cdot \al}_{\ext^{2}\std}(\al) = -(r+1)\al$, $\op{g\kwedge g}_{\ext^{2}\std}(\al) = -\al$, $\op{(\al \cdot \be)}_{\ext^{2}\std}(\al) = -\tfrac{r+2}{2}\be$, $\op{(\al \cdot \be)}_{\ext^{2}\std}(\ga) = 0$, and the identities obtained from these by permuting $\al$, $\be$, and $\ga$. Combining these identities with the definition of $\sS(\al)$ yields $\op{\sS(\al)}_{\ext^{2}\std}(\al)  =  \tfrac{r-2}{r-1}\al$ and $\op{\sS(\be)}_{\ext^{2}\std}(\al) = -\tfrac{1}{r-1}\al$. Claims \eqref{opalbepre} and \eqref{opalbe} follow from these identities.
\end{proof}

Lemma \ref{4dsubalgebralemma} identifies the $5$-dimensional subalgebra of \eqref{ssalbegasum} of Lemma \ref{hyperkahlerlemma}. 

\begin{lemma}\label{4dsubalgebralemma}
Let $(\ste, h)$ be a Euclidean vector space of dimension at least $4$. Let $\stw \subset \ste$ be a $4$-dimensional subspace and suppose $(\al, \be) \in \oc(\stw, h)^{2}$ determines a hyper-Kähler structure on $\stw$. Define $\sS(\al) = \sS^{4}(\al), \sS(\be) = \sS^{4}(\be), \sS(\ga) = \sS^{4}(\ga) \in \mcurvweyl(\std)$ as in \eqref{ssrdefined} (where $\ga = \al \circ \be$).
\begin{enumerate}
\item For $\sX$ contained in the $5$-dimensional subalgebra $\balg = \spn\{\sS(\al), \sS(\be), \sS(\ga), \al\cdot \be, \be \cdot \ga, \ga\cdot \al\}$, $\op{\sX}$ preserves $\stu = \spn\{\al, \be, \ga\} \subset \ext^{2}\std$.
\item\label{4dsubiso} The map $\Psi:\balg \to \eno(\stu)$ defined by $\Psi(\sX) = 3\op{\sX}_{\stu}$ is an isometric algebra isomorphism from $(\balg, \opmult, h)$ to the deunitalization $(\sherm_{0}(\stu, h), \jrd, \tfrac{4}{3}G)$ of the $6$-dimensional rank $3$ simple real Euclidean Jordan algebra $(\sherm(\stu, h), \jmult)$, in its realization as the trace-free symmetric endomorphisms of $\stu$ equipped with the product $\jrd$ equal to the traceless part of the usual Jordan product $\jmult$ of endomorphisms and the metric $G(A, B) = \tfrac{1}{3}\tr A \circ B$. 
\item The Killing form $\tau_{\opmult, \balg}(\sX, \sY) = \tr L_{\opmult, \balg}(\sX)L_{\opmult, \balg}(\sY)$ on $(\balg, \opmult)$ satisfies $\tau_{\opmult, \balg} = \tfrac{21}{16}\lb \dum, \dum \ra$.
\item The subalgebra $(\balg, \opmult, h)$ is simple.
\end{enumerate}
\end{lemma}
\begin{proof}
By \eqref{hcsubalgebra} of Lemma \ref{hyperkahlerlemma}, $\sS(\ga) = -\sS(\al) -\sS(\be)$. In \eqref{ssrproduct} and \eqref{ssrproduct2} this yields the relations
\begin{align}\label{ssalbe4d}
\begin{aligned}
&\sS(\al) \opmult \sS(\be) = -\sS(\al) -\sS(\be),& &\sS(\al)\opmult (\sS(\be) - \sS(\ga))  = -(\sS(\be) - \sS(\ga)),&\\
&(\al \cdot \ga) \opmult (\be \cdot \ga) = -\tfrac{9}{2}\al \cdot \be, &
&(\al \cdot \be) \opmult (\al \cdot \be) = 27(\sS(\al) + \sS(\be)),&\\
&\sS(\al)\opmult(\al \cdot \be)  = \tfrac{1}{2}\al \cdot \be,&
&\sS(\al)\opmult(\be \cdot \ga)  = -\be \cdot \ga,&
\end{aligned}
\end{align}
and those obtained from them by permuting $\al$, $\be$, and $\ga$. By \eqref{opalbe} of Lemma \ref{hyperkahlerlemma}, for
\begin{align}\label{sx4d}
\begin{split}
\sX &=  (2x_{1} + x_{2})\sS(\al) + (x_{1} + 2x_{2})\sS(\be) - \tfrac{1}{3}\left(z_{1}\be \cdot \ga + z_{2}\al \cdot \ga + z_{3}\al \cdot \be\right),
\end{split}
\end{align}
the matrix of $\op{\sX}_{\stu}$ with respect to the equal-norm orthogonal ordered basis $\{\al, \be, \ga\}$ of $\stu$ is 
\begin{align}\label{sxopstw}
\X = \begin{pmatrix*}[c]
x_{1} & z_{3} & z_{2} \\
z_{3} & x_{2}& z_{1}\\
z_{2} & z_{1} & - x_{1} - x_{2}
\end{pmatrix*}\in \sherm_{0}(\stu, h).
\end{align}
From \eqref{sxopstw} it is apparent that $\Psi: \balg \to \sherm_{0}(\stu, h)$ defined by $\Psi(\sX) = 3\op{\sX}_{\stu}$ is a linear isomorphism. Because $\opmult$ and $\jrd$ are commutative, by polarization, to check that $\Psi$ is an algebra homomorphism it suffices to check that $\Psi(\sX \opmult \sX) = \Psi(\sX)\jrd \Psi(\sX)$. By \eqref{ssalbe4d},
\begin{align}\label{sxcmsx4d}
\begin{split}
\sX \opmult \sX & = -3(2x_{1}x_{2} + x_{2}^{2} + z_{1}^{2} - z_{3}^{2})\sS(\al)  -3(2x_{1}x_{2} + x_{1}^{2} + z_{2}^{2} - z_{3}^{2})\sS(\be)\\
&\quad - (z_{1}z_{2} + (x_{1} + x_{2})z_{3})\al \cdot \be + (z_{2}z_{3} - x_{1}z_{1})\be \cdot \ga + (z_{1}z_{3} - x_{2}z_{2})\ga \cdot \al.
\end{split}
\end{align}
Comparing \eqref{sxcmsx4d} with
\begin{align}\label{sxopjrd}
\begin{split}
\X \jrd \X= \begin{pmatrix*}[c]
\tfrac{x_{1}^{2} - 2x_{2}^{2} - 2x_{1}x_{2} - 2z_{1}^{2} + z_{2}^{2} + z_{3}^{2}}{3} & z_{1}z_{2} + (x_{1} + x_{2})z_{3} &z_{1}z_{3} - x_{2}z_{2}\\
z_{1}z_{2} + (x_{1} + x_{2})z_{3} &\tfrac{-2x_{1}^{2} +x_{2}^{2} - 2x_{1}x_{2} +z_{1}^{2} -2 z_{2}^{2} + z_{3}^{2} }{3}& z_{2}z_{3} - x_{1}z_{1}\\
z_{1}z_{3} - x_{2}z_{2} &z_{2}z_{3} - x_{1}z_{1} &\tfrac{x_{1}^{2} + x_{2}^{2} + 4x_{1}x_{2}  + z_{1}^{2} + z_{2}^{2} - 2z_{3}^{2}}{3}
\end{pmatrix*},
\end{split}
\end{align}
shows that $\Psi(\sX \opmult \sX) = \Psi(\sX)\jrd\Psi(\sX)$. 

By claims \eqref{ssalbeangle} and \eqref{albegaproducts} of Lemma \ref{hyperkahlerlemma} with $r = 4$, 
\begin{align}\label{obasis4d}
\left\{\tfrac{\sqrt{3}}{2\sqrt{2}}(\sS(\al) + \sS(\be)), -\tfrac{1}{2\sqrt{2}}(\sS(\al) - \sS(\be)), \tfrac{1}{6\sqrt{2}}\be \cdot \ga, \tfrac{1}{6\sqrt{2}}\ga \cdot \al, \tfrac{1}{6\sqrt{2}}\al \cdot \be\right\}
\end{align}
is an orthonormal basis of $\balg$. 
By definition of $G$, \eqref{sxopstw}, and the orthonormality of \eqref{obasis4d} (used to compute the norm of \eqref{sx4d}),
\begin{align}\label{psiGisometry}
\begin{split}
\Psi^{\ast}(G)(\sX, \sX) &= \tfrac{1}{3}\tr\Psi(\sX)^2 = 3\tr \X^{2} = 6\left(x_{1}^{2} + x_{1}x_{2} + x_{2}^{2} + z_{1}^{2} + z_{2}^{2} + z_{3}^{2}\right) = \tfrac{3}{4}|\sX|^{2}.
\end{split}
\end{align}
Slightly tedious calculations using \eqref{ssalbe4d} show that the matrix of the restriction to $\balg$ of $L_{\opmult, \balg}(\sX)$ with respect to the ordered orthonormal basis \eqref{obasis4d} is
\begin{align}\label{Mmatrix}
M= 
\begin{pmatrix*}[c]
-\tfrac{3}{2}(x_{1} + x_{2}) & -\tfrac{\sqrt{3}}{2}(x_{1} - x_{2}) & \tfrac{\sqrt{3}}{2}z_{1} & \tfrac{\sqrt{3}}{2}z_{2} & -\sqrt{3}z_{3}\\
-\tfrac{\sqrt{3}}{2}(x_{1} - x_{2}) & \tfrac{3}{2}(x_{1} + x_{1}) & -\tfrac{3}{2}z_{1} & \tfrac{3}{2}z_{2} & 0\\
\tfrac{\sqrt{3}}{2}z_{1} & -\tfrac{3}{2}z_{1} & -\tfrac{3}{2}x_{1} & \tfrac{3}{2}z_{3} & \tfrac{3}{2}z_{2}\\
 \tfrac{\sqrt{3}}{2}z_{2} & \tfrac{3}{2}z_{2} & \tfrac{3}{2}z_{3} & -\tfrac{3}{2}x_{2} & \tfrac{3}{2}z_{1}\\
-\sqrt{3}z_{3} & 0 & \tfrac{3}{2}z_{2} & \tfrac{3}{2}z_{1} & \tfrac{3}{2}(x_{1} + x_{2})
\end{pmatrix*}
%-\begin{pmatrix*}[c]
%\tfrac{3}{2}(x_{1} + x_{2}) & \tfrac{\sqrt{3}}{2}(x_{1} - x_{2}) & -\tfrac{\sqrt{3}}{2}z_{1} & -\tfrac{\sqrt{3}}{2}z_{2} & \sqrt{3}z_{3}\\
%\tfrac{\sqrt{3}}{2}(x_{1} - x_{2}) & -\tfrac{3}{2}(x_{1} + x_{1}) & \tfrac{3}{2}z_{1} & -\tfrac{3}{2}z_{2} & 0\\
%-\tfrac{\sqrt{3}}{2}z_{1} & \tfrac{3}{2}z_{1} & \tfrac{3}{2}x_{1} & -\tfrac{3}{2}z_{3} & -\tfrac{3}{2}z_{2}\\
% -\tfrac{\sqrt{3}}{2}z_{2} & -\tfrac{3}{2}z_{2} & -\tfrac{3}{2}z_{3} & \tfrac{3}{2}x_{2} & -\tfrac{3}{2}z_{1}\\
%\sqrt{3}z_{3} & 0 & -\tfrac{3}{2}z_{2} & -\tfrac{3}{2}z_{1} & -\tfrac{3}{2}(x_{1} + x_{2})
%\end{pmatrix*}
\end{align}
Comparing \eqref{Mmatrix} with \eqref{psiGisometry} shows
\begin{align}
\begin{split}
\tau_{\balg, \opmult} &= \tr L_{\opmult, \balg}(\sX)L_{\opmult, \balg}(\sX) = \tr M^{2} = \tfrac{21}{2}\left(x_{1}^{2} + x_{1}x_{2} + x_{2}^{2} + z_{1}^{2} + z_{2}^{2} + z_{3}^{2} \right) = \tfrac{21}{16}|\sX|^{2}.
\end{split}
\end{align}
That $(\balg, \opmult, h)$ is simple follows from \eqref{4dsubiso} and Lemma \ref{deunitsimplelemma}.
\end{proof}

\section{Subalgebra of Kähler-Weyl tensors and the \texorpdfstring{$4$}{}-dimensional case revisited}\label{kahlersection}
A $2n$-dimensional \emph{Kähler vector space} $(\ste, h, J, \om)$ is a Euclidean vector space $(\ste, h)$ of dimension $m = 2n$ equipped with a compatible complex structure $J_{i}\,^{j}$, meaning that $J_{i}\,^{p}J_{j}\,^{q}h_{pq} = h_{ij}$ and $\om_{ij} = J_{i}\,^{p}h_{pj}$ is a symplectic form. When a Kähler vector space is fixed, the abstract unitary group $U(n)$ is identified with the unitary group $U(h, J)$ of linear automorphisms of $\ste$ preserving $h$ and $J$.

\begin{lemma}\label{kahlersubalgebralemma}
Let $(\ste, h, J, \om)$ be a $2n$-dimensional Kähler vector space. The $U(n)$-submodules
\begin{align}\label{mcurvkahkerdefined}
\begin{split}
\mcurvkahler(\std) &= \{\sX \in \mcurv(\std): J_{i}\,^{a}J_{j}\,^{b}\sX_{abkl} = \sX_{ijkl}\}
= \{\sX \in \mcurv(\std):J_{[i}\,^{p}\sX_{j]pkl} = 0\},
\end{split}
\end{align}
of curvature tensors of \emph{Kähler type}, and $\mcurvkahlerweyl(\std) = \mcurvkahler(\std) \cap \mcurvweyl(\std)$, of \emph{Kähler-Weyl} curvature tensors, are subalgebras of $(\mcurv(\std), \opmult)$ on which $U(n)$ acts by automorphisms.
\end{lemma}

\begin{proof}
Let $\sX, \sY \in \mcurvkahler(\std)$. Write $B_{ijkl} = B(\sX, \sY)_{ijkl}$. 
By \eqref{prebelskew},
\begin{align}\label{mck1}
\begin{split}
8J_{i}\,^{a}J_{j}\,^{b}B_{ab[kl]} = J_{i}\,^{a}J_{j}\,^{b}(\sX_{ab}\,^{pq}\sY_{pqkl} + \sY_{ab}\,^{pq}\sX_{pqkl}) = \sX_{ij}\,^{pq}\sY_{pqkl} + \sY_{ij}\,^{pq}\sX_{pqkl} = 8B_{ij[kl]}.
\end{split}
\end{align}
Similarly,
\begin{align}\label{mck2}
\begin{split}
2&J_{j}\,^{a}J_{l}\,^{b}B_{iakb} = J_{j}\,^{a}J_{l}\,^{b}\sX_{ipaq}\sY_{k}\,^{p}\,_{b}\,^{q} + J_{j}\,^{a}J_{l}\,^{b}\sY_{ipaq}\sX_{k}\,^{p}\,_{b}\,^{q}\\
& = J_{q}\,^{a}J_{bl}\sX_{ipja}\sY_{k}\,^{pbq} +  J_{q}\,^{a}J_{bl}\sY_{ipja}\sX_{k}\,^{pbq} 
 = - J_{q}\,^{a}J_{b}\,^{q}\sX_{ipja}\sY_{k}\,^{p}\,_{l}\,^{b}- J_{q}\,^{a}J_{b}\,^{q}\sY_{ipja}\sX_{k}\,^{p}\,_{l}\,^{b}\\
& = \sX_{ipjb}\sY_{k}\,^{p}\,_{l}\,^{b} + \sY_{ipjb}\sX_{k}\,^{p}\,_{l}\,^{b} = 2B_{ijkl}.
\end{split}
\end{align}
Together \eqref{curvmultdefined}, \eqref{mck1}, and \eqref{mck2} yield
\begin{align}
\begin{split}
J_{i}\,^{a}J_{j}\,^{b}(\sX \opmult\sY)_{abkl} &= J_{i}\,^{a}J_{j}\,^{b}\left(-2B_{ab[kl]} - B_{akbl} + B_{alkb}\right)
 = -2B_{ij[kl]} - 2B_{i[k|j|l]}   = (\sX \opmult \sY)_{ijkl}.
\end{split}
\end{align}
This shows $\mcurvkahler(\std)$ is a subalgebra of $(\mcurv(\std), \opmult)$. 
By Lemmas \ref{opweylsubalgebralemma} and \ref{kahlersubalgebralemma}, $\mcurvkahlerweyl(\std) = \mcurvkahler(\std) \cap \mcurvweyl(\std)$ is a subalgebra of $(\mcurv(\std), \opmult)$.
That $U(n)$ acts by automorphisms on these subalgebras follows from the containment $U(n) \subset O(2n)$.
\end{proof}

A Kähler vector space is canonically oriented by the Euclidean volume form $\ep = \tfrac{1}{n!}\om^{n}$. 
Lemma \ref{kahlerweylasdlemma} shows that, for a $4$-dimensional Kähler vector space, the space of Weyl curvature tensors anti-self-dual with respect to the orientation determined by the complex structure coincides with the space of Kähler-Weyl curvature tensors. As is explained subsequently, this has the consequence of showing that the subalgebra $(\mcurvkahlerweyl(\std), \opmult)$ is nontrivial whenever $\dim \std \geq 4$.

\begin{lemma}\label{kahlerweylasdlemma}
Let $(\ste, h, J, \om)$ be a $4$-dimensional Kähler vector space oriented by $\ep = \tfrac{1}{2}\om \dwedge \om$.
\begin{enumerate}
\item\label{skalbedefined} For $\al, \be \in \ext^{2}_{-}\std$, $\sK(\al, \be) = \al \cdot \be + (\al \circ \om)\kwedge (\be \circ \om)$ is contained in $\mcurvkahler(\std)$.
\item\label{4dtfsk} If $\al, \be \in \ext^{2}_{-}\std$, then $\tf(\al \cdot \be) = \tfrac{3}{4}\tf \sK(\al, \be) \in \mcurvkahlerweyl(\std)$.
\item\label{4dskwal} If $\al \in \ext^{2}_{-}(\std)$ satisfies $|\al|^{2}_{h} = 4$, then $\sS(\al)  = -\tfrac{1}{6}\tf(\al \cdot \al) = -\tfrac{1}{8}\tf \sK(\al, \al)$, where $\sS(\al)$ is as defined in \eqref{ssrdefined}, is a nontrivial idempotent in $(\mcurvkahlerweyl(\std), \opmult)$ satisfying $|\sS(\al)|^{2} = \tfrac{8}{3}$.
\item\label{kahlerweylintersection} $\mcurvkahlerweyl(\std) = \mcurvweyl^{-}(\std)$.
\end{enumerate}
\end{lemma}
\begin{proof}
The operator $\jstar \in \eno(\ext^{2}\std)$ can be expressed $\jstar = \tfrac{1}{2}\ep^{\sharp} = \tfrac{1}{2}(\om \tensor \om)^{\sharp} + \qp_{\jmult}(\om)$, (recall  from \eqref{sharp4defined} the isomorphism $\sharp$ and its inverse $\flat$).
It follows that $\al \in \ext^{2}\std$ is contained in $\ext^{2}_{-}\std$ if and only if $\lb \al, \om\ra =0$ and $\qp_{\jmult}(\om)(\al) = -\al$. The latter condition is equivalent to $\al\circ \om = \om \circ \al \in S^{2}\std$, and this shows that the element $\sK(\al, \be)$ of \eqref{skalbedefined} is correctly defined.

Suppose $\al, \be \in \ext^{2}\std$. Substituting $\tfrac{1}{2}\lb \al, \star \be \ra \ep  = \al \dwedge \be$ into \eqref{cdotdefined} yields the alternative expressions
\begin{align}\label{4dcdot}
\begin{split}
\al \cdot \be &= \tfrac{3}{2}(\al \tensor \be + \be \tensor \al) - \tfrac{1}{4}\lb \al, \star \be \ra \ep= \tfrac{3}{2}(\al \tensor \be + \be \tensor \al) - \tfrac{1}{8}\lb \al, \star \be \ra \om \dwedge \om.
\end{split}
\end{align}
Taking $\al = \be = \om$ in \eqref{4dcdot} yields
\begin{align}\label{4domcdot}
\om \cdot \om = 3\om\tensor \om - \tfrac{1}{2} \om \dwedge \om = 3\om\tensor \om -\ep.
\end{align}
By Example \ref{hhidexample}, $2\Id_{\ext^{2}\std}^{\flat} = -2\op{h \kwedge h}_{\ext^{2}\std}^{\flat} = h\kwedge h$, so
\begin{align}\label{kahlerep}
(\qp_{\jmult}(\om)\circ \ep^{\sharp})^{\flat} =- \om \tensor \om + 2\Id_{\ext^{2}\std}^{\flat}=  - \om \tensor \om +h \kwedge h.
\end{align}
Combining $\al \dwedge \be = \tfrac{1}{2}\lb \al, \star \be \ra \ep$ with \eqref{kahlerep} and noting $(\al \dwedge \be)^{\sharp} = (\al \tensor \be + \be \tensor \al)^{\sharp} + 4\qp_{\jmult}(\al ,\be)$ yields
\begin{align}\label{jjalwedgebe}
\begin{split}
\tfrac{1}{2}\lb \al , \star \be \ra(\om\tensor \om - h\kwedge h) & = -\tfrac{1}{2}\lb \al, \jstar \be \ra\left(\qp_{\jmult}(\om)\circ \ep^{\sharp}\right)^{\flat} = - \left(\qp_{\jmult}(\om)\circ (\al \dwedge \be)^{\sharp}\right)^{\flat}\\
& = -\left(\qp_{\jmult}(\om)(\al)\tensor \be + \al \tensor\qp_{\jmult}(\om)(\be)\right)- 4\left(\qp_{\jmult}(\om)\circ\qp_{\jmult}(\al ,\be)\right)^{\flat}.
\end{split}
\end{align}
Now suppose $\al, \be \in \asdf\std$. In this case, by \eqref{gasiopal}, 
\begin{align}
-\qp_{\jmult}(\om)\circ\qp_{\jmult}(\al ,\be)= -(\qp_{\jmult}(\al \circ \om, \be \circ \om) = \op{(\al \circ \om) \kwedge (\be \circ \om)}_{\ext^{2}\std},
\end{align}
and, because $-2\op{(\al \circ \om) \kwedge (\be \circ \om)}_{\ext^{2}\std}^{\flat} = (\al \circ \om) \kwedge (\be \circ \om)$, substituting this into \eqref{jjalwedgebe} yields
\begin{align}\label{jjalwedgebeasd}
\begin{split}
(\al \circ \om) \kwedge (\be \circ \om) & = \tfrac{1}{2}(\al \tensor \be + \be \tensor \al) + \tfrac{1}{4}\lb \al, \be \ra(\om\tensor \om - h\kwedge h).
\end{split}
\end{align}
Substituting \eqref{4dcdot} and \eqref{4domcdot} into \eqref{jjalwedgebeasd} and simplifying the result using \eqref{kahlerep} yields
\begin{align}\label{skalbe4d}
\begin{split}
\sK(\al, \be) &= \al \cdot \be + (\al \circ \om) \kwedge (\be \circ \om)  = \tfrac{4}{3} \al \cdot \be + \tfrac{1}{12}\lb \al,  \be \ra  \om\cdot \om   -  \tfrac{1}{4}\lb \al, \be \ra  h\kwedge h\\
& = 2(\al \tensor \be + \be \tensor \al) + \tfrac{1}{4}\lb \al, \be\ra\left(\ep + \om \tensor \om  - h \kwedge h\right).	
\end{split}
\end{align}
A straightforward calculation using the last equality of \eqref{skalbe4d} shows $\qp_{\jmult}(\om)\circ \sK(\al, \be)^{\sharp} = - \sK(\al, \be)^{\sharp}$, so $\sK(\al, \be) \in \mcurvkahler(\std)$. 
By \eqref{skalbe4d}, \eqref{tfalcdotbe}, and \eqref{4dso1} of Lemma \ref{albecontractlemma},
\begin{align}\label{tfalbecdot4d}
\tfrac{3}{4}\tf \sK(\al, \be) &= \tf(\al \cdot \be) = \al \cdot \be + 3(\al \jmult \be) \kwedge h + \tfrac{1}{2}\lb \al, \be \ra h \kwedge h = \al \cdot \be - \tfrac{1}{4}\lb \al, \be\ra h \kwedge h.
\end{align}
This shows \eqref{4dtfsk}.
If $\al \in \ext^{2}_{-}(\std)$, then $\al \dwedge \al = -\al \dwedge \star \al = -\tfrac{1}{4}|\al|^{2}\om \dwedge \om$, so if $|\al|^{2}_{h} = 4$, then $\al \dwedge \al= - \om \dwedge \om$. Hence $0 = (\al \dwedge \al + \om \dwedge \om)^{\sharp}(\om) = 4(\qp_{\jmult}(\al)(\om) + \om)$, so $\al \circ \om \circ \al = -\om$. This implies $\al \circ \al = - \al \circ \om \circ \om \circ \al = -\al \circ \om \circ \al \circ \om  = \om \circ \om = -h$, showing $\al$ satisfies the hypotheses of Lemma \ref{weylidempotentslemma}. Taking $\be = \al$ in \eqref{tfalbecdot4d} yields \eqref{4dskwal}. 

For $\sX \in\mcurvkahler(\std)$, pairing $J_{i}\,^{p}\sX_{pjkl} =  J_{j}\,^{p}\sX_{ipkl}$ with $h^{il}$ yields $\om^{pq}\sX_{pqjk} = -2\om^{pq}\sX_{pjkq} = J_{j}\,^{p}\rictr(\sX)_{pk}$, so, because $\sX \in\mcurvkahlerweyl(\std)$, $(\star \sX)_{ijkl} = (\tfrac{1}{2}\om_{ij}\om^{ab} - J_{i}\,^{a}J_{j}\,^{b})\sX_{abkl} = -\sX_{ijkl}$. This shows that $\mcurvkahlerweyl(\std) \subset \mcurvweyl^{-}(\std)$. Since both $\mcurvkahlerweyl(\std)$ and $\mcurvweyl^{-}(\std)$ are $SU(2)$-modules and $\mcurvweyl^{-}(\std)$ is an irreducible $SU(2)$-module, to prove equality it suffices to show that $\mcurvkahlerweyl(\std)$ has dimension at least $1$. By the preceding paragraph there exists $\al\in \asdf\std$ such that $\tf(\al \cdot \al)$ is nontrivial and is contained in $\mcurvkahlerweyl(\std)$. This proves claim \eqref{kahlerweylintersection}. 
\end{proof}

\begin{theorem}\label{einsteinkahlertheorem}
Let $(\ste, h, J, \om)$ be a Kähler vector space of dimension $m = 2n \geq 4$. The subspace $\mcurvkahlerweyl(\std)$ is a simple subalgebra of $(\mcurvweyl(\std), \opmult)$ that is exact and Killing metrized, with Killing form equal to a positive multiple of the metric $\lb \dum, \dum \ra$, and on which $U(n) = U(\ste, J, h)$ acts irreducibly by automorphisms. 
\end{theorem}

\begin{proof}
The group $U(n)$ acts irreducibly on $\mcurvkahlerweyl(\std)$ \cite[Theorem $6.4$]{Tricerri-Vanhecke}. 
It is straightforward to check that when, in the setting of Lemma \ref{mcsubspacelemma}, $\ste$ is equipped with a Kähler structure and $\stw \subset \ste$ is a Kähler subspace, the map $\imt$ of Lemma \ref{mcsubspacelemma} is an injective algebra homomorphism from $(\mcurvkahler(\stwd), \opmult)$ and $(\mcurvkahlerweyl(\stwd), \opmult)$ to $(\mcurvkahler(\std), \opmult)$ and $(\mcurvkahlerweyl(\std), \opmult)$, respectively. Consequently, by Lemmas \ref{mcsubspacelemma} and \ref{kahlerweylasdlemma}, $(\mcurvkahlerweyl(\std), \opmult)$ contains a nontrivial idempotent, so its multiplication is nontrivial. 
The group $U(n)$ acts on $(\mcurvkahlerweyl(\std), \opmult)$ by isometric automorphisms because it is a subgroup of $O(2n)$, which acts on $(\mcurv(\std), \opmult)$ by algebra automorphisms.
By Theorem \ref{preeinsteintheorem}, $(\mcurvkahlerweyl(\std), \opmult)$ is exact and Killing metrized, with Killing form equal to a positive multiple of the metric $\lb \dum, \dum \ra$. Because every finite-dimensional irreducible representation of $U(n)$ restricts to an irreducible representation of $SU(n)$, $\mcurvkahlerweyl(\std)$ is irreducible as an $SU(n)$-module. Since the action by automorphisms of $SU(n)$ on $\mcurvkahlerweyl(\std)$ is irreducible, the simplicity of $(\mcurvkahlerweyl(\std), \opmult)$ follows from Theorem \ref{simpletheorem}.
\end{proof}

\section{Examples related with idempotents}\label{computationsection}
This final section shows the viability of making explicit computations in $(\mcurvweyl(\std), \opmult)$ and describes some relations among the idempotents constructed earlier that are suggestive both with respect to the structure of $(\mcurvweyl(\std), \opmult)$ and in the context of developing a structure theory for commutative algebras metrized by a trace-form. It is indicated how this yields an alternative proof of Theorem \ref{mcurvweyltheorem}. Theorem \ref{splitcomplextheorem} shows that $(\mcurvweyl(\std), \opmult)$ contains many two-dimensional unital associative subalgebras isomorphic to the paracomplex numbers and exhibits zeros of its cubic polynomial. The final Example \ref{matsuoexample} shows that, when $\dim \ste \geq 6$, $(\mcurvweyl(\std), \opmult)$ contains subalgebras isomorphic to Matsuo algebras.

\begin{lemma}
Let $(\ste, h)$ be a Euclidean vector space. For $x, y, z, w \in \std$, 
\begin{align}\label{xykwedgezwcycle}
\begin{split}
0  =(x\sprod y)\kwedge (z\sprod w) &+ (y\sprod z)\kwedge (x\sprod w) + (z\sprod x)\kwedge (y\sprod w),\\
0 = (x\dwedge y) \cdot (z \dwedge w) &+ (y\dwedge z) \cdot (x \dwedge w) + (z\dwedge x) \cdot (y \dwedge w),
\end{split}\\
\label{mixedidentities}
12 (x\sprod y)\kwedge (z\sprod w) &
% = 3(x \dwedge z) \sprod (y \dwedge w)+ 3(y\dwedge z) \sprod (x \dwedge w)  \\& %%omitted for space
= (x \dwedge z) \cdot (y \dwedge w)+ (y\dwedge z) \cdot (x \dwedge w),\\
\label{mixedidentitiesb}
 (x\dwedge y)\cdot (z \dwedge w)&= 4(x\sprod z)\kwedge (y\sprod w) - 4(y\sprod z)\kwedge (x \sprod w).
\end{align}
\end{lemma}

\begin{proof}
The identities \eqref{xykwedgezwcycle} follow from the definitions \eqref{kwedgedefined} and \eqref{cdotdefined}; precisely, summing $(x\sprod y)\sprod (z \sprod w)$ and $(x \dwedge y)\sprod (z \dwedge w)$ cyclically over $x$, $y$, and $z$ yields elements in $\ker \stwoprojdual$ and $\ker\stwoproj$, respectively.
By the definitions, \eqref{kwedgedefined}, and \eqref{cdotdefined},
\begin{align}\label{xxkwedgeyy}
\begin{split}
 - 12&(x\sprod y)\kwedge (x\sprod y)  \\
&= 3((x + y)\sprod(x+y) - x\sprod x - y \sprod y)\kwedge ((x + y)\sprod(x+y) - x\sprod x - y \sprod y) \\
&=  3(x \tensor x + y \tensor y)\kwedge (x \tensor x + y \tensor y) = 6(x\tensor x)\kwedge (y \tensor y) \\
&= 3(x\dwedge y)\tensor (x\dwedge y) = (x\dwedge y)\cdot (x\dwedge y) .
\end{split}
%12(x\sprod y)\kwedge (x\sprod y) &= -3(x\dwedge y)\tensor (x\dwedge y) = -(x\dwedge y)\cdot (x\dwedge y).
\end{align}
Polarizing \eqref{xxkwedgeyy} first in $x$ then in $y$ and using \eqref{xykwedgezwcycle} yields \eqref{mixedidentities}. Using \eqref{mixedidentities} and \eqref{xykwedgezwcycle} to evaluate the right-hand side of \eqref{mixedidentitiesb} yields the left-hand side of \eqref{mixedidentitiesb}.
\end{proof}

\begin{example}\label{tfbebeexample}
Let $(\ste, h)$ be a Euclidean vector space. 
Let $x, y \in \std$ be orthogonal unit norm vectors. 
Let $\si = x \dwedge y$, so that $\si \circ \si = -\al$ where $\al = x \tensor x + y \tensor y$. 
Then $\al$ satisfies $\al \circ \al = \al$, $\tr \al = 2$, and, by \eqref{xxkwedgeyy}, $\si \cdot \si = 3\al \kwedge \al$, 
so that $4\sS^{2}(x\dwedge y) = 3\al \kwedge \al - \si \cdot \si = 0$. On the other hand, by \eqref{alalsquared}, Lemma \ref{weylidempotentslemma}, and \eqref{xxkwedgeyy}, $\sH(\al) = -\al \kwedge \al =  - (x\dwedge y) \tensor (x \dwedge y) = \sK^{2}(\si)$ is an idempotent in $(\mcurv(\std), \opmult)$ satisfying $\rictr(\al \kwedge \al) = \al$. 
Similarly, if $\{x, y, z, w\}\subset  \std $ is an orthonormal set, 
\begin{align}
\begin{split}
\left((x\dwedge y)\tensor (x \dwedge y)\right)\opmult\left((y\dwedge z)\tensor (y \dwedge z)\right) &=- \tfrac{1}{2}(x\dwedge z)\tensor (x\dwedge z),\\
\left((x\dwedge y)\tensor (x \dwedge y)\right)\opmult\left((z\dwedge w)\tensor (z \dwedge w)\right) &= 0.
\end{split}
\end{align}
These are most easily computed using \eqref{symalbecmgasi} or \eqref{albecmultgasi} in conjunction with \eqref{xxkwedgeyy}.  
By Corollary \ref{kwedgeidempotentcorollary}, as the idempotents $\al = x\tensor x + y \tensor y, \be = z\tensor z + w \tensor w \in \idem(S^{2}, \jmult)$ are orthogonal,
\begin{align}\label{sbxyzw}
\begin{split}
\sB&(x, y, z, w)  = \sB(\al, \be) =  -\tfrac{2}{3}\left((x\dwedge y)\tensor (x \dwedge y) + (z\dwedge w)\tensor(z \dwedge w)\right) \\
&\quad + \tfrac{1}{3}\left((x\dwedge z)\tensor (x \dwedge z) + (x\dwedge w)\tensor (x \dwedge w)+ (y\dwedge z)\tensor (y \dwedge z)+ (y\dwedge w)\tensor (y \dwedge w) \right)
\end{split}
\end{align}
is an idempotent in $(\mcurvweyl(\std), \opmult)$ satisfying $|\sB(x, y, z, w)|^{2} = 16/3$. The symmetries $\sB(y, x, z, w) =\sB(x, y, z, w) = \sB(x, y, w, z) = \sB(z, w, x, y)$, evident from \eqref{sbxyzw}, show that \eqref{sbxyzw} yields only three distinct idempotents, namely $\sB(x, y, z, w)$, $\sB(y, z, x, w)$, and $\sB(z, x, y, w)$. From \eqref{sbxyzw} it is apparent that $\sB(x, y, z, w) + \sB(y, z, x, w) + \sB(z, x, y, w) =0$. This shows $-\sB(x, y, z, w) - \sB(y, z, x, w)$ is idempotent, which implies
\begin{align}
\sB(x, y, z, w) \opmult \sB(y, z, x, w) = \sB(z, x, y, w) = - \sB(x, y, z, w) - \sB(y, z, x, w).
\end{align}
This shows $\sB(x, y, z, w)$, $\sB(y, z, x, w)$, and $\sB(z, x, y, w)$ span a $2$-dimensional subalgebra isomorphic to the algebra $\ealg^{2}(\rea)$ called the simplicial algebra in \cite{Fox-simplicial}; it is the unique $2$-dimensional metrized commutative algebra with automorphism group $S_{3}$. 
\end{example}

%\begin{example}\label{idempotentexistenceexample}
\begin{theorem}\label{splitcomplextheorem}
Let $(\ste, h)$ be a Euclidean vector space of dimension $n \geq 4$. For an $h$-orthonormal set $\{x, y, z, w\} \subset \std$,
\begin{align}\label{sedefined}
\sE = \sE(x, y, z, w) = \tfrac{2}{3}(x \dwedge y)\cdot (z \dwedge w) \in \mcurvweyl(\std)
\end{align}
is a zero of the cubic polynomial $P_{\mcurvweyl(\std), \opmult}$ that satisfies:
\begin{enumerate}
\item $\sE\opmult(\sE \opmult \sE) = \sE$ and $\sE \opmult \sE = \sB(x, y, z, w)$ is idempotent.
\item The subspace $\spn\{\sE, \sE\opmult\sE\} \subset \mcurvweyl(\std)$ is an associative subalgebra isomorphic to the algebra $\rea[t]/(t^{2} - 1)$ of paracomplex numbers via the linear map $a\sE\opmult \sE + b \sE \to a + bt$.
\end{enumerate}
\end{theorem}

\begin{proof}
The two-forms $\al^{\pm}(x, y, z, w) = \al^{\pm} = x\dwedge y \pm z \dwedge w$ satisfy $\al^{\pm} \circ \al^{\pm} = -g$ for $g = x\tensor x + y\tensor y + z \tensor z + w\tensor w$ and $[\al^{+}, \al^{-}] = 0$. Because $\lb \al^{+}, \al^{-}\ra = 0$ and $\al^{+}\circ \al^{-}\circ \al^{+}\circ \al^{-} = g$, by \eqref{ssangle}, $\lb \sS^{4}(\al^{+}), \sS^{4}(\al^{-})\ra = 0$. Define $\sS^{\pm}(x, y, z, w) = \sS^{4}(\al^{\pm})$. 
By \eqref{xxkwedgeyy},
\begin{align}\label{orthonormalmixedidentities}
\begin{aligned}
(&x\dwedge y) \cdot (x \dwedge y) +(z\dwedge w) \cdot (z \dwedge w)  - (x\dwedge z)\cdot (x\dwedge z) - (y\dwedge w) \cdot (y \dwedge w),\\
&= -12\left((x\sprod y) \kwedge (x \sprod y) +(z\sprod w) \kwedge (z \sprod w)  
- (x\sprod z)\kwedge (x\sprod z) - (y\sprod w) \kwedge (y \sprod w) \right),\\
&= 6\left(x\tensor x- w \tensor w\right)\kwedge\left(y\tensor y - z\tensor z\right). 
\end{aligned}
\end{align}
By \eqref{mixedidentities}, \eqref{mixedidentitiesb}, and \eqref{orthonormalmixedidentities},
\begin{align}\label{ssxyzw}
\begin{split}
6&\sS^{\pm}(x, y, z, w) = \mp 2(x \dwedge y)\cdot (z \dwedge w)  
- \tfrac{2}{3}\left((x\dwedge y) \cdot (x \dwedge y) + (z\dwedge w) \cdot (z \dwedge w)\right)\\
&+\tfrac{1}{3}\left( (x\dwedge z) \cdot (x \dwedge z)+  (y\dwedge w) \cdot (y \dwedge w) +(y\dwedge z) \cdot (y \dwedge z)+ (x\dwedge w) \cdot (x \dwedge w) \right).
%%SAVE: following is not used
%\\& = \mp 8(x\sprod z)\kwedge (y \sprod w) \pm 8(y\sprod z)\kwedge (x \sprod w) \\&\quad
%- 2\left( (x\tensor x - w \tensor w)\kwedge(y\tensor y - z \tensor z) + (y\tensor y - w \tensor w)\kwedge(x\tensor x - z \tensor z) \right).
\end{split}
\end{align}
Comparing \eqref{sbxyzw} and \eqref{sedefined} with \eqref{ssxyzw} shows that
\begin{align}\label{sspmsb}
\begin{split}
\sS^{+}(x, y, z, w) + \sS^{-}(x, y, z, w) &= \sB(x, y, z, w),\\
 \sS^{-}(x, y, z, w) - \sS^{+}(x, y, z, w) &= \sE(x, y, z, w). 
\end{split}
\end{align}
Write $\sS^{\pm} = \sS^{\pm}(x, y, z, w)$ and $\sB = \sB(x, y, z, w)$.
Because $\sS^{+}$, $\sS^{-}$, and $\sB(x, y, z, w)$ are idempotents in $(\mcurvweyl(\std), \opmult)$, \eqref{sspmsb} implies $\sS^{+}\opmult \sS^{-} = 0$ and $\lb \sS^{+}, \sS^{-}\ra = 0$. 
By \eqref{sspmsb}, $\sE\opmult \sE = \sS^{-} + \sS^{+} = \sB$. This shows $\sE\opmult \sE$ is idempotent and $\lb \sE, \sE\opmult \sE\ra = \lb \sS^{-} - \sS^{+}, \sS^{-} + \sS^{+}\ra = |\sS^{-}|^{2} - |\sS^{+}|^{2} = 0$, the last equality because $|\sS^{\pm}(x, y, z, w)|^{2} = 8/3$ by Lemma \ref{ssranglelemma} or direct computation using \eqref{ssxyzw}. Finally, $\sE \opmult (\sE \opmult \sE) = \sE \opmult \sB = (\sS^{-} - \sS^{+})\opmult (\sS^{-} +\sS^{+}) = \sS^{-} - \sS^{+} = \sE$. The final claim follows.
\end{proof}

For an $h$-orthonormal set $\{x, y, z, w\} \subset \std$, by \eqref{ssxyzw} and \eqref{xykwedgezwcycle}, 
\begin{align}\label{sscyclic}
\sS^{\pm}(x, y, z, w) + \sS^{\pm}(y, z, x, w) + \sS^{\pm}(z, x, y, w) = 0.
\end{align} 
The symmetries $\sS^{\pm}(x, y, z, w) = \sS^{\pm}(z, w, x, y) = \sS^{\pm}(y, x, w, z) = \sS^{\pm}(w, z, y, x)$ and $\sS^{+}(y, x, z, w) = \sS^{-}(x, y, z, w)$, evident from \eqref{ssxyzw}, show that, under the action of $S_{4}$ permuting $\{x, y, z, w\}$, \eqref{ssxyzw} yields only six distinct idempotents, namely $\sS^{\pm}(x, y, z, w)$, $\sS^{\pm}(y, z, x, w)$, and $\sS^{\pm}(z, x, y, w)$. By \eqref{sscyclic}, $-\sS^{\pm}(x, y, z, w) - \sS^{\pm}(y, z, x, w)$ is idempotent, which implies
\begin{align}\label{sspmxyzw}
\sS^{\pm}(x, y, z, w) \opmult \sS^{\pm}(y, z, x, w)  = -\sS^{\pm}(x, y, z, w) - \sS^{\pm}(y, z, x, w)= \sS^{\pm}(z, x, y, w).
\end{align}
(Alternatively, \eqref{sspmxyzw} follows from Lemma \ref{hyperkahlerlemma}.) This shows $\sS^{\pm}(x, y, z, w)$, $\sS^{\pm}(y, z, x, w)$, and $\sS^{\pm}(z, x, y, w)$ span a $2$-dimensional subalgebra isomorphic to $\ealg^{2}(\rea)$.
Computations using \eqref{alcdotbegacdotsinorm}-\eqref{alcdotbegawedgesinorm}, as in the proof of Lemma \ref{weylidempotentslemma}, or computations using $\al^{+}(x, y, z, w)\jmult \al^{+}(y, z, x, w) = 0$ and $[\al^{+}(x, y, z, w), \al^{-}(y, z, x, w)] = 0$ and those relations obtained from them via the action of $S_{4}$ on $\{x, y, z, w\}$ together with \eqref{ssangle} of Lemma \ref{ssranglelemma}, show that $ \sS^{\pm}(x, y, z, w)$ is orthogonal to the span of $\sS^{\mp}(x, y, z, w)$ and $\sS^{\mp}(y, z, x, w)$ while $|\sS^{\pm}(x, y, z, w)|^{2} = 8/3$ and $\lb \sS^{\pm}(x, y, z, w), \sS^{\pm}(y,z, x, w)\ra = -4/3$. The last identity gives an example where equality holds in \eqref{ssangle}. 

%%SAVE this remark for me: (to get the signs right, one must keep in mind that \eqref{xycirczw} implies $(x\dwedge y)\circ (y\dwedge z) = z \tensor x$)
Define $\sY(x, y, z, w) = (x\dwedge z)\cdot (z \dwedge z) - (y\dwedge w)\cdot (y \dwedge w)$.
By Lemma \ref{4dsubalgebralemma} the element
\begin{align}
\begin{split}
\sC^{\pm}(x, y, z, w) &= \al^{\pm}(x, y, z, w) \cdot \al^{\pm}(y, z, x, w)  \\&= \sY(x, z, w, y) \pm \sY(y, w, x, z)
%&= (x\dwedge w)\cdot (z \dwedge w) - (x\dwedge y)\cdot (z \dwedge y) \pm  (y\dwedge x)\cdot (w \dwedge x) \mp (y\dwedge z)\cdot (w \dwedge z)\\
= \al^{\mp}(z, y, x, w) \cdot \al^{\mp}(y, x, z, w).
\end{split}
\end{align}
and its cyclic permutations in $x$, $y$, and $z$ are orthogonal eigenvectors of $L_{\opmult}(\sS^{\pm}(x, y, z, w))$. By definition $\sS^{\pm}(x, y, z, w) = \sS^{4}(\al^{\pm})$, $\sS^{\pm}(y, z, x, w) = \sS^{4}(\be^{\pm})$, and $\sS^{\pm}(z, x, y, w) = \sS^{4}(\ga^{\pm})$, where $\al^{\pm} = \al^{\pm}(x, y, z, w)$, 
$\be^{\pm} = \al^{\pm}(y, z, x, w)$, and $\ga^{\pm} = \al^{\pm}(z, x, y, w)$. There hold $\al^{\pm} \circ \al^{\pm} = \be^{\pm} \circ \be^{\pm} = \ga^{\pm} \circ \ga^{\pm} = -g$, 
and $\al^{\pm} \circ \be^{\pm} = \ga^{\pm} = -\be^{\pm} \circ \al^{\pm}$ and its cyclic permutations. 
Computations using $\al^{+}\circ \al^{-} = \al^{-}\circ \al^{+} = -x\tensor x - y\tensor y + z\tensor z + w \tensor w$; $\al^{\pm}\circ \be^{\mp} = \be^{\mp}\circ \al^{\pm} = 2x\sprod z \pm 2y \sprod w$; $\al^{+}\circ \ga^{-} = \ga^{-}\circ \al^{+} = 2y\sprod z - 2x \sprod w$; that, by \eqref{cdwpre}, $(\al^{\pm}\cdot \al^{\pm})\opmult (g\kwedge g) = -3g\kwedge g$ and $(\al^{\pm}\cdot \be^{\pm})\opmult (g \kwedge g) = 0$; \eqref{alalcmultbebe}; \eqref{xxkwedgeyy}; and \eqref{mixedidentities} yield
%\begin{align}
%\begin{split}
%(\al^{+}\cdot \al^{+})\opmult(\al^{-}\cdot \al^{-}) &= -\al^{+}\cdot \al^{+} - \al^{-}\cdot \al^{-} + 3(\al^{+}\circ \al^{-})\kwedge(\al^{+}\circ \al^{-}) = -3g\kwedge g,\\
%%(\al^{+}\cdot \al^{+})\opmult(\be^{-}\cdot \be^{-}) &= -\al^{+}\cdot \al^{+} - \be^{-}\cdot \be^{-} + 3(\al^{+}\circ \be^{-})\kwedge(\al^{+}\circ \be^{-})= -3g\kwedge g,
%\end{split}
%\end{align}
%and, similarly,
$(\al^{+}\cdot \al^{+})\opmult(\al^{-}\cdot \al^{-}) = -3g\kwedge g$, $(\al^{+}\cdot \al^{+})\opmult(\be^{-}\cdot \be^{-}) = -3g\kwedge g$,
$(\al^{+}\cdot \al^{+})\opmult(\al^{-}\cdot \be^{-})=0$, $(\al^{+}\cdot \al^{+})\opmult(\be^{-}\cdot \ga^{-}) =0$, $(\al^{+}\cdot \be^{+})\opmult(\al^{-}\cdot \be^{-}) =0$, and $(\al^{+}\cdot \be^{+})\opmult(\al^{-}\cdot \ga^{-}) =0$.
Together these imply $36\sS^{4}(\al^{+})\opmult \sS^{4}(\al^{-})= (g\kwedge g -\al^{+}\cdot \al^{+})\opmult(g\kwedge g -\al^{-}\cdot \al^{-}) = 0$, $36\sS^{4}(\al^{+})\opmult \sS^{4}(\be^{-})= (g\kwedge g - \al^{+}\cdot \al^{+} )\opmult(g\kwedge g -\be^{-}\cdot \be^{-}) = 0$, $6\sS^{4}(\al^{+})\opmult \sC^{-}(x, y, z, w) = - (\al^{+}\cdot \al^{+})\opmult(\al^{-}\cdot \be^{-}) = 0$, and $6\sS^{4}(\al^{+})\opmult \sC^{-}( y, z, x, w) = - (\al^{+}\cdot \al^{+})\opmult(\be^{-}\cdot \ga^{-}) = 0$. 
In particular, $\sS^{+}(x, y, z, w) \opmult \sS^{-}(y, z, x, w) =0$.
It follows that 
\begin{align}
\balg^{\pm} = \spn\{\sS^{\pm}(x, y, z, w), \sS^{\pm}(y, z, x, w), \sC^{\pm}(x, y, z, w), \sC^{\pm}(y, z, x, w), \sC^{\pm}(z, x, y, w)\}
\end{align}
are orthogonal $5$-dimensional subalgebras of $(\mcurvweyl(\std), \opmult, h)$ that satisfy $\balg^{+}\opmult \balg^{-} = \{0\}$ and each of which is as in Lemma \ref{4dsubalgebralemma}. The two nonisomorphic structures of a $\cliff_{2}$-module on $\ste$ are represented by the hyper-Kähler structures determined by the pairs $(\al^{\pm}, \be^{\pm}) \in \oc(\ste, h)^{2}$.
 
\begin{example}\label{maxdegenexample}
Let $(\ste, h)$ be a Euclidean vector space of dimension $n > 3$. 
Let $x, y \in \std$ be orthogonal unit norm vectors. Since $\al = x\tensor x + y \tensor y$ satisfies $\al \circ \al = \al$ and $\tr \al = 2$, by Lemma \ref{kwedgeidempotentcorollary}, $\sB(\al)$ defined by \eqref{sbdefined} satisfies $|\sB(\al)|^{2} = \tfrac{4(n-2)^{2}}{(n-1)(n-3)}$. Let $\hat{\al} = h - \al$ and write $\hat{\al} = \sum_{\al = 1}^{n-2}z^{(\al)}\tensor z^{(\al)}$ where $\{z^{(1)}, \dots, z^{(n-2)}\}$ is an orthonormal basis of the orthocomplement of $\spn\{x, y\}$. (Note that $n > 3$ is assumed because $\hat{\al} \kwedge \hat{\al} = 0$ if $n = 3$.)
By \eqref{tfalkwedgebe},
\begin{align}\label{maxdegen}
\begin{split}
\sB(\al) & = \tfrac{n-2}{n-3}\tf(\al \kwedge \al)  = \tfrac{n-2}{n-1}\al \kwedge \al - \tfrac{2}{n-1}\al \kwedge \hat{\al} + \tfrac{2}{(n-1)(n-3)}\hat{\al}\kwedge \hat{\al} \\
& = \tfrac{n-2}{n-1}(x\dwedge y)\tensor (x\dwedge y) + \tfrac{2}{(n-1)(n-3)}\sum_{1 \leq \al < \be \leq n-2}(z^{(\al)}\dwedge z^{(\be)})\tensor (z^{(\al)}\dwedge z^{(\be)})\\
&\quad  - \tfrac{1}{n-1}\sum_{\al = 1}^{n-2}\left( (x \dwedge z^{(\al)})\tensor (x \dwedge z^{(\al)}) + (y \dwedge z^{(\al)})\tensor (y \dwedge z^{(\al)}) \right)\\&
  = \tfrac{3}{(n-1)(n-3)}\sum_{1 \leq \al < \be \leq n-2}\sB(x, y, z^{(\al)}, z^{(\be)}).
%\tf(&\be \kwedge \be)  = \tfrac{n-3}{n-1}\be \kwedge \be - \tfrac{2(n-3)}{(n-1)(n-2)}\be \kwedge \hat{\be} + \tfrac{2}{(n-1)(n-2)}\hat{\be}\kwedge \hat{\be} \\
%& = \tfrac{n-3}{n-1}(x\dwedge y)\tensor (x\dwedge y) + \tfrac{2}{(n-1)(n-2)}\sum_{1 \leq \al < \be \leq n-2}(z^{(\al)}\dwedge z^{(\be)})\tensor (z^{(\al)}\dwedge z^{(\be)})\\
%&\quad  - \tfrac{n-3}{(n-1)(n-2)}\sum_{\al = 1}^{n-2}\left( (x \dwedge z^{(\al)})\tensor (x \dwedge z^{(\al)}) + (y \dwedge z^{(\al)})\tensor (y \dwedge z^{(\al)}) \right).
\end{split}
\end{align}
By Corollary \ref{kwedgeidempotentcorollary}, if $n > 3$, $\sB(\al)$ is a nontrivial idempotent in $(\mcurvweyl(\std), \opmult)$. 

Let $(\ste, h)$ be a metric vector space of dimension $n \geq 4$. In accord with \cite[Definition $1.2$]{Doubrov-The}, a nonzero $\sX \in \mcurvweyl(\std)$ is \emph{maximally degenerate} if the stabilizer $O(h)_{[\sX]}$ in $O(h)$  of $[\sX] \in \proj(\mcurvweyl(\std))$ has maximal dimension among subgroups of $O(h)$ stabilizing a point in $\proj(\mcurvweyl(\std))$. Equivalently, the $O(h)$-orbit of $\sX$ has the minimal dimension possible among nontrivial $O(h)$-orbits in $\mcurvweyl(\std)$. By \cite[Equation $(3.2)$ and Appendix B]{Doubrov-The}, when $h$ is Euclidean, if $n = 5$ or $n \geq 7$ the tensor $\sB(\al)$ of \eqref{maxdegen} is maximally degenerate with $\dim O(n)_{[\sB(\al)]} = \binom{n-2}{2} + 1$ and $O(n)$-orbit of dimension $2n-4$. Moreover, by \cite[Theorem $1.3$]{Doubrov-The}, if $n \geq 7$, any maximally degenerate element of $\mcurvweyl(\std)$ is in the $O(n)$-orbit of a nonzero multiple of $\sB(\al)$ and any element of $\proj(\mcurvweyl(\std))$ stabilized by $\dim O(n)_{[\sB(\al)]}$ equals $[\sB(\al)]$ (this gives an alternative proof that some multiple of $\sB(\al)$ is idempotent, for $[\sB(\al)\opmult \sB(\al)]$ is stabilized by $\dim O(n)_{[\sB(\al)]}$, so must equal $[\sB(\al)]$). The cases $n \in \{4, 5, 6\}$ require special treatment; if $n = 5$ there are maximally degenerate tensors not orthogonally equivalent to a multiple of $\sB(\al)$, while if $n \in \{4, 6\}$, $\sB(\al)$ is not maximally degenerate. In these cases the maximally degenerate tensors can be built from Kähler-Weyl tensors; see \cite{Doubrov-The}.

If both inertial indices of $h$ are least $2$, there are $x$ and $y$ spanning a $2$-dimensional $h$-null subspace of $\std$, and it follows from \eqref{alalsquared} that $(x\dwedge y)\tensor (x \dwedge y)$ is a nontrivial square-zero element in $(\mcurvweyl(\std), \opmult)$. By \cite{Doubrov-The}, any maximally degenerate Weyl tensor is in the $O(h)$ orbit of a nonzero multiple of this tensor. Its stabilizer in $O(h)$ has dimension $\binom{n-2}{2} + 4$.

If $h$ has negative inertial index one, for $\sZ^{(\al)}$ as in the proof of Lemma \ref{indefiniteszlemma}, $\sZ^{A} = \sum_{\al \in A}\sZ^{(\al)}$ is square-zero for any nonempty subset $A \subset \{1, \dots, n-3\}$. The elements $\sZ^{A}$ and $\sZ^{B}$ are orthogonally equivalent if and only if $A$ and $B$ have the same cardinality, and by \cite[Theorem $4.2$]{Doubrov-The}, $\sZ^{\{1, \dots, n-3\}}$ is maximally degenerate having stabilizer of dimension $\binom{n-2}{2} + 2$.
\end{example}

\begin{example}\label{matsuoexample}
This example shows that, when $\dim \ste \geq 6$, $(\mcurvweyl(\std), \opmult)$ contains subalgebras isomorphic to $3$-dimensional Matsuo algebras. 
%This example is included here because it suggests that the idempotents in $(\mcurvweyl(\std), \opmult)$ enjoy a rich combinatorial structure.

Straightforward computations using \eqref{alwedgebegawedgesinorm}, \eqref{symalbecmgasi}, and \eqref{sabdefined} show that if $\al(1), \al(2), \al(3) \in \idem(S^{2}\std, \jmult)$ are pairwise orthogonal idempotents of ranks $a_{1}$, $a_{2}$, and $a_{3}$, each at least $2$, and $\sB_{ij} = \sB(\al(i), \al(j))$ for $1 \leq i < j \leq 3$, there hold
\begin{align}\label{ranktworelation}
\begin{split}
2&(a_{1} + a_{2}-1)(a + a_{3} - 1)\sB_{12}\opmult \sB_{13} \\
&= (a_{3}(a_{1}+a_{2}-1)\sB_{12} + a_{2}(a_{1} + a_{3} - 1)\sB_{13} - a_{1}(a_{2}+a_{3}-1)\sB_{23},\\
\lb&\sB_{12}, \sB_{13}\ra = \tfrac{2a_{1}a_{2}a_{3}}{(a_{1} + a_{2} - 1)(a_{1} + a_{3} - 1)(a_{1} - 1)}.
\end{split}
\end{align}
Note that there must hold $n \geq a + a_{2} + a_{3} \geq 6$. 

Suppose that $a_{1} = a_{2} = a_{3} = 2$. Then \eqref{ranktworelation} becomes $\sB_{12}\opmult \sB_{13} = \tfrac{1}{3}(\sB_{12} + \sB_{13} - \sB_{23})$. Using this relation it is straightforward to check that the elements $e_{0} = \tfrac{3}{5}(\sB_{12} + \sB_{13} + \sB_{23})$, $e_{1} = \tfrac{3}{5}(\sB_{12} + \sB_{13} -\tfrac{2}{3} \sB_{23})$, $e_{2} = \tfrac{3}{5}(\sB_{12} -\tfrac{2}{3} \sB_{13} + \sB_{23})$, and $e_{3} = \tfrac{3}{5}(-\tfrac{2}{3}\sB_{12} + \sB_{13} + \sB_{23})$ are idempotents satisfying $3(e_{1} + e_{2} + e_{2}) = 4e_{0}$ and the relations
\begin{align}\label{matsuorelations}
\begin{aligned}
&e_{0}\opmult e_{i} = e_{i},& &e_{i}\opmult e_{j} = \tfrac{1}{6}(e_{i} + e_{j}- e_{i\join j}), & &1\leq i \neq j \leq 3,&
\end{aligned}
\end{align}
where $i \join j$ is the unique element of $\{1, 2, 3\}$ distinct from $i$ and $j$. From \eqref{ranktworelation} it follows that $\lb\sB_{ij}, \sB_{ik}\ra = 16/9$ and so, for $1 \leq i \neq j \leq 3$, $|e_{0}|^{2} = 48/5$, $|e_{i}|^{2} = 64/15 = \lb e_{0}, e_{i}\ra$, and $\lb e_{i}, e_{j}\ra = 32/45$. These norm calculations together with the relations \eqref{matsuorelations} show that the subalgebra $\spn\{e_{1}, e_{2}, e_{3}\} \subset (\mcurvweyl(\std), \opmult)$ is the $3$-dimensional algebra based on the Fischer space with one line $\{1, 2, 3\}$ and having parameters $\ga = 512/15$ and $\delta = 2/3$ defined by Matsuo in \cite[Sections $3.2$ and $3.3$]{Matsuo-3transpositionarxiv};\footnote{The arXiv version \cite{Matsuo-3transpositionarxiv} differs substantially from the published version \cite{Matsuo-3transposition}; see also \cite[Sections $2$ and $5$]{Matsuo}).} this algebra is denoted $M(\{1, 2, 3\}, \tfrac{1}{6}, \rea)$ in the notations of \cite{Hall-Rehren-Shpectorov-primitive}.
\end{example}

\section*{Acknowledgements}
I thank Vladimir Tkachev for his interest in this work, for helpful comments on the first version, and for giving me access to related work in preparation. I thank Mamuka Jibladze for comments (unrelated to this paper) that brought to my attention \cite{Elashvili} and \cite{Popov}.

\bibliographystyle{amsplain}
%\bibliography{../master}

\begin{thebibliography}{10}

\bibitem{Besse}
A.~L. Besse, \emph{Einstein manifolds}, Ergebnisse der Mathematik und ihrer
  Grenzgebiete (3), vol.~10, Springer-Verlag, Berlin, 1987.

\bibitem{Bohm-Wilking}
C.~B\"ohm and B.~Wilking, \emph{Manifolds with positive curvature operators are
  space forms}, Ann. of Math. (2) \textbf{167} (2008), no.~3, 1079--1097.

\bibitem{Bordemann}
M.~Bordemann, \emph{Nondegenerate invariant bilinear forms on nonassociative
  algebras}, Acta Math. Univ. Comenian. (N.S.) \textbf{66} (1997), no.~2,
  151--201.

\bibitem{Brendle-book}
S.~Brendle, \emph{Ricci flow and the sphere theorem}, Graduate Studies in
  Mathematics, vol. 111, American Mathematical Society, Providence, RI, 2010.

\bibitem{Brendle-surgeryisotropic}
\bysame, \emph{Ricci flow with surgery on manifolds with positive isotropic
  curvature}, Ann. of Math. (2) \textbf{190} (2019), no.~2, 465--559.

\bibitem{Dixmier-algebres}
J.~Dixmier, \emph{Certaines alg\`ebres non associatives simples d\'efinies par
  la transvection des formes binaires}, J. Reine Angew. Math. \textbf{346}
  (1984), 110--128.

\bibitem{Doubrov-The}
B.~Doubrov and D.~The, \emph{Maximally degenerate {W}eyl tensors in
  {R}iemannian and {L}orentzian signatures}, Differential Geom. Appl.
  \textbf{34} (2014), 25--44.

\bibitem{Elashvili}
A.~G. \`Elashvili, \emph{Invariant algebras}, Lie groups, their discrete
  subgroups, and invariant theory, Adv. Soviet Math., vol.~8, Amer. Math. Soc.,
  Providence, RI, 1992, pp.~57--64.

\bibitem{Faraut-Koranyi}
J.~Faraut and A.~Kor{\'a}nyi, \emph{Analysis on symmetric cones}, Oxford
  Mathematical Monographs, The Clarendon Press Oxford University Press, New
  York, 1994, Oxford Science Publications.

\bibitem{Fox-simplicial}
D.~J.~F. Fox, \emph{Commutative algebras with nondegenerate invariant trace
  form and trace-free multiplication endomorphisms},
  \href{https://arxiv.org/abs/2004.12343}{arXiv:2004.12343}.

\bibitem{Fox-curvtensorkahler}
\bysame, \emph{The commutative nonassociative algebra of {K}ähler curvature
  tensors}, In preparation.

\bibitem{Griess-Monster}
R.~L. Griess, Jr., \emph{The {M}onster and its nonassociative algebra}, Finite
  groups---coming of age ({M}ontreal, {Q}ue., 1982), Contemp. Math., vol.~45,
  Amer. Math. Soc., Providence, RI, 1985, pp.~121--157.

\bibitem{Griess-gnavoai}
\bysame, \emph{G{NAVOA}. {I}. {S}tudies in groups, nonassociative algebras and
  vertex operator algebras}, Vertex operator algebras in mathematics and
  physics ({T}oronto, {ON}, 2000), Fields Inst. Commun., vol.~39, Amer. Math.
  Soc., Providence, RI, 2003, pp.~71--88.

\bibitem{Hall-transpositionalgebras}
J.~I. Hall, \emph{Transposition algebras}, Bull. Inst. Math. Acad. Sin. (N.S.)
  \textbf{14} (2019), no.~2, 155--187.

\bibitem{Hall-Rehren-Shpectorov-primitive}
J.~I. Hall, F.~Rehren, and S.~Shpectorov, \emph{Primitive axial algebras of
  {J}ordan type}, J. Algebra \textbf{437} (2015), 79--115.

\bibitem{Hall-Rehren-Shpectorov}
\bysame, \emph{Universal axial algebras and a theorem of {S}akuma}, J. Algebra
  \textbf{421} (2015), 394--424.

\bibitem{Hamilton}
R.~S. Hamilton, \emph{Three-manifolds with positive {R}icci curvature}, J.
  Differential Geom. \textbf{17} (1982), no.~2, 255--306.

\bibitem{Hamilton-four}
\bysame, \emph{Four-manifolds with positive curvature operator}, J.
  Differential Geom. \textbf{24} (1986), no.~2, 153--179.

\bibitem{Hamilton-formation}
\bysame, \emph{The formation of singularities in the {R}icci flow}, Surveys in
  differential geometry, {V}ol.\ {II} ({C}ambridge, {MA}, 1993), Int. Press,
  Cambridge, MA, 1995, pp.~7--136.

\bibitem{Andrews-Hopper}
C.~Hopper and B.~Andrews, \emph{The {R}icci flow in {R}iemannian geometry},
  Lecture Notes in Mathematics, vol. 2011, Springer, Heidelberg, 2011, A
  complete proof of the differentiable 1/4-pinching sphere theorem.

\bibitem{Huisken}
G.~Huisken, \emph{Ricci deformation of the metric on a {R}iemannian manifold},
  J. Differential Geom. \textbf{21} (1985), no.~1, 47--62.

\bibitem{Ivanov}
A.~A. Ivanov, \emph{The {M}onster group and {M}ajorana involutions}, Cambridge
  Tracts in Mathematics, vol. 176, Cambridge University Press, Cambridge, 2009.

\bibitem{Jacobson-jordan}
N.~Jacobson, \emph{Structure and representations of {J}ordan algebras},
  American Mathematical Society Colloquium Publications, Vol. XXXIX, American
  Mathematical Society, Providence, R.I., 1968.

\bibitem{Koecher}
M.~Koecher, \emph{The {M}innesota notes on {J}ordan algebras and their
  applications}, Lecture Notes in Mathematics, vol. 1710, Springer-Verlag,
  Berlin, 1999.

\bibitem{Lovelock}
D.~Lovelock, \emph{Dimensionally dependent identities}, Proc. Cambridge Philos.
  Soc. \textbf{68} (1970), 345--350.

\bibitem{Matsuo-3transpositionarxiv}
A.~Matsuo, \emph{3-transposition groups of symplectic type and vertex operator
  algebras}, \href{https://arxiv.org/abs/math/0311400v1}{arXiv:math/0311400v1}.

\bibitem{Matsuo}
\bysame, \emph{Norton's trace formulae for the {G}riess algebra of a vertex
  operator algebra with larger symmetry}, Comm. Math. Phys. \textbf{224}
  (2001), no.~3, 565--591.

\bibitem{Matsuo-3transposition}
\bysame, \emph{3-transposition groups of symplectic type and vertex operator
  algebras}, J. Math. Soc. Japan \textbf{57} (2005), no.~3, 639--649.

\bibitem{Nadirashvili-Tkachev-Vladuts}
N.~Nadirashvili, V.~Tkachev, and S.~Vl\u{a}du\c{t}, \emph{Nonlinear elliptic
  equations and nonassociative algebras}, Mathematical Surveys and Monographs,
  vol. 200, American Mathematical Society, Providence, RI, 2014.

\bibitem{Penrose-Rindler}
R.~Penrose and W.~Rindler, \emph{Spinors and space-time: {V}ol. 1, {T}wo-spinor
  calculus and relativistic fields}, Cambridge Monographs on Mathematical
  Physics, Cambridge University Press, Cambridge, 1984.

\bibitem{Popov}
V.~L. Popov, \emph{An analogue of {M}. {A}rtin's conjecture on invariants for
  nonassociative algebras}, Lie groups and {L}ie algebras: {E}. {B}. {D}ynkin's
  {S}eminar, Amer. Math. Soc. Transl. Ser. 2, vol. 169, Amer. Math. Soc.,
  Providence, RI, 1995, pp.~121--143.

\bibitem{Richard-curvaturecones}
T.~Richard, \emph{{Curvature cones and the Ricci flow}}, {Actes de S\'eminaire
  de Th\'eorie Spectrale et G\'eom\'etrie. Ann\'ee 2012--2014}, St. Martin
  d'H\`eres: Universit\'e de Grenoble I, Institut Fourier, 2014, pp.~197--220.

\bibitem{Richard-lowerbounds}
\bysame, \emph{Lower bounds on {R}icci flow invariant curvatures and geometric
  applications}, J. Reine Angew. Math. \textbf{703} (2015), 27--41.

\bibitem{Richard-Seshadri}
T.~Richard and H.~Seshadri, \emph{Non-coercive {R}icci flow invariant curvature
  cones}, Proc. Amer. Math. Soc. \textbf{143} (2015), no.~6, 2661--2674.

\bibitem{Ryba}
A.~J.~E. Ryba, \emph{A natural invariant algebra for the {H}arada-{N}orton
  group}, Math. Proc. Cambridge Philos. Soc. \textbf{119} (1996), no.~4,
  597--614.

\bibitem{Tkachev-laplace}
V.~G. Tkachev, \emph{On the non-vanishing property for real analytic solutions
  of the {$p$}-{L}aplace equation}, Proc. Amer. Math. Soc. \textbf{144} (2016),
  no.~6, 2375--2382.

\bibitem{Tkachev-correction}
\bysame, \emph{A correction of the decomposability result in a paper by
  {M}eyer-{N}eutsch}, J. Algebra \textbf{504} (2018), 432--439.

\bibitem{Tkachev-universality}
\bysame, \emph{The universality of one half in commutative nonassociative
  algebras with identities}, J. Algebra \textbf{569} (2021), 466--510.

\bibitem{Tricerri-Vanhecke}
F.~Tricerri and L.~Vanhecke, \emph{Curvature tensors on almost {H}ermitian
  manifolds}, Trans. Amer. Math. Soc. \textbf{267} (1981), no.~2, 365--397.

\bibitem{Weyl}
H.~Weyl, \emph{The classical groups. {T}heir invariants and representations},
  Princeton University Press, Princeton, N.J., 1939.

\bibitem{Wilking}
B.~Wilking, \emph{A {L}ie algebraic approach to {R}icci flow invariant
  curvature conditions and {H}arnack inequalities}, J. Reine Angew. Math.
  \textbf{679} (2013), 223--247.

\end{thebibliography}
\def\polhk#1{\setbox0=\hbox{#1}{\ooalign{\hidewidth
  \lower1.5ex\hbox{`}\hidewidth\crcr\unhbox0}}} \def\cprime{$'$}
  \def\cprime{$'$} \def\cprime{$'$}
  \def\polhk#1{\setbox0=\hbox{#1}{\ooalign{\hidewidth
  \lower1.5ex\hbox{`}\hidewidth\crcr\unhbox0}}} \def\cprime{$'$}
  \def\cprime{$'$} \def\cprime{$'$} \def\cprime{$'$} \def\cprime{$'$}
  \def\polhk#1{\setbox0=\hbox{#1}{\ooalign{\hidewidth
  \lower1.5ex\hbox{`}\hidewidth\crcr\unhbox0}}} \def\cprime{$'$}
  \def\Dbar{\leavevmode\lower.6ex\hbox to 0pt{\hskip-.23ex \accent"16\hss}D}
  \def\cprime{$'$} \def\cprime{$'$} \def\cprime{$'$} \def\cprime{$'$}
  \def\cprime{$'$} \def\cprime{$'$} \def\cprime{$'$} \def\cprime{$'$}
  \def\cprime{$'$} \def\cprime{$'$} \def\cprime{$'$} \def\dbar{\leavevmode\hbox
  to 0pt{\hskip.2ex \accent"16\hss}d} \def\cprime{$'$} \def\cprime{$'$}
  \def\cprime{$'$} \def\cprime{$'$} \def\cprime{$'$} \def\cprime{$'$}
  \def\cprime{$'$} \def\cprime{$'$} \def\cprime{$'$} \def\cprime{$'$}
  \def\cprime{$'$} \def\cprime{$'$} \def\cprime{$'$} \def\cprime{$'$}
  \def\cprime{$'$} \def\cprime{$'$} \def\cprime{$'$} \def\cprime{$'$}
  \def\cprime{$'$} \def\cprime{$'$} \def\cprime{$'$} \def\cprime{$'$}
  \def\cprime{$'$} \def\cprime{$'$} \def\cprime{$'$} \def\cprime{$'$}
  \def\cprime{$'$} \def\cprime{$'$} \def\cprime{$'$} \def\cprime{$'$}
  \def\cprime{$'$} \def\cprime{$'$} \def\cprime{$'$}
\providecommand{\bysame}{\leavevmode\hbox to3em{\hrulefill}\thinspace}
\providecommand{\MR}{\relax\ifhmode\unskip\space\fi MR }
% \MRhref is called by the amsart/book/proc definition of \MR.
\providecommand{\MRhref}[2]{%
  \href{http://www.ams.org/mathscinet-getitem?mr=#1}{#2}
}
\providecommand{\href}[2]{#2}

\end{document}